\newcommand{\numberset}{\mathbb}
\newcommand{\N}{\numberset{N}}
\newcommand{\R}{\numberset{R}}
\newcommand{\Pk}{\numberset{P}}
\renewcommand{\epsilon}{\varepsilon}
\renewcommand{\theta}{\vartheta}
\renewcommand{\rho}{\varrho}
\renewcommand{\phi}{\varphi}
\newcommand{\xx}{\boldsymbol{x}}
\newcommand{\nn}{\boldsymbol{n}}
\newcommand{\bb}{\boldsymbol{\beta}}
\newcommand{\dd}{{\rm div}}
\newcommand{\gr}{\nabla}
\newcommand{\vek}{\mathcal{V}^{E,k}}
\newcommand{\eek}{\mathcal{E}^{E,k}}
\newcommand{\pek}{\mathcal{P}^{E,k}}
\newcommand{\vk}{\mathcal{V}^{k}}
\newcommand{\ek}{\mathcal{E}^{k}}
\newcommand{\pk}{\mathcal{P}^{k}}
\def\tri{{| \! | \! |}}
\def\PN{{\Pi^{\nabla, E}_{k}}}
\def\P0{{\Pi^{0, E}_{k}}}
\def\Pg{{\Pi^{0}_{k}}}
\def\PZ0P{{\boldsymbol{\Pi}^{0, E}_{k-1}}}
\def\PP0P{{\boldsymbol{\Pi}^{0, E}_{k}}}
\def\Stab{\mathcal{S}^E}
\def\AcipE{\mathcal{A}_{\rm cip}^E}
\def\Acip{\mathcal{A}_{\rm cip}}
\def\AtcipE{\widetilde{\mathcal{A}}_{\rm cip}^E}
\def\Atcip{\widetilde{\mathcal{A}}_{\rm cip}}
\def\vint{v_{\mathcal{I}}}
\def\uint{u_{\mathcal{I}}}
\def\eint{e_{\mathcal{I}}}
\def\epi{e_{\pi}}
\def\bskew{{b^{\rm skew}}}
\def\bskewE{{b^{{\rm skew},E}}}
\def\bskewh{{b^{{\rm skew}}_h}}
\def\bskewEh{{b^{{\rm skew},E}_h}}
\def\bbO{{\|\bb\|_{[L^{\infty}(\Omega)]^2}}}
\def\cip{{\rm cip}}
\def\cipdual{{\rm cip^*}}
\def\errF{\eta_{\mathcal{F}}^E}
\def\erra{\eta_{a}^E}
\def\pwh{  h\pi ( \bb_h \cdot \nabla \Pg v_h )}
\def\wh{  h \bb_h \cdot \nabla \Pg v_h}
\def\errb{\eta_{b}^E}
\def\errc{\eta_{c}^E}
\def\errJ{\eta_{J}^E}
\def\errN{\eta_{N}^E}
\def\errbA{\eta_{b,A}^E}
\def\errbB{\eta_{b,B}^E}
\def\eNa{\eta_{N,a}^E}
\def\eNb{\eta_{N,b}^E}
\def\eNc{\eta_{N,c}^E}
\def\reg{s}
\lbrace\begin{array}{@{}l@{}}}%
\theoremstyle{definition}
\theoremstyle{remark}
\newtheorem{remark}{Remark}[section]
\theoremstyle{remark}
\theoremstyle{plain}
\newtheorem{proposition}{Proposition}[section]
\newtheorem{lemma}{Lemma}[section]
\newcommand{\Manuel}[1]{\noindent{\color{blue}\textbf{[M:~#1]}}}
\author[1,2]{L. Beir\~ao da Veiga \thanks{lourenco.beirao@unimib.it}}
\author[3]{C. Lovadina \thanks{carlo.lovadina@unimi.it}}
\author[4]{M. Trezzi \thanks{manuelluigi.trezzi01@universitadipavia.it}}
\affil[1]{Dipartimento di Matematica e Applicazioni,
Universit\`a degli Studi di Milano Bicocca,
Via Roberto Cozzi 55 - 20125 Milano, Italy}  
\affil[2]{IMATI-CNR, Via Adolfo Ferrata 5 - 27100 Pavia, Italy}
\affil[3]{Dipartimento di Matematica ``F. Enriques'',
Universit\`a degli Studi di Milano,
Via Cesare Saldini 50 - 20133 Milano, Italy}
\affil[4]{Dipartimento di Matematica ``F. Casorati'',
Universit\`a di Pavia,
Via Adolfo Ferrata 5 - 27100 Pavia, Italy}
\title{\textbf{CIP-stabilized Virtual Elements for diffusion-convection-reaction problems}}
\date{\today}
\begin{document}

\maketitle

\begin{abstract}
The Virtual Element Method for diffusion-convection-reaction problems is considered. In order to design a quasi-robust scheme also in the convection-dominated regime, a Continuous Interior Penalty approach is employed. Due to the presence of polynomial projection operators, typical of the Virtual Element Method, the stability and the error analysis requires particular care, especially in treating the advective term. Some numerical tests are presented to support the theoretical results.  
\end{abstract}

\section{Introduction}
\label{sec:intro}

The Virtual Element Method (VEM) is a fairly recent methodology for the discretization of problems in partial differential equations \cite{volley, hitchhikers}, which can be interpreted as a generalization of classical Finite Elements (FEM) to meshes of much more general shape. Since its birth, the VEM has enjoyed a large success and been applied to a very wide range of problems; we here limit ourselves in mentioning the recent special issue \cite{sema-simai} and the review paper \cite{ACTA-VEM}.

The focus of the present article is on the classical diffusion-reaction-advection scalar problem. 
Under suitable assumptions on the data, this is a standard ``textbook'' elliptic problem without any particular difficulty. On the other hand, it is well known that, whenever the advective term dominates (in particular over the diffusive one) a classical FEM approach will lead to very large errors and oscillations in the discrete solution, unless an extremely fine mesh is adopted. There is a large FEM literature on the subject, offering a list of possible stabilized methods which are robust in this respect. From the theoretical standpoint, a method is typically called quasi-robust if, assuming sufficiently regular solution and data, it yields error estimates which are uniform with respect to the diffusion parameter in a norm including also some direct control on the convective term. Some well known approaches are upwind Discontinuous Galerkin schemes \cite{DE-book,R-book,BrezziDG}, Streamline Upwind Petrov-Galerkin and variants \cite{brooks1982}, Continuous Interior Penalty (CIP) \cite{douglas, burman:2004, burman:2007}, Local Projection Stabilization \cite{LPS0,matthies2007unified}. Finally, one must note that the diffusion-reaction-advection problem serves also as a model for more complex problems in fluid mechanics, such as the Navier-Stokes equation.

The Virtual Element Method is particularly suitable in the context of advection dominated problems due to the flexibility of the mesh construction and its handling. For instance, VEM allows more local refinement procedures and easy an gluing of fine meshes with coarser ones (this latter feature is very useful in the presence of layers, for example). In addition, VEM offers a more efficient discretization of complex domains, which is greatly useful in applications such as reservoir \cite{andersen17} and fracture-network simulations \cite{gen_Benedetto2014a}, where diffusion-reaction-advection equations play a crucial role. Unfortunately, due to the presence of projection operators which may alter the structure of the convective term, it is not easy to devise and analyze quasi-robust VEM schemes. Exceptions are the SUPG and LPS approaches detailed in \cite{berrone:2016,BDLV:2021} and \cite{li2021}, respectively (regarding other polygonal technologies, see for instance \cite{HHO-book-1,HHO-book-2}). 


Since three of the most popular stabilization techniques, namely SUPG, LPS and CIP, have their own strongly defined set of assets/drawbacks, broadening the available approaches with CIP schemes is important for the VEM technology. 
The purpose of the present contribution is exactly to fill this gap and develop CIP (Continuous Interior Penalty) stabilized VEM method, providing also a theoretical error analysis. Of course, our method combines VEM stabilization terms (to deal with polygonal meshes) and CIP-like terms (to deal with the avdection-dominated regime). 
Furthermore, it is worth noticing that the backstage complex nature of CIP, which is a ``minimal stabilization'' as it adds the minimal positive term guaranteeing control on piecewise polynomial convection, makes the analysis in the VEM setting particularly interesting and challenging. Assuming, as it happens in most publications on the subject, a uniformly positive reaction term, we are able to develop quasi-robust error estimates for our method. In the absence of reaction, we are able to show some improved error estimates (over a non-stabilized scheme), but only under a piecewise polynomial convection data assumption. The paper ends with a set of numerical tests showing the actual robustness of the method and comparing it with the non-stabilized approach.

The paper is organized as follows. After presenting the continuous and discrete problems in Section \ref{sub:VEM}, we develop the stability and convergence analysis in Section \ref{sec:theory}. Finally, numerical tests are shown in Section \ref{sec:num}.

Throughout the paper, we use standard notations for Sobolev norms and semi-norms. Moreover, $C$ and $C_i$ will denote quantities, independent of the meshsize $h$, which may vary at each occurrence. We will make extensive use of the notation $a \lesssim b$ ($a$ and $b$ being non-negative quantities) to mean $a\leq C b$.   

\section{The continuous and the discrete problems}
\label{sub:VEM}

In this Section we deal with the continuous problem and its discretization by means of the Virtual Element Method.

\subsection{Continuous Problem}
\label{sec:cp}


We consider the following steady advection-diffusion-reaction problem: 
\begin{equation}\label{eq:problem-s}
\left\{
\begin{aligned} 
&\mbox{find $u:\Omega \to \R$ such that:}\\
 &- \epsilon \Delta u + \bb \cdot \nabla u + \sigma u 
 = f \qquad \text{in }\Omega , \\
 &u_{|\Gamma} = 0 ,
\end{aligned}
\right.
\end{equation}
where $\Omega \subset \R^2$ is a polygonal domain of boundary $\Gamma$. Above, $\epsilon > 0$ is the diffusion coefficient (assumed to be constant), while 
$\bb \in [W^{1,\infty}(\Omega)]^2$ is the advection field such that $\dd \bb = 0$. Moreover, 
$\sigma>0$ is the reaction constant (except for Section \ref{ss:p1beta}, where $\sigma=0$); we remark that we assume $\sigma$ to be a positive constant since the extension to the case $0<\sigma\in L^\infty(\Omega)$, with $\sigma^{-1}\in L^\infty(\Omega)$, is trivial.
Finally, $f \in L^2(\Omega)$ is the source term.

The domain boundary will be split into two non-overlapping regions:
\begin{equation*}
\Gamma_{\text{in}} \coloneqq \{ \xx \in \Gamma \, | \, (\bb(\xx) \cdot \nn) < 0 \}
\quad \text{and} \quad 
\Gamma_{\text{out}} \coloneqq \{ \xx \in \Gamma \, | \, (\bb(\xx) \cdot \nn) \geq 0 \} \, ,
\end{equation*}
where $\nn$ is the outward unit normal vector to the boundary. 

A variational formulation of problem \eqref{eq:problem-s} reads as follows:
\begin{equation}\label{eq:problem-c}
\left\{
\begin{aligned} 
&\text{find $u \in V \coloneqq H^1_0(\Omega)$ such that: } \\
&\epsilon \, a(u,v) + \bskew(u,v) + \sigma \, c(u,v)  = \int_\Omega f \, v \, {\rm d}\Omega \, .
\end{aligned}
\right.
\end{equation}
The bilinear forms
$a(\cdot,  \cdot) \colon V \times V \to \R$ , 
$\bskew(\cdot,  \cdot) \colon V \times V \to \R$
and
$c(\cdot,  \cdot) \colon V \times V \to \R$
are defined as
\begin{equation}
\label{eq:a-c}
a(u,  v) \coloneqq \int_{\Omega} \nabla u \cdot \nabla v \, {\rm d}\Omega
\qquad \text{for all $u, v \in V$,}
\end{equation}
\begin{equation}
\label{eq:b-c}
\bskew (u,v) \coloneqq \dfrac{1}{2}\bigl(b(u,  v) - b(v, u)\bigr) \quad\text{with}\quad b(u,  v) := \int_{\Omega} (\bb \cdot \nabla u) \, v \, {\rm d}\Omega 
\qquad \text{for all $u, v \in V$,}
\end{equation}
\begin{equation}
\label{eq:c-c}
c(u,  v) := \int_{\Omega}  u \, v \, {\rm d}\Omega
\qquad \text{for all $u, v \in V$.}
\end{equation} 
%

%
It is well known that when $\epsilon$ is small with respect to $\bb$ and/or to $\sigma$, standard discretizations of \eqref{eq:problem-c} typically return unsatisfactory numerical solutions with spurious oscillations. 
To overcome these difficulties, several strategies are available in the literature.

In this paper we take advantage of the so-called Continuous Interior Penalty (CIP) strategy, introduced in 
\cite{burman:2004} in a Finite Element framework.
From now on, we assume that the material parameters are scaled so that we have 
\begin{equation}
\label{eq:beta-scaling}
\bbO=1\, .
\end{equation}
%

%
%
%
%
%

\subsection{Preliminary notations and results}
\label{sub:proj}
We start considering a sequence $\set{\Omega_h}_h$ of tessellations of $\Omega$ into non-overlapping polygons $E$.  
We denote with $e$ a general edge of $E$, while 
$|E|$ and $h_E$ are the area and the diameter of $E$, respectively. 
Furthermore, $\nn^E$ is the unit outward normal vector to the boundary $\partial E$. 
As usual, $h \coloneqq \sup_{E\in\Omega_h}h_E$ denotes the mesh parameter.
We suppose that $\set{\Omega_h}_h$ fulfils the following assumption:\\
\textbf{(A1) Mesh assumption.}
There exists a positive constant $\rho$ such that for any $E \in \set{\Omega_h}_h$   
\begin{itemize}
\item $E$ is star-shaped with respect to a ball $B_E$ of radius $ \geq\, \rho \, h_E$;
\item any edge $e$ of $E$ has length  $ \geq\, \rho \, h_E$;
\item the mesh is quasi-uniform, any polygon has diameter $h_E \geq \rho h$.
\end{itemize} 

We now introduce some basic tools and notations useful in the construction and the theoretical analysis of Virtual Element Methods.

Using standard VEM notations, for $n \in \N$, $m\in \N$ and $p\in [1, +\infty]$, and for any $E \in \Omega_h$,  
let us introduce the spaces:
\begin{itemize}
\item $\Pk_n(E)$: the set of polynomials on $E$ of degree $\leq n$  (with $\Pk_{-1}(E)=\{ 0 \}$),
\item $\Pk_n(\Omega_h) := \{q \in L^2(\Omega) \quad \text{s.t} \quad q|_E \in  \Pk_n(E) \quad \text{for all $E \in \Omega_h$}\}$,
\item $W^m_p(\Omega_h) := \{v \in L^2(\Omega) \quad \text{s.t} \quad v|_E \in  W^m_p(E) \quad \text{for all $E \in \Omega_h$}\}$ equipped with the broken norm and seminorm
\[
\begin{aligned}
&\|v\|^p_{W^m_p(\Omega_h)} := \sum_{E \in \Omega_h} \|v\|^p_{W^m_p(E)}\,,
\qquad 
&|v|^p_{W^m_p(\Omega_h)} := \sum_{E \in \Omega_h} |v|^p_{W^m_p(E)}\,,
\qquad & \text{if $1 \leq p < \infty$,}
\\
&\|v\|_{W^m_p(\Omega_h)} := \max_{E \in \Omega_h} \|v\|_{W^m_p(E)}\,,
\qquad 
&|v|_{W^m_p(\Omega_h)} := \max_{E \in \Omega_h} |v|_{W^m_p(E)}\,,
\qquad & \text{if $p = \infty$,}
\end{aligned}
\]
\end{itemize}
and the following polynomial projections:
\begin{itemize}
\item the $\boldsymbol{L^2}$\textbf{-projection} $\Pi_n^{0, E} \colon L^2(E) \to \Pk_n(E)$, given by
\begin{equation*}
\label{eq:P0_k^E}
\int_Eq_n (v - \, {\Pi}_{n}^{0, E}  v) \, {\rm d} E = 0 \qquad  \text{for all $v \in L^2(E)$  and $q_n \in \Pk_n(E)$,} 
\end{equation*} 
with obvious extension for vector functions $\boldsymbol{\Pi}^{0, E}_{n} \colon [L^2(E)]^2 \to [\Pk_n(E)]^2$;

\item the $\boldsymbol{H^1}$\textbf{-seminorm projection} ${\Pi}_{n}^{\nabla,E} \colon H^1(E) \to \Pk_n(E)$, defined by 
\begin{equation*}
\label{eq:Pn_k^E}
\left\{
\begin{aligned}
& \int_E \gr  \,q_n \cdot \gr ( v - \, {\Pi}_{n}^{\nabla,E}   v)\, {\rm d} E = 0 \quad  \text{for all $v \in H^1(E)$ and  $q_n \in \Pk_n(E)$,} \\
& \int_{\partial E}(v - \,  {\Pi}_{n}^{\nabla, E}  v) \, {\rm d}s= 0 \, ,
\end{aligned}
\right.
\end{equation*}
\end{itemize}
with global counterparts 
$\Pi_n^{0} \colon L^2(\Omega) \to \Pk_n(\Omega_h)$ and
${\Pi}_{n}^{\nabla} \colon H^1(\Omega_h) \to \Pk_n(\Omega_h)$
defined by
\begin{equation*}
\label{eq:proj-global}
(\Pi_n^{0} v)|_E = \Pi_n^{0,E} v \,,
\qquad
(\Pi_n^{\nabla} v)|_E = \Pi_n^{\nabla,E} v \,,
\qquad
\text{for all $E \in \Omega_h$.}
\end{equation*}

We finally mention one classical result for polynomials on star-shaped domains (see for instance \cite{brenner-scott:book}).

\begin{lemma}[Polynomial approximation]
\label{lm:bramble}
Under the assumption \textbf{(A1)}, for any $E \in \Omega_h$ and for any  smooth enough function $\phi$ defined on $E$, it holds 
\[
\begin{aligned}
&\|\phi - \Pi^{0,E}_n \phi\|_{W^m_p(E)} \lesssim h_E^{s-m} |\phi|_{W^s_p(E)} 
\qquad & \text{$s,m \in \N$, $m \leq s \leq n+1$, $p=1, \dots, \infty$,}
\\
&\|\phi - \Pi^{\nabla,E}_n \phi\|_{m,E} \lesssim h_E^{s-m} |\phi|_{s,E} 
\qquad & \text{$s,m \in \N$, $m \leq s \leq n+1$, $s \geq 1$,}
\\
&\|\nabla \phi - \boldsymbol{\Pi}^{0,E}_{n} \nabla \phi\|_{m,E} \lesssim h_E^{s-1-m} |\phi|_{s,E} 
\qquad & \text{$s,m \in \N$, $m+1 \leq s \leq n+1$, $s \geq 1$.}
\end{aligned}
\]
\end{lemma}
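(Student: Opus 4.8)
The statement to prove is the classical polynomial approximation lemma (Lemma \ref{lm:bramble}), a standard Bramble–Hilbert type result on star-shaped domains. Let me sketch a proof plan.

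\medskip

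\noindent\textbf{Proof plan.}
The plan is to invoke the classical Bramble--Hilbert / Dupont--Scott theory of polynomial approximation on domains star-shaped with respect to a ball, as developed in \cite{brenner-scott:book}, and then to track the scaling in $h_E$ explicitly. First I would recall the key fact: if $E$ is star-shaped with respect to a ball $B_E$ whose radius is comparable to $h_E$ (which is exactly guaranteed by assumption \textbf{(A1)}), then there exists a polynomial $Q^s\phi \in \Pk_{s-1}(E)$, the averaged Taylor polynomial of degree $s-1$, such that
\[
\|\phi - Q^s\phi\|_{W^m_p(E)} \lesssim h_E^{s-m}\,|\phi|_{W^s_p(E)}, \qquad 0 \le m \le s,
\]
with the hidden constant depending only on $s$, $m$, $p$ and the chunkiness parameter $\rho$. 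The standard way to obtain this is via a scaling argument: map $E$ to a reference-sized domain $\widehat E$ of unit diameter (still star-shaped with respect to a ball of radius $\gtrsim \rho$ by \textbf{(A1)}), apply the Bramble--Hilbert lemma on $\widehat E$ — whose constant is uniform over the family of such domains because of a compactness/Deny--Lions argument — and scale back, picking up the factor $h_E^{s-m}$ from the chain rule applied to derivatives of order $m$ versus $s$.

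\medskip

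\noindent\textbf{From the abstract estimate to the three claims.}
Once the averaged Taylor estimate is in hand, the three inequalities follow by standard arguments. For the first one, I would use the stability of the $L^2$-projection: since $\Pi^{0,E}_n$ is an $L^2(E)$-orthogonal projection onto $\Pk_n(E)$ and $Q^{n+1}\phi \in \Pk_n(E)$, one writes $\phi - \Pi^{0,E}_n\phi = (\phi - Q^{n+1}\phi) - \Pi^{0,E}_n(\phi - Q^{n+1}\phi)$, and then bounds the second term in $W^m_p(E)$ using an inverse inequality on $\Pk_n(E)$ (valid because of the shape regularity in \textbf{(A1)}) together with the $L^p$-stability of $\Pi^{0,E}_n$; the $h_E$ powers from the inverse inequality combine correctly to give again $h_E^{s-m}$. (For $p=2$ and $m=0$ no inverse inequality is even needed.) The second inequality, for $\Pi^{\nabla,E}_n$, is proved identically after noting that $\Pi^{\nabla,E}_n$ reproduces polynomials of degree $\le n$ and is bounded in $H^1$-seminorm, the boundary-average normalization fixing the constant; here one uses a Poincaré-type inequality on $E$ (with constant controlled by $\rho$) to pass from seminorm to norm control. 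The third inequality is obtained by applying the first one (the $L^2$-projection estimate, vector-valued version $\boldsymbol{\Pi}^{0,E}_n$) to the function $\nabla\phi$ in place of $\phi$, with $s$ replaced by $s-1$: this immediately yields $\|\nabla\phi - \boldsymbol{\Pi}^{0,E}_n\nabla\phi\|_{m,E} \lesssim h_E^{(s-1)-m}|\nabla\phi|_{s-1,E} = h_E^{s-1-m}|\phi|_{s,E}$, under the constraint $m \le s-1 \le n$, i.e.\ $m+1 \le s \le n+1$, which is precisely the range stated.

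\medskip

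\noindent\textbf{Main obstacle.}
I expect the only genuine subtlety — and the reason the result needs assumption \textbf{(A1)} rather than mere star-shapedness — is ensuring that all hidden constants are uniform over the whole mesh family, independent of $h$ and of the particular polygon $E$. This requires that the Bramble--Hilbert constant on the rescaled domain $\widehat E$ be bounded uniformly; this follows because the class of domains of unit diameter that are star-shaped with respect to a ball of radius $\ge \rho$ is, in the relevant sense, a ``uniform'' family (one can cover it by finitely many cases, or argue by a scaling-invariant form of the Deny--Lions lemma), and similarly the inverse inequalities and the Poincaré constants on such $\widehat E$ are uniformly bounded. All of this is classical and is carried out in detail in \cite{brenner-scott:book}; there is no new difficulty here, so the proof reduces to a careful citation plus the elementary scaling bookkeeping sketched above.
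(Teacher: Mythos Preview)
Your proposal is correct and follows the standard Bramble--Hilbert/Dupont--Scott route (averaged Taylor polynomial, scaling, stability and inverse inequalities under \textbf{(A1)}). Note, however, that the paper does not actually prove this lemma: it is stated as a classical result and simply referred to \cite{brenner-scott:book}, so there is no ``paper's own proof'' to compare against --- your sketch is precisely the argument that reference contains.
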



\subsection{Virtual Element spaces}
\label{sub:spaces}

Given a polygon $E$ and a positive integer $k$, we define the local ``enhanced'' virtual element space as
\begin{equation}
\label{eq:vem space}
\begin{aligned}
V_h(E) = 
\bigl\{
v_h \in H^1(E) \cap & C^0(\partial E) \quad \text{s.t.} \quad 
 v_h|_e  \in \Pk_k(e) \quad \text{for all $e \in \partial E$,} 
\bigr .
\\
\bigl .
& \Delta v_h \in \Pk_k(E) \,, \quad
(v - \PN v, \, \widehat{p}_k ) = 0 \quad 
\text{for all $\widehat{p}_k \in \Pk_k(E) / \Pk_{k-2}(E)$}
\bigr \} \,.
\end{aligned}
\end{equation}
For the finite dimensional space $V_h(E)$, one can check that the following linear operators are a set of DoFs:
\begin{itemize}
\item $\vek$: the pointwise values of $v_h$ at the vertexes of the polygon $E$,
\item $\eek$: the values of $v_h$ at $k-1$ internal points of a Gauss-Lobatto quadrature for every edge $e \in \partial E$,
\item $\pek$: the moments $\dfrac{1}{| E |} \int_E v_h \, m_{\alpha\beta} \, {\rm d} E\, $, $\forall m_{\alpha\beta} \in \mathcal{M}_{k-2}(E)$ where $\mathcal{M}_{k-2}(E)$ is the set of monomials defined as
\begin{equation}
\label{eq:internal-dofs}
\mathcal{M}_{k-2} \coloneqq \left\{
m_{\alpha\beta} \coloneqq \left( \dfrac{x - x_E}{h_E} \right)^\alpha
\left( \dfrac{y - y_E}{h_E} \right)^\beta
 \ \alpha,\beta\in\mathbb N\, , \alpha + \beta \leq k - 2
\right\}.
\end{equation}
\end{itemize}
Thanks to these DoFs, it is possible to compute the following projections:
\[
\PN \colon V_h(E) \to \Pk_k(E), \qquad
\P0 \colon V_h(E) \to \Pk_{k}(E), \qquad
\PP0P \colon \nabla V_h(E) \to [\Pk_{k}(E)]^2 \,.
\]
Gluing together the local spaces, we define the global virtual element space as
\begin{equation*}
\label{eq:vem global space}
V_h(\Omega_h) = \{v_h \in V \quad \text{s.t.} \quad v_h|_E \in V_h(E) \quad \text{for all $E \in \Omega_h$} \} \, ,
\end{equation*}
with the associated set of degrees of freedom:
\begin{itemize}
    \item $\vk$: the values of $v_h$ at the vertices;
    \item $\ek$: the values of $v_h$ at $k-1$ points on each edge $e$;
    \item $\pk$: the moments up to order $k-2$ for each element $E\in\Omega_h$.
\end{itemize}

We finally recall from \cite{cangiani:2017,brenner-sung:2018} the optimal approximation property for the space $V_h(\Omega_h)$.

\begin{lemma}[Approximation using virtual element functions]
\label{lm:interpolation}
Under the assumption \textbf{(A1)} for any $v \in V \cap H^{s+1}(\Omega_h)$ there exists $\vint \in V_h(\Omega_h)$ such that for all $E \in \Omega_h$ it holds
\[
\|v - \vint\|_{0,E} + h_E \|\nabla (v - \vint)\|_{0,E} \lesssim h_E^{s+1} |v|_{s+1,E} \, , 
\]
where $0 < s \le k$.
\end{lemma}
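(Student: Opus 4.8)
The plan is to build the interpolant $\vint$ polygon by polygon from the degrees of freedom of the local spaces $V_h(E)$, and then to prove the stated bound on each $E\in\Omega_h$ separately. For every $E\in\Omega_h$, since $s>0$ gives $H^{s+1}(E)\hookrightarrow C^0(\overline E)$ and the trace $v|_{\partial E}$ is continuous, the functionals in $\vek$, $\eek$ and $\pek$ are well defined on $v|_E$; let $\vint|_E\in V_h(E)$ be the unique virtual function whose values of these functionals coincide with those of $v|_E$. Since the vertex and edge values determine $\vint|_e\in\Pk_k(e)$ for each edge $e$ and are shared by the adjacent polygons, $\vint$ is continuous across $\Omega_h$, and it vanishes on $\Gamma$ because $v\in V$; hence $\vint\in V_h(\Omega_h)$.

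For the local estimate, fix $E$ and set $v_\pi:=\P0 v\in\Pk_k(E)\subset V_h(E)$ (the inclusion holds because every polynomial of degree $\le k$ satisfies the defining conditions in \eqref{eq:vem space}), so that $\delta_h:=v_\pi-\vint\in V_h(E)$. The polynomial part is controlled directly by Lemma~\ref{lm:bramble} (first bound, with $p=2$, degree $k$, and regularity exponent $s+1$): $\|v-v_\pi\|_{0,E}+h_E|v-v_\pi|_{1,E}\lesssim h_E^{s+1}|v|_{s+1,E}$. For $\delta_h$, the key ingredient is the norm equivalence for virtual functions in terms of their degrees of freedom: under \textbf{(A1)}, every $w_h\in V_h(E)$ satisfies $\|w_h\|_{0,E}+h_E|w_h|_{1,E}\lesssim h_E\bigl(\sum_i|\mathrm{dof}_i(w_h)|^2\bigr)^{1/2}$, the sum ranging over $\vek\cup\eek\cup\pek$. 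By construction $\mathrm{dof}_i(\delta_h)=\mathrm{dof}_i(v_\pi-v)$, and each such value is $\lesssim h_E^{s}|v|_{s+1,E}$: the point values of $v-v_\pi$ by a scaled trace/Sobolev inequality on $E$ combined with Lemma~\ref{lm:bramble}, and the interior moments by Cauchy--Schwarz, $|E|\simeq h_E^2$ and Lemma~\ref{lm:bramble}. Since the number of degrees of freedom is uniformly bounded by \textbf{(A1)}, we obtain $\|\delta_h\|_{0,E}+h_E|\delta_h|_{1,E}\lesssim h_E^{s+1}|v|_{s+1,E}$, and the triangle inequality concludes the proof.

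The main obstacle is the norm-equivalence estimate for $w_h\in V_h(E)$, since $w_h$ is not available in closed form. The standard route is to split $w_h$ into the part with polynomial Laplacian --- which, through integration by parts and the enhancement constraint in \eqref{eq:vem space}, is controlled by the interior moments and the boundary data --- and its discrete-harmonic part --- which is controlled by the edge polynomials, hence by the point-value degrees of freedom via inverse estimates along $\partial E$ --- while carefully tracking the powers of $h_E$ through scaled trace, Poincar\'e and polynomial inverse inequalities, all depending on the shape-regularity in \textbf{(A1)}. This is precisely the analysis carried out in \cite{cangiani:2017,brenner-sung:2018}; once it is in hand, the remaining steps are a routine combination with the Bramble--Hilbert bounds of Lemma~\ref{lm:bramble}.
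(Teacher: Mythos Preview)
The paper does not actually prove this lemma: it simply states it as a result ``recalled from \cite{cangiani:2017,brenner-sung:2018}'' with no argument given. Your sketch is correct and is precisely the standard route taken in those references --- define $\vint$ via the degrees of freedom, split through a polynomial $v_\pi\in\Pk_k(E)\subset V_h(E)$, and control $\vint-v_\pi$ by the DoF norm equivalence combined with Bramble--Hilbert --- so there is nothing to compare.
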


\subsection{Virtual Element Forms and the Discrete Problem}
\label{sub:forms}
We start observing that the bilinear forms 
$a(\cdot,\cdot)$ , 
$\bskew(\cdot,\cdot)$
and 
$c(\cdot,\cdot)$, see \eqref{eq:a-c}, \eqref{eq:b-c} and \eqref{eq:c-c}, 
can be obviously decomposed into local contributions
\begin{equation}
\label{eq:local-forms}
a(u,  v) \eqqcolon \sum_{E \in \Omega_h} a^E(u,  v) \, ,
\quad
\bskew(u,  v) \eqqcolon \sum_{E \in \Omega_h} \bskewE(u,  v) \, , 
\quad
c(u,  v) \eqqcolon \sum_{E \in \Omega_h} c^E(u,  v) \, .
\end{equation}
 
Using the DoFs introduced in Section \ref{sub:spaces}, we construct a computable counterpart of the above-mentioned forms, following the standard VEM procedure.

Hence, we define the bilinear form $a_h^E(\cdot,  \cdot) \colon V_h(E) \times V_h(E) \to \R$ as follows:
 \begin{equation*} \label{eq:aEh}
 \begin{split}
a_h^E(u_h,  v_h) &:=
\int_E \PZ0P \nabla u_h \cdot \PZ0P \nabla v_h \, {\rm d}E + 
\Stab\bigl((I - \PN) u_h, \, (I - \PN) v_h\bigr) \, .
\end{split}
\end{equation*}
%
%
Above, the stabilizing bilinear form $\Stab \colon V_h(E) \times V_h(E) \to \R$ is required to be computable and to satisfy
\begin{equation}
\label{eq:sEh}
\alpha_*|v_h|_{1,E}^2 \leq \Stab(v_h, v_h) \leq \alpha^* |v_h|_{1,E}^2 \, ,
\qquad \text{for all $v_h \in {\rm Ker}(\PN)$} \, ,
\end{equation}
for two positive uniform constants $\alpha_*$ and $\alpha^*$. 
In what follows, we choose the \texttt{dofi-dofi} stabilization (cf. \cite{volley, BLR:2017}, for instance), which is a common choice for VEM approach. 
Following \cite{BDLV:2021}, we replace the bilinear form $b^E(\cdot, \cdot) \colon V_h(E) \times V_h(E)  \to \R$ 
with $b_h^E(\cdot, \cdot)$, defined as
\begin{equation*}
\label{eq:bfb}
b_h^E(u_h, v_h) \coloneqq \int_E \bb \cdot \nabla \P0 u_h  \, \P0 v_h \, {\rm d}E + 
\int_{\partial E} (\bb \cdot \nn^E) (I - \P0) u_h  \, v_h \, {\rm d}s \, .
\end{equation*}
In the numerical scheme, we will employ the skew-symmetrized form (cf. \eqref{eq:b-c}):

\begin{equation*}
    \bskewEh(u_h , v_h)   = \frac{1}{2}\left( b_h^E(u_h, v_h) - b_h^E(v_h, u_h) \right) \, .
\end{equation*}
The reaction term is locally replaced by $c_h(\cdot , \cdot) \colon V_h(E) \times V_h(E)  \to \R$, defined as
\begin{equation*}
\label{eq:cE}
c_h^E(u_h, v_h) \coloneqq \int_E \P0 u_h  \, \P0 v_h \, {\rm d}E + 
|E| \, \Stab\bigl((I - \P0) u_h, \, (I - \P0) v_h\bigr) \, .
\end{equation*}
%
Following \cite{burman:2004, burman:2007}, we now introduce a VEM version of the local CIP-stabilization form,  
defined as
\begin{equation}
\label{eq:JhE}
J_h^E(u_h,v_h)
\coloneqq 
 \sum_{e \subset \partial E}  \dfrac{\gamma}{2} \int_e  \! \, h_e^2 \, [\nabla \Pg u_h] \cdot [\nabla \Pg v_h] \, {\rm d}s 
+
\gamma \, h_E \, \Stab\bigl((I - \PN) u_h, (I - \PN) v_h\bigr) \, ,
\end{equation}
where $[\nabla \cdot]$ denotes the gradient jump across $e$. If $e$ is a boundary edge we set $[\nabla \cdot ] = 0$. The parameter $\gamma$ is defined as
\[
\gamma (\partial E) \coloneqq \| \bb \cdot \nn^e \|_{L^\infty(\partial E)} \, ,
\]
where $\nn^e$ is one of the two outward normal vectors to $e$. Since we will work with the assumption $\| \bb \|_{[L^\infty(\Omega)]^2} = 1$, we will treat $\gamma$ as a constant. 

Moreover, we impose the Dirichlet boundary conditions by using a Nitsche-type technique. To this aim, we define the local forms:
\begin{equation*}\label{eq:NE}
\mathcal{N}_h^E(u_h,v_h) \coloneqq
- \epsilon  \langle \nabla \PN u_h \cdot \nn^E, v_h \rangle_{\Gamma_E} 
 + \dfrac{\epsilon}{\delta h_E} \left \langle u_h,v_h \right\rangle_{\Gamma_E}    
+ \dfrac{1}{2} \langle \vert \bb \cdot \nn \vert u_h, v_h \rangle_{\Gamma_E} \, ,
\end{equation*}
where $\Gamma_E = \partial E \cap \Gamma$, $\delta$ is a positive parameter to be chosen and $\langle \cdot , \cdot \rangle$ is the $L^2(\Gamma_E)$-scalar product.

\begin{remark}
The standard definition of Nitsche's method also considers a term
\[
- \epsilon \langle u_h, \nabla \PN v_h \cdot \nn^E\rangle_{\Gamma_E} \,  .
\]
Since we are not interested in achieving symmetry and in order to simplify the analysis of the method, we drop this term. Another difference with the standard formulation of Nitsche's method is the convective term. Usually, it is locally defined as
\[
- \langle (\bb \cdot \nn^E) u_h, v_h \rangle_{\Gamma_{\text{in}}\cap{\Gamma_E} } \, .
\]
By integration by parts, in the definition of 
$\bskew(\cdot,\cdot)$, we should consider also
\[
\dfrac{1}{2}\langle (\bb \cdot \nn^E) u_h,v_h \rangle_{\Gamma_E} \, .
\]
Summing the last two terms, we recover our definition of $\mathcal{N}_h^E(\cdot, \cdot)$.\qed
\end{remark}

Summing all of these contributions, we construct the discrete bilinear form $\Acip^E \colon V_h(E) \times V_h(E)  \to \R$ as
\begin{equation}
\label{eq:AcipE}
\Acip^E(u_h, v_h) = \epsilon a_h^E(u_h , v_h) + \bskewEh(u_h , v_h) + \sigma c_h^E(u_h , v_h) + \mathcal{N}_h^E(u_h,v_h) + J_h^E(u_h , v_h) \, ,
\end{equation}
and summing over all the polygons we obtain the global versions of the bilinear forms
\[
\begin{aligned}
&a_h(u_h, v_h) := \sum_{E \in \Omega_h} a_h^E(u_h, v_h)\,,
\qquad 
&\bskewh(u_h, v_h) := \sum_{E \in \Omega_h} \bskewEh(u_h, v_h)\,,
\\
&c_h(u_h, v_h) := \sum_{E \in \Omega_h} c_h^E(u_h, v_h)\,,
\qquad 
&J_h(u_h, v_h) := \sum_{E \in \Omega_h} J_h^E(u_h, v_h)\,,
\end{aligned}
\]
\[
\mathcal{N}_h(u_h, v_h) := \sum_{E \in \Omega_h} \mathcal{N}_h^E(u_h, v_h)\,,
\]
and
\begin{equation}
\label{eq:Acip}
\Acip (u_h, v_h) \coloneqq \sum_{E \in \Omega_h} \AcipE (u_h, v_h) \, .
\end{equation}
The discrete local and global load terms (here $\mathcal{F}_h^E \colon V_h(E) \to \R$) are 
\begin{equation}
\label{eq:FE}
\mathcal{F}_h^E(v_h) \coloneqq \int_E f \, \P0 v_h \, ,
\qquad 
\mathcal{F}_h(v_h) := \sum_{E \in \Omega_h} \mathcal{F}^E_h(v_h)  \,.
\end{equation}
Finally, the discrete problem reads as:
\begin{equation}
\label{eq:cip-vem}
\left \{
\begin{aligned}
& \text{find $u_h \in V_h(\Omega_h)$ s.t.} 
\\
& \Acip(u_h, \, v_h) = \mathcal{F}_h(v_h) \qquad \text{for all $v_h \in V_h(\Omega_h)$.}
\end{aligned}
\right.
\end{equation}

\subsection{Consistency of the method}
\label{ss:consistency}

Due to the polynomial projections entering in \eqref{eq:cip-vem}, it is easily seen that, as usual for the VEMs, the solution $u$ of the continuous problem \eqref{eq:problem-c} does not solve the discrete scheme \eqref{eq:cip-vem} (thus, strong consistency does not hold). However, if $u$ is more regular, say $u\in H^2(\Omega)\cap H^1_0(\Omega)$, then it holds:

\begin{equation}
\label{eq:consistency-1}
\Atcip(u , \, v_h) = \tilde{\mathcal{F}}(v_h) \qquad \text{for all $v_h \in V_h(\Omega_h)$} \, .
\end{equation}
where
\begin{equation*}
\label{eq:A-c}
\Atcip (u,v_h) 
\coloneqq
\sum_{E \in \Omega_h} \AtcipE (u, v_h) \, , \quad \tilde{\mathcal{F}}(v_h) \coloneqq\sum_{E \in \Omega_h} \tilde{\mathcal{F}}^E(v_h) \, ,
\end{equation*}
and the local forms are defined as follows.
\begin{flalign}
\label{eq:AE-c}
\bullet\qquad
\AtcipE(u, v_h) 
\coloneqq 
\epsilon \, a^E(u, v_h) 
+ \bskewE(u, v_h) 
+ \sigma \, c^E(u, v_h) 
+\tilde{\mathcal{N}}_h^E (u, v_h)
+ \tilde J_h^E(u, v_h) \, , &&
\end{flalign} 
with 
\begin{equation*}\label{eq:NE-c}
\tilde{\mathcal{N}}_h^E(u,v_h) \coloneqq
- \epsilon  \langle \nabla u \cdot \nn^E, v_h \rangle_{\Gamma_E}
+ \dfrac{\epsilon}{\delta h_E} \left \langle u,v_h \right\rangle_{\Gamma_E}    
+ \dfrac{1}{2} \langle \vert \bb \cdot \nn \vert u, v_h \rangle_{\Gamma_E} \, ,
\end{equation*}
where $\Gamma_E = \partial E \cap \Gamma$, and  
\[
\tilde J_h^E(u, v_h) 
\coloneqq 
\dfrac{1}{2} \sum_{e \subset \partial E} 
\gamma \int_e  \, h_e^2 \, [\nabla u] \cdot [\nabla v_h] \, {\rm d}s
= 
\dfrac{1}{2} \sum_{e \subset \partial E} 
\gamma \int_e \, h_e^2 \, [\nabla u \cdot \nn^e] [\nabla v_h \cdot \nn^e] \, {\rm d}s \, ;
\]

\begin{flalign}
\label{eq:FE-c}
\bullet\qquad
\tilde{\mathcal{F}}^E(v_h) \coloneqq \int_E f  \, v_h \, . &&
\end{flalign}

\section{Stability and convergence analysis}
\label{sec:theory}

\subsection{Preliminary results}
\label{sec:preliminary}

Before proving the stability of the discrete problem, we mention some preliminary results that are useful for our purposes. The first one is a standard inverse estimate for the virtual element functions.
\begin{lemma}[Inverse estimate]
\label{lm:inverse}
Under the assumption \textbf{(A1)}, for any $E \in \Omega_h$, there exists a uniform positive constant such that  
\[
| v_h |_{1,E} \lesssim h_E^{-1} \| v_h \|_{0,E} 
\quad \text{for all $v_h \in V_h(\Omega_h)$  .}
\]
\end{lemma}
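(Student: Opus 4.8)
The plan is to reduce everything to a scale-invariant statement on a reference polygon and then exploit finite dimensionality. Since $|v_h|_{1,E}$ and $\|v_h\|_{0,E}$ only involve the restriction $v_h|_E\in V_h(E)$, it suffices to argue element by element. Fix $E\in\Omega_h$ and let $F_E(\hat\xx)=h_E\hat\xx+\xx_E$ be the similarity mapping a polygon $\hat E$ of unit diameter onto $E$; setting $\hat v:=v_h\circ F_E$, the defining conditions in \eqref{eq:vem space} are invariant under $F_E$, so $\hat v\in V_h(\hat E)$, and a direct change of variables gives $|v_h|_{1,E}^2=|\hat v|_{1,\hat E}^2$ and $\|v_h\|_{0,E}^2=h_E^2\|\hat v\|_{0,\hat E}^2$. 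Hence the asserted bound is equivalent to the scale-free inequality
\[
|\hat v|_{1,\hat E}\le C\,\|\hat v\|_{0,\hat E}\qquad\text{for all }\hat v\in V_h(\hat E),
\]
and the task becomes to produce such a constant $C$ that does not depend on which admissible $\hat E$ we picked.

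On a fixed $\hat E$ this is immediate: $V_h(\hat E)$ is a finite dimensional subspace of $H^1(\hat E)$, the map $\hat v\mapsto\|\hat v\|_{0,\hat E}$ is a norm on it (if $\|\hat v\|_{0,\hat E}=0$ then $\hat v=0$ a.e., hence $\hat v\equiv 0$), and $\hat v\mapsto|\hat v|_{1,\hat E}$ is a continuous seminorm; by the equivalence of (semi)norms on finite dimensional spaces we get $|\hat v|_{1,\hat E}\le C(\hat E)\|\hat v\|_{0,\hat E}$.

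The only genuine issue, and the one where Assumption \textbf{(A1)} enters, is the uniformity of $C(\hat E)$ over the class of admissible reference polygons. One first observes that, being star-shaped with respect to a ball of radius $\geq\rho$, $\hat E$ has perimeter bounded solely in terms of $\rho$; combined with the lower bound $\geq\rho$ on the edge lengths, this caps the number of edges and vertices of $\hat E$, hence (with $k$ fixed) $\dim V_h(\hat E)$, uniformly. A standard compactness argument over this class of shapes — entirely analogous to the one behind the norm equivalence \eqref{eq:sEh} for the \texttt{dofi-dofi} stabilization, see e.g. \cite{BLR:2017} — then yields $\sup_{\hat E}C(\hat E)<\infty$, with a value depending only on $\rho$ and $k$. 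I expect this last step to be the main technical point; the rest is soft. A more constructive route is available too: write $v_h=\P0 v_h+(I-\P0)v_h$, both terms in $V_h(E)$ since $\Pk_k(E)\subset V_h(E)$, treat $\P0 v_h$ with the classical polynomial inverse inequality, and integrate by parts in the remainder, where the bulk term $\int_E\Delta(I-\P0)v_h\,(I-\P0)v_h$ vanishes because $\Delta(I-\P0)v_h\in\Pk_k(E)$ while $(I-\P0)v_h$ is $L^2(E)$-orthogonal to $\Pk_k(E)$; however, the resulting boundary flux term is most cleanly controlled again by the scaling argument above, so this does not really shorten the proof.
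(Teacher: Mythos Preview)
The paper does not actually prove this lemma: it is stated as ``a standard inverse estimate for the virtual element functions'' and left at that, with the implicit pointer to the VEM literature (e.g.\ \cite{BLR:2017,brenner-sung:2018,cangiani:2017}). Your scaling-plus-compactness outline is precisely the standard route used in those references, and the steps you spell out (invariance of the enhanced space \eqref{eq:vem space} under similarities, the change-of-variables identities, and the uniform bound on the number of edges under \textbf{(A1)}) are all correct. The compactness step you honestly flag as ``the main technical point'' is indeed where the work lies; it is carried out carefully in the references just cited, so your deferral to them is appropriate.

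Your alternative constructive route is also sound and worth a brief remark: after killing the bulk term via orthogonality, the boundary flux $\int_{\partial E}\partial_n w\,w$ with $w=(I-\P0)v_h$ can be handled \emph{without} falling back on compactness, by combining a polynomial trace/inverse bound for $w|_{\partial E}$ (which is a piecewise polynomial of degree $k$ on $\partial E$) with the inverse trace inequality of Lemma~\ref{lm:invtrace}. This yields a fully constructive constant depending only on $\rho$ and $k$, which is how several of the cited papers proceed. Either way, your proposal is correct and matches what the paper takes for granted.
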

We also recall, see \cite{BLR:2017,brenner-guan-sung:2017}, the following  inverse trace inequality.
\begin{lemma} [Inverse trace inequality]
\label{lm:invtrace}
Under the assumption \textbf{(A1)}, for any $E \in \Omega_h$ and for every $v_h \in V_h(E)$ such that 
$\Pi^{0,E}_{k-2} v_h \equiv 0$, 
it holds that
\[
\| v_h \|_{0,E} 
\lesssim 
h_E^{1/2} | v_h |_{0, \partial E} \, .
\]
\end{lemma}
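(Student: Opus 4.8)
The plan is to reduce the inequality, via a scaling argument, to a bound on a reference polygon, where it follows from the unisolvence of the degrees of freedom and the equivalence of norms on finite-dimensional spaces.

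First I would record two structural consequences of the hypotheses. By definition of $V_h(E)$ together with $\Pi^{0,E}_{k-2}v_h\equiv0$, the function $v_h$ satisfies $\int_E v_h\,q\,\mathrm{d}E=0$ for all $q\in\Pk_{k-2}(E)$, while $\Delta v_h\in\Pk_k(E)$ and $v_h|_{\partial E}$ is a continuous, piecewise polynomial of degree $\le k$ on $\partial E$, with a number of pieces bounded in terms of $\rho$ by \textbf{(A1)}. Moreover, such a $v_h$ is completely determined by its boundary trace: a function in $V_h(E)$ with vanishing boundary trace and vanishing internal moments up to order $k-2$ has all its degrees of freedom equal to zero, hence vanishes.

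Next I would rescale. Let $\widehat E$ be the image of $E$ under an affine map $\xx\mapsto\widehat\xx$ making $\widehat E$ a polygon of unit diameter; it still satisfies \textbf{(A1)} with the same $\rho$. Set $\widehat v_h(\widehat\xx):=v_h(\xx)$. The defining properties of the enhanced space are scale invariant — $\Delta_{\widehat\xx}\widehat v_h\in\Pk_k(\widehat E)$, the boundary trace stays piecewise polynomial, and $\int_{\widehat E}\widehat v_h\,\widehat q\,\mathrm{d}\widehat E=0$ for all $\widehat q\in\Pk_{k-2}(\widehat E)$ — so $\widehat v_h$ belongs to the analogous space on $\widehat E$. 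A change of variables gives $\|v_h\|_{0,E}=h_E\|\widehat v_h\|_{0,\widehat E}$ and $|v_h|_{0,\partial E}=h_E^{1/2}|\widehat v_h|_{0,\partial\widehat E}$, so the asserted bound is equivalent to the scale-free estimate $\|\widehat v_h\|_{0,\widehat E}\lesssim|\widehat v_h|_{0,\partial\widehat E}$.

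On any fixed admissible $\widehat E$ the latter is clear: the linear map sending a continuous piecewise polynomial $g$ on $\partial\widehat E$ to the unique element of $V_h(\widehat E)$ with boundary trace $g$ and vanishing internal moments is an isomorphism of finite-dimensional spaces, $|\cdot|_{0,\partial\widehat E}$ is a norm on its domain and $\|\cdot\|_{0,\widehat E}$ a norm on its range, hence the two are equivalent. The step I expect to be the main obstacle in a fully rigorous write-up is the uniformity of the equivalence constant over the whole family of admissible $\widehat E$: by \textbf{(A1)} these unit-diameter polygons (edges of length $\ge\rho$, star-shaped with respect to a ball of radius $\ge\rho$, uniformly bounded number of edges) form a compact class, which renders the constant uniform. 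A self-contained, constructive alternative — essentially the argument of \cite{BLR:2017,brenner-guan-sung:2017} — would split $v_h=v_h^{\mathcal H}+v_h^0$ into its harmonic extension $v_h^{\mathcal H}$ and a bubble $v_h^0\in H^1_0(E)$ with $\Delta v_h^0=\Delta v_h\in\Pk_k(E)$, then combine the trace identity $\|w\|_{0,E}^2\lesssim h_E\|w\|_{0,\partial E}^2+h_E^2|w|_{1,E}^2$, a Friedrichs inequality on $H^1_0(E)$, Lemma \ref{lm:inverse}, polynomial inverse (trace) inequalities, and crucially the vanishing-moment condition — which annihilates the interior term $\int_E(\Delta\PN v_h)\,v_h\,\mathrm{d}E$ in the Green identity for $|\PN v_h|_{1,E}$, yielding $|\PN v_h|_{1,E}\lesssim h_E^{-1/2}\|v_h\|_{0,\partial E}$; there the delicate point is to bound the non-polynomial part $\|\Delta v_h\|_{0,E}$ (equivalently $|v_h-\PN v_h|_{1,E}$) by the boundary data, avoiding the circularity that the inverse inequality alone would produce.
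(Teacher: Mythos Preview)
The paper does not supply a proof of this lemma; it is simply recalled from \cite{BLR:2017,brenner-guan-sung:2017}. Your outline is sound: the scaling reduction is valid (the enhanced-space constraints and the vanishing-moment hypothesis are affine-invariant, as you note), the uniqueness of the lifting with prescribed trace and zero interior moments is exactly unisolvence of the degrees of freedom, and on a fixed unit polygon the desired bound is then a finite-dimensional norm equivalence. You correctly isolate uniformity across admissible shapes as the only substantive point; the compactness argument you invoke is the standard device here and can be made rigorous by parametrizing admissible unit-diameter $n$-gons (for each $n$ up to the bound imposed by \textbf{(A1)}) by vertex coordinates in a compact set, together with continuous dependence of the VEM lifting on the geometry. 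The constructive decomposition you sketch at the end --- harmonic extension plus bubble, combined with polynomial inverse/trace inequalities and the Green identity exploiting $\Pi^{0,E}_{k-2}v_h=0$ --- is essentially the route taken in the cited references; the step you flag as delicate, namely controlling the non-polynomial part (equivalently $\|\Delta v_h\|_{0,E}$) purely by the boundary trace without circularity, is precisely where those papers invest their effort.
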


We now construct a VEM version of the Oswald interpolation operator, see for instance \cite{burman:2004, burman:2007} for the FEM framework.
We consider a point $\nu$ associated to a DoF in $\ek$ or $\vk$ and 
we define $E_\nu \coloneqq \bigcup \{ E \in \Omega_h \quad \text{s.t} \quad \nu \in E\}$, i.e. the union of the set of all the elements that contain the point $\nu$. 
The quasi-interpolation operator $\pi$ for a sufficiently regular function $v$ is defined as
\begin{equation}
\label{eq:oswald-inter-1}
\pi v= \sum_{\nu \in \vk \cup \ek} \lambda_\nu (v) \phi_\nu + \sum_{E \in \Omega_h} \sum_{\alpha +\beta\le k-2} \mu_{\alpha\beta}^E(v) \varphi_{\alpha\beta}^E \, ,
\end{equation}
where $\{\varphi_\nu\}_{\nu \in \vk \cup \ek}$ are the canonical basis functions associated to the degree of freedom pointed at $\{\nu\}_{\nu \in \vk \cup \ek}$ and the coefficients $\{\lambda_\nu (v)\}$ are defined as
\begin{equation}
\label{eq:oswald-inter-2}
\lambda_\nu (v)\coloneqq  \dfrac{1}{| E_\nu|} \sum_{ E \subseteq E_\nu} v^E(\nu) \, |E| \, .
\end{equation}
Above, and from now on in this section, a superscript $E$ for a function denotes the restriction of that function to the element $E$. 
Similarly, above $\{ \varphi_{\alpha\beta}^E\}$ denote the basis functions associated to the degrees of freedom $\pek$, and $\{\mu_{\alpha\beta}^E(v)\}$ are the associated coefficients corresponding to $v$, defined as (cf. \eqref{eq:internal-dofs}):
\begin{equation}\label{eq:Oswald-interior}
\mu_{\alpha\beta}^E(v) = \dfrac{1}{| E |} \int_E v \, m_{\alpha\beta} \, {\rm d} E\, .
\end{equation}

We are ready to prove the following estimate concerning the interpolation error for piecewise polynomial functions. A FEM version of this result can be found in  \cite{burman:2004, burman:2007}.
\begin{proposition}
\label{prp:clmest}
Under assumption \textbf{(A1)}, for every $E \in \Omega_h$ it holds
\[
\| (I - \pi) p \|_{0,E} \lesssim h^{1/2} \sum_{e \in \mathcal{F}_E} \| [ p ] \|_{0,e} \qquad \text{for all $p \in \Pk_k(\Omega_h)$}\, ,
\]
where $\mathcal{F}_E \coloneqq \{ e \in \mathcal  E \quad \text{s.t} \quad e \cap \partial E \ne \emptyset \}$ is the set of the edges with at least one endpoint which is a vertex of $E$.
\end{proposition}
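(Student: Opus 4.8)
The plan is to compare $\pi p$ with $p$ element by element, exploiting that $\pi$ reproduces any function that is globally continuous and in the virtual element space, and then to control the ``jump'' that $\pi$ introduces at each degree of freedom by a sum of gradient jumps over nearby edges. The starting observation is that $p|_E$ is itself a polynomial, hence (after a harmless identification) lies in $V_h(E)$; so if $p$ were globally continuous we would have $\pi p = p$ on $E$. The discrepancy $(I-\pi)p$ on $E$ therefore depends only on the \emph{differences} between the values $p^E$ used in the averaging \eqref{eq:oswald-inter-2} and the value $p^E(\nu)$ itself, for the DoFs $\nu$ touching $E$; the interior moment DoFs in $\pek$ coincide with those of $p^E$ (see \eqref{eq:Oswald-interior}), so they contribute nothing.

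First I would write, on a fixed element $E$,
\[
(I-\pi)p \big|_E = \sum_{\nu \in (\vk \cup \ek) \cap \partial E} \bigl(p^E(\nu) - \lambda_\nu(p)\bigr)\, \varphi_\nu\big|_E ,
\]
using that $\sum_\nu \varphi_\nu + (\text{interior basis}) $ reproduces $p^E$ on $E$. Next I would bound $\|\varphi_\nu\|_{0,E}$ by a scaling argument: by \textbf{(A1)} and standard VEM estimates on the reference-sized element, $\|\varphi_\nu\|_{0,E} \lesssim h_E$ for a vertex/edge DoF (the canonical basis function is $O(1)$ in $L^\infty$ and $|E|^{1/2} \lesssim h_E$). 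Then, by the triangle inequality and the definition \eqref{eq:oswald-inter-2} of $\lambda_\nu$,
\[
\bigl| p^E(\nu) - \lambda_\nu(p) \bigr|
= \frac{1}{|E_\nu|}\Bigl| \sum_{E' \subseteq E_\nu} \bigl(p^E(\nu) - p^{E'}(\nu)\bigr)|E'| \Bigr|
\le \max_{E' \subseteq E_\nu} \bigl| p^E(\nu) - p^{E'}(\nu)\bigr| .
\]
So it remains to estimate $|p^E(\nu) - p^{E'}(\nu)|$ for two elements $E, E'$ both lying in the patch $E_\nu$, i.e. to ``walk'' from $E$ to $E'$ through a chain of elements sharing edges, telescoping the nodal value of $p$ across each shared edge; at each step the contribution is governed by the jump $[p]$ on that edge evaluated (or rather controlled) at $\nu$.

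The key inequality is thus: if $e$ is an edge of a polygon $E$ and $\nu$ is a vertex of $E$ (hence an endpoint of some edge, or a Gauss--Lobatto node on an edge), then the value $|[p](\nu)|$ — or more robustly, a bound on $|p^E(\nu)-p^{E'}(\nu)|$ — is controlled by $h_E^{-1/2}\|[p]\|_{0,e}$ for the relevant edge(s) $e$, using that $[p]|_e$ is a polynomial of degree $\le k$ on an edge of length $\gtrsim \rho h_E$, so the $L^\infty$-norm and the scaled $L^2$-norm on that edge are equivalent (an inverse/equivalence-of-norms estimate on a one-dimensional segment, uniform by \textbf{(A1)}). Since by \textbf{(A1)} the patch $E_\nu$ contains a uniformly bounded number of elements of comparable size, the chain connecting $E$ and $E'$ has bounded length, and each link is an edge $e \in \mathcal{F}_E$. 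Collecting: $\|(I-\pi)p\|_{0,E} \lesssim h_E \cdot \max_\nu |p^E(\nu)-\lambda_\nu(p)| \lesssim h_E \cdot \max_{e \in \mathcal{F}_E} h_E^{-1/2}\|[p]\|_{0,e} \lesssim h^{1/2}\sum_{e \in \mathcal{F}_E}\|[p]\|_{0,e}$, where the last step absorbs $h_E \simeq h$ (quasi-uniformity) and replaces the max by a sum.

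The main obstacle I expect is the bookkeeping in the ``telescoping across a chain of elements'' step: one must argue that any two elements in $E_\nu$ are connected by a short chain of edge-neighbors \emph{all of whose connecting edges belong to $\mathcal{F}_E$}, and that the number of such elements and the chain length are uniformly bounded — this is where assumption \textbf{(A1)} (star-shapedness, minimal edge length, quasi-uniformity) is essential and must be invoked carefully, since polygonal meshes can have many edges per element. The other delicate point is the precise scaling of $\|\varphi_\nu\|_{0,E}$ and of the $1$D norm equivalence for the edge jump; both are routine under \textbf{(A1)} but need the reference-element / affine-scaling argument spelled out. Everything else is triangle inequalities and counting.
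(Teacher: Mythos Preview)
Your proposal is correct and follows essentially the same route as the paper: observe that the interior-moment DoFs contribute nothing to $(I-\pi)p$, reduce $\|(I-\pi)p\|_{0,E}$ to $h_E\max_\nu |p^E(\nu)-\lambda_\nu(p)|$, telescope these nodal differences into jumps $[p]$ across the edges in $\mathcal{F}_E$, and finish with a 1D inverse estimate $\|[p]\|_{\infty,e}\lesssim h^{-1/2}\|[p]\|_{0,e}$. The only technical difference is in how the factor $h_E$ is obtained: the paper uses the inverse trace inequality (Lemma~\ref{lm:invtrace}), exploiting $\Pi^{0,E}_{k-2}\delta^E=0$, to pass to $\|\delta^E\|_{\infty,\partial E}$ where the canonical basis functions are explicit boundary polynomials with unit $L^\infty$-norm, whereas you invoke a direct interior scaling bound $\|\varphi_\nu\|_{0,E}\lesssim h_E$ --- valid, but requiring an $L^\infty(E)$ stability estimate for VEM basis functions that the paper's route sidesteps.
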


\begin{proof}
We introduce the difference
\[
\delta \coloneqq (I - \pi ) p \, .
\]
We restrict our attention to an element $E\in\Omega_h$, and consider $\delta^E$.
Since the DoFs in $\pek$ belong to one element only, we observe that for $\delta^E$ only the DoFs arising from $\vek$ and $\eek$ (i.e. the ones on the mesh skeleton), are involved. 
Hence, noting that $\delta^E \in V_h(E)$ and $\Pi^{0,E}_{k-2} \delta^E =0$ we can apply Lemma \ref{lm:invtrace}:
\begin{equation}
\label{eq:osw-1}
\| \delta^E \|_{0,E} \lesssim h^{1/2} \| \delta^E \|_{0,\partial E} \lesssim h \| \delta^E \|_{\infty,\partial E} \, .
\end{equation}
Since the basis function associated to $\vk$ and $\ek$ are scaled in a way that their $L^\infty-$norm is equal to 1, we have that
\begin{equation}
\label{eq:osw-2}
h \| \delta^E \|_{\infty,\partial E}
\lesssim h
\max_{\nu \in \eek \cup \vek}  | \delta^E (\nu) | \, .
\end{equation}
Exploiting the definition of the Oswald interpolant, we observe that if $\nu \in \eek$ is not on the boundary, we have that
\[
\begin{aligned}
\delta^E (\nu) =
p^E (\nu) - (\pi p)^E(\nu) &=
\dfrac{1}{| E \cup E' |}
\left(
| E \cup E' | \,p^E(\nu) - | E |\, p^E(\nu) - |E'|\, p^{E'}(\nu)
\right)\, \\
&=
c \, (p^E(\nu) - p^{E'}(\nu)) = c [p](\nu)\, , 
\end{aligned}
\]
where $E'$ is the second element that shares the node $\nu$. 
Thanks to the mesh assumptions {\bf (A1)}, all the values
\begin{equation*}
c 
=
\dfrac{| E \cup E' | -  | E |}{| E \cup E' |}  
=
\dfrac{| E' |}{| E \cup E' |} \approx \dfrac{1}{2} >0 \, .
\end{equation*}
are uniformly bounded from below and they do not depend on $h$; hence it holds
\begin{equation}
\label{eq:int-edge}
\max_{\nu \in \eek} | \delta^E (\nu) | \lesssim \max_{\nu \in \eek} | [p](\nu) | \, . 
\end{equation}
If $\nu \in \vek$, a similiar computation allows to bound $| \delta^E (\nu) |$ by means of the jumps of $p$ at the nodes on the edges containing $\nu$ (this set is denoted by $\mathcal{N}_\nu$ here below):
\begin{equation}
\label{eq:int-vertex}
 | \delta^E (\nu) | \lesssim \max_{\nu' \in \mathcal{N}_\nu} | [p](\nu') | \, . 
\end{equation}
Combining \eqref{eq:int-edge} and \eqref{eq:int-vertex}, we get
\begin{equation}
\label{eq:osw-3}
h\, \max_{\nu \in \eek \cup \vek}  | \delta^E(\nu) | \lesssim h\,  \max_{\nu\in e\, , e \in \mathcal{F}_E } | [p](\nu) | \lesssim h\, || [p] ||_{\infty,\mathcal{E}(E)}\, ,
\end{equation}
where $\mathcal{E}(E) \coloneqq \bigcup_{e\in \mathcal{F}_E }e$. Since an inverse estimate gives
\begin{equation}
\label{eq:osw-4}
h\, || [p] ||_{\infty,\mathcal{E}(E)} \lesssim h^{1/2} || [p] ||_{0,\mathcal{E}(E)} \lesssim 
h^{1/2} \sum_{e \in \mathcal{F}_E} \| [p] \|_{0,e}\, , 
\end{equation}
from \eqref{eq:osw-1}, \eqref{eq:osw-2}, \eqref{eq:osw-3} and \eqref{eq:osw-4} we obtain
\[
\| (I - \pi) p \|_{0,E}  = \| \delta^E \|_{0,E} \lesssim  h^{1/2} \sum_{e \in \mathcal{F}_E} \| [p] \|_{0,e}\, .
\]

\end{proof} 

\begin{lemma}
\label{lm:clmcont}
Under assumption \textbf{(A1)}, for every $E \in \Omega_h$ it holds
\[
\| \pi p \|_{0,E} \lesssim \| p \|_{0,\mathcal{D}(E)} \quad \text{for all $p \in \Pk_k(\Omega_h)$}\, ,
\]
where $\mathcal{D}(E) \coloneqq \bigcup \{ K \in \Omega_h \quad \text{s.t.}\quad \bar E \cap \bar K \ne \emptyset \}.$
\end{lemma}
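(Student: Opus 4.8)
The plan is to observe that $(\pi p)|_E$ is the element of $V_h(E)$ whose vertex and edge degrees of freedom are the averaged nodal values $\lambda_\nu(p)$ and whose interior moments agree with those of $p^E$ (the moment degrees of freedom $\pek$ being local to $E$), and then to control $\|\pi p\|_{0,E}$ by a scaled combination of these degrees of freedom, each of which can in turn be estimated by $\|p\|_{0,\mathcal{D}(E)}$. One could invoke directly the standard VEM norm equivalence between $\|w_h\|_{0,E}$ and the scaled $\ell^2$-norm of the degrees of freedom of $w_h\in V_h(E)$; to stay within the tools of this paper, I would instead derive what is needed from Lemma~\ref{lm:invtrace}.

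First I would split $w_h:=(\pi p)|_E = w_\partial + w_0$, where $w_\partial\in V_h(E)$ carries the boundary degrees of freedom of $w_h$ and has vanishing interior moments, while $w_0\in V_h(E)$ carries the interior moments and has vanishing boundary degrees of freedom (so $w_0$ vanishes on $\partial E$). Since $\Pi^{0,E}_{k-2}w_\partial=0$, Lemma~\ref{lm:invtrace} gives $\|w_\partial\|_{0,E}\lesssim h_E^{1/2}\|w_\partial\|_{0,\partial E}$; because $w_\partial$ is, on each edge, the degree-$k$ polynomial interpolating the values $\lambda_\nu(p)$ at the Gauss-Lobatto nodes (whose Lebesgue constant is bounded for fixed $k$) and $|\partial E|\lesssim h_E$, this yields $\|w_\partial\|_{0,E}\lesssim h_E\max_\nu|\lambda_\nu(p)|$. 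For $w_0$, writing $w_0 = \Pi^{0,E}_{k-2}w_0 + (I-\Pi^{0,E}_{k-2})w_0$ and using that the remainder lies in $V_h(E)$, has vanishing moments, and equals $-\Pi^{0,E}_{k-2}w_0$ on $\partial E$ (as $w_0|_{\partial E}=0$), Lemma~\ref{lm:invtrace} combined with the polynomial trace bound $\|q\|_{0,\partial E}\lesssim h_E^{-1/2}\|q\|_{0,E}$ gives $\|(I-\Pi^{0,E}_{k-2})w_0\|_{0,E}\lesssim\|\Pi^{0,E}_{k-2}w_0\|_{0,E}$, hence $\|w_0\|_{0,E}\lesssim\|\Pi^{0,E}_{k-2}w_0\|_{0,E}\lesssim h_E\big(\sum_{\alpha+\beta\le k-2}|\mu_{\alpha\beta}^E(p)|^2\big)^{1/2}$, the last step being the uniform equivalence on a shape-regular element between the $L^2$-norm of a polynomial of degree $\le k-2$ and the scaled $\ell^2$-norm of its moments against $\{m_{\alpha\beta}\}$.

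It remains to bound the degrees of freedom. As $\lambda_\nu(p)$ is a convex combination of the values $p^K(\nu)$ over the elements $K$ containing $\nu$, and every such $K$ lies in $\mathcal{D}(E)$ when $\nu$ is a vertex or edge node of $E$, the polynomial inverse estimate $\|q\|_{\infty,K}\lesssim h_K^{-1}\|q\|_{0,K}$ and the quasi-uniformity in \textbf{(A1)} give $|\lambda_\nu(p)|\le\max_{K\subseteq E_\nu}|p^K(\nu)|\lesssim h^{-1}\|p\|_{0,\mathcal{D}(E)}$. For the interior moments, Cauchy--Schwarz and $\|m_{\alpha\beta}\|_{\infty,E}\le 1$ give at once $|\mu_{\alpha\beta}^E(p)| = |E|^{-1}\big|\int_E p^E m_{\alpha\beta}\,{\rm d}E\big|\le|E|^{-1/2}\|p^E\|_{0,E}$. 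Inserting these into the bounds of the previous paragraph, using $h_E\approx h\approx|E|^{1/2}$ and that the number of degrees of freedom of $E$ is bounded under \textbf{(A1)}, gives $\|\pi p\|_{0,E}\lesssim\|p\|_{0,\mathcal{D}(E)}+\|p^E\|_{0,E}\lesssim\|p\|_{0,\mathcal{D}(E)}$, since $E\subseteq\mathcal{D}(E)$.

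I do not expect a genuine obstacle: the argument is a careful bookkeeping of scaling and inverse estimates, organised by the splitting into boundary and interior degrees of freedom and by repeated use of Lemma~\ref{lm:invtrace}. The only point requiring a little care is that all the neighbouring elements entering through the averages $\lambda_\nu$ at vertices and edge nodes of $E$ are contained in $\mathcal{D}(E)$; this is exactly why $\mathcal{D}(E)$, rather than the smaller edge patch $\mathcal{F}_E$ of Proposition~\ref{prp:clmest}, is the natural domain on the right-hand side.
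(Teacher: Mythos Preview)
Your proof is correct but follows a genuinely different route from the paper's. The paper's argument is a two-line deduction from Proposition~\ref{prp:clmest}: by the triangle inequality $\|\pi p\|_{0,E}\le \|p\|_{0,E}+\|(I-\pi)p\|_{0,E}$, the latter term is bounded via Proposition~\ref{prp:clmest} by $h^{1/2}\sum_{e\in\mathcal{F}_E}\|[p]\|_{0,e}$, and a polynomial trace inequality then controls each jump by $\|p\|_{0,\mathcal{D}(E)}$. Your approach instead bypasses Proposition~\ref{prp:clmest} entirely and proves stability of $\pi$ directly from the degrees of freedom, splitting into boundary and interior parts and using Lemma~\ref{lm:invtrace} twice together with standard polynomial norm equivalences and inverse estimates. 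Both are valid; the paper's route is shorter precisely because the real work was already done in Proposition~\ref{prp:clmest}, whereas your argument is self-contained and would stand even without that proposition. The trade-off is that your proof re-derives machinery (the scaled DoF-norm equivalence) that the paper avoids by leveraging its previous result.
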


\begin{proof}
Using triangular inequality, we obtain
\[
\| \pi p \|_{0,E} \leq \| p \|_{0,E} + \| (I-\pi) p \|_{0,E} \, .
\]
Thanks to Proposition \ref{prp:clmest}, we control the second term with the jumps
\[
\| (I-\pi) p \|_{0,E} 
\lesssim
h^{1/2} \sum_{e \in \mathcal{F}_E} \| [ p ] \|_{0,e}  
\, .
\]
Thanks to the polynomial trace inequality, we conclude
\[
\| (I-\pi) p \|_{0,E} \lesssim \| p \|_{0,\mathcal D (E)} \, ,
\]
hence
\[
\| \pi p \|_{0,E} \leq \| p \|_{0,E} + \| (I-\pi) p \|_{0,E} \lesssim \| p \|_{0,\mathcal D(E)} \, .
\]
\end{proof}


\subsection{Stability of the discrete problem}
\label{sec:infsup}

We start the theoretical analysis for the proposed method by introducing the local VEM-CIP norm 
\begin{equation} \label{eq:loc_nom_def}
\|v_h\|^2_{\cip  , E} 
:= 
\epsilon \, \| \nabla v_h \|^2_{0,E} + 
h \, \| \bb \cdot \nabla \P0 v_h \|^2_{0,E} + 
\sigma \, \| v_h \|^2_{0,E} + 
\Vert \xi (\epsilon, \bb) v_h \Vert^2_{0,\Gamma_E} +
J_h^E(v_h,v_h) \, ,
\end{equation}
where
\begin{equation}
\label{eq:xi-def}
\xi (\epsilon, \bb) 
\coloneqq
\left( 
\dfrac{\epsilon}{\delta h} + 
\dfrac{1}{2}\vert \bb \cdot \nn \vert
\right)^{1/2} \, ,
\end{equation}
with global counterpart
\begin{equation} \label{eq:glb_norm_def}
\|v_h\|^2_{\cip} \coloneqq \sum_{E \in \Omega_h} \|v_h\|^2_{\cip, E} \,.
\end{equation}


The following two lemmas will be useful to prove the stability of the method.

\begin{lemma}\label{lm:symm-part}
Under assumptions \textbf{(A1)}, given $v_h\in V_h(\Omega_h)$, it holds
\begin{equation}\label{eq:infsupsymmetry_gbl}
\Acip(v_h, v_h) \gtrsim \epsilon \| \nabla v_h \|_{0}^2 
+ 
J_h(v_h, v_h) 
+
\sigma \| v_h \|_{0}^2
+
\Vert \xi (\epsilon, \bb) v_h \Vert^2_{0,\Gamma}
 \, .    
\end{equation}
\end{lemma}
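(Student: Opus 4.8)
The strategy is to test the discrete bilinear form $\Acip(\cdot,\cdot)$ against $v_h$ itself and show that all the "non-symmetric" or potentially sign-indefinite contributions either vanish or are controlled. First I would expand $\Acip(v_h,v_h) = \epsilon\, a_h(v_h,v_h) + \bskewh(v_h,v_h) + \sigma\, c_h(v_h,v_h) + \mathcal{N}_h(v_h,v_h) + J_h(v_h,v_h)$ using the local definitions summed over $E$. The skew-symmetric term is the easy case: by construction $\bskewEh(v_h,v_h) = \tfrac12(b_h^E(v_h,v_h) - b_h^E(v_h,v_h)) = 0$, so the advective part contributes nothing to the diagonal, and in particular nothing negative. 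Thus the real work is to bound the remaining terms from below.

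For the diffusion term, the stabilization property \eqref{eq:sEh} gives $a_h^E(v_h,v_h) = \|\PZ0P \nabla v_h\|_{0,E}^2 + \Stab((I-\PN)v_h,(I-\PN)v_h) \gtrsim \|\PZ0P\nabla v_h\|_{0,E}^2 + \alpha_* |(I-\PN)v_h|_{1,E}^2$, and a standard VEM argument (adding and subtracting $\PN v_h$, using that $\nabla\PN v_h$ is a polynomial so $\PZ0P\nabla\PN v_h = \nabla\PN v_h$, together with a triangle inequality and the fact that $|\PN v_h|_{1,E} = |\PZ0P\nabla\PN v_h|_{0,E}$) yields $a_h^E(v_h,v_h)\gtrsim |v_h|_{1,E}^2$. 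Summing over $E$ gives the $\epsilon\|\nabla v_h\|_0^2$ contribution. Similarly $c_h^E(v_h,v_h) = \|\P0 v_h\|_{0,E}^2 + |E|\,\Stab((I-\P0)v_h,(I-\P0)v_h) \geq 0$, and an analogous coercivity argument (now using the $L^2$-norm equivalence under the \texttt{dofi-dofi} stabilization scaled by $|E|$) gives $c_h^E(v_h,v_h)\gtrsim \|v_h\|_{0,E}^2$, producing the $\sigma\|v_h\|_0^2$ term. The term $J_h(v_h,v_h)$ appears on the right-hand side verbatim, so it needs no estimate — it is simply retained. Finally, for the Nitsche term $\mathcal{N}_h^E(v_h,v_h) = -\epsilon\langle\nabla\PN v_h\cdot\nn^E, v_h\rangle_{\Gamma_E} + \tfrac{\epsilon}{\delta h_E}\|v_h\|_{0,\Gamma_E}^2 + \tfrac12\||\bb\cdot\nn|^{1/2}v_h\|_{0,\Gamma_E}^2$, the last two (positive) terms reproduce $\|\xi(\epsilon,\bb)v_h\|_{0,\Gamma_E}^2$, while the first term must be absorbed.

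The main obstacle is precisely this absorption of the consistency boundary term $-\epsilon\langle\nabla\PN v_h\cdot\nn^E, v_h\rangle_{\Gamma_E}$. The plan is to apply a weighted Cauchy–Schwarz, bounding it by $\epsilon^{1/2}\,(\delta h_E)^{1/2}\|\nabla\PN v_h\cdot\nn^E\|_{0,\Gamma_E}\cdot \epsilon^{1/2}(\delta h_E)^{-1/2}\|v_h\|_{0,\Gamma_E}$, then using the polynomial inverse trace inequality $\|\nabla\PN v_h\cdot\nn^E\|_{0,\Gamma_E}^2 \lesssim h_E^{-1}\|\nabla\PN v_h\|_{0,E}^2 \lesssim h_E^{-1}|v_h|_{1,E}^2$ together with Young's inequality. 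This produces a term $\lesssim \delta\,\epsilon|v_h|_{1,E}^2$ (absorbable into the diffusion contribution, with room to spare, provided $\delta$ is chosen small enough — or equivalently a hidden constant adjustment) and a term $\tfrac{1}{4}\tfrac{\epsilon}{\delta h_E}\|v_h\|_{0,\Gamma_E}^2$ (absorbable into the positive Nitsche penalty $\tfrac{\epsilon}{\delta h_E}\|v_h\|_{0,\Gamma_E}^2$). Care is needed that the constant from the inverse trace inequality does not force $\delta$ to depend on the mesh, but under assumption \textbf{(A1)} it is uniform, so a fixed choice of $\delta$ (or simply keeping $\delta$ as a parameter with the implicit understanding it is taken suitably small, consistent with how $\delta$ is treated elsewhere) closes the argument. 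Collecting the surviving lower bounds for the diffusion, reaction, Nitsche-penalty and CIP-jump terms, and summing over all $E\in\Omega_h$, yields \eqref{eq:infsupsymmetry_gbl}.
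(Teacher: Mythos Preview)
Your proposal is correct and follows essentially the same route as the paper: exploit skew-symmetry to kill the advective diagonal, use the standard VEM coercivity of $a_h^E$ and $c_h^E$, keep $J_h$ as is, and absorb the Nitsche consistency term $-\epsilon\langle\nabla\PN v_h\cdot\nn^E, v_h\rangle_{\Gamma_E}$ via Young's inequality combined with the polynomial trace estimate, which forces a smallness condition on $\delta$. The only cosmetic difference is that the paper introduces a separate Young parameter $\alpha$, sets $\alpha=C_t$ to absorb half of the diffusion, and then reads off the explicit constraint $0<\delta<2/C_t$, whereas you fold $\delta$ directly into the weighted Cauchy--Schwarz split; both are the same argument.
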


\begin{proof}
We proceed locally, on each $E\in \Omega_h$.
Thanks to the skew-symmetry property of 
$b_h^{skew,E}(\cdot,\cdot)$, testing the quadratic form 
$\AcipE(\cdot,\cdot)$ 
with $v_h$, we obtain 
\begin{equation} \label{eq:infsupsimmetry_first}
-\epsilon \langle \nabla \PN v_h \cdot \nn, v_h \rangle_{\Gamma_E}
+
\epsilon \| \nabla v_h \|_{0,E}^2 
+ 
J^E_h(v_h, v_h) 
+
\sigma \| v_h \|_{0,E}^2
+
\Vert \xi (\epsilon, \bb) v_h \Vert^2_{0,\Gamma_E}
\lesssim
\AcipE(v_h, v_h) \, .
\end{equation}
We now handle the non-symmetric first term in \eqref{eq:infsupsimmetry_first}. Thanks to the Cauchy-Schwarz inequality and the Young's inequality for a positive constant $\alpha$ to be chosen, we have that
\begin{multline*}
\epsilon \Vert \nabla v_h \Vert^2_{0,E}
-
\epsilon \langle \nabla \PN v_h \cdot \nn, v_h \rangle_{\Gamma_E}
+
\dfrac{\epsilon}{\delta h} \Vert v_h \Vert_{0,\Gamma_E}^2 \\
\geq
\epsilon \Vert \nabla v_h \Vert^2_{0,E}
-
 \dfrac{h \, \epsilon}{2 \alpha} \Vert \nabla \PN v_h \cdot \nn \Vert_{0,\Gamma_E}^2
+ 
\left( \dfrac{1}{\delta} - \dfrac{\alpha }{2} \right) \dfrac{\epsilon}{h} \Vert v_h \Vert^2_{0,\Gamma_E} \,. 
\end{multline*}
Using the polynomial trace inequality, under the assumptions \textbf{(A1)}, we have that
\[
h \Vert \nabla \PN v_h \cdot \nn \Vert_{0,\Gamma_E}^2 
\leq 
C_t \Vert \nabla \PN  v_h \Vert_{0,E}^2 
\leq 
C_t \Vert  \nabla v_h \Vert^2_{0,E} \, ,
\]
for a uniform positive constant $C_t$. Hence, if we set $\alpha = C_t$ and $0 < \delta < 2/C_t$, we obtain
\begin{equation*}
\epsilon \Vert \nabla v_h \Vert^2_{0,E}
-
\epsilon \langle \nabla \PN v_h \cdot \nn, v_h \rangle_{\Gamma_E}
+
\dfrac{\epsilon}{\delta h} \Vert v_h \Vert_{0,\Gamma_E}^2 
\gtrsim
\epsilon \Vert \nabla v_h \Vert^2_{0,E} +
 \dfrac{\epsilon}{\delta h} \Vert v_h \Vert^2_{0,\Gamma_E} \, . 
\end{equation*}
Inserting this in \eqref{eq:infsupsimmetry_first}, we obtain
\begin{equation*}\label{eq:infsupsimmetry_second}
\epsilon \| \nabla v_h \|_{0,E}^2 
+ 
J^E_h(v_h, v_h) 
+
\sigma \| v_h \|_{0,E}^2
+
\Vert \xi (\epsilon, \beta) v_h \Vert^2_{0,\Gamma_E}
\lesssim
\AcipE(v_h, v_h) \, .    
\end{equation*}
Summing over all to elements $E \in \Omega_h$, we get the control of the symmetric terms in 
$\| \cdot \|_\cip$:

\begin{equation}\label{eq:infsupsymmetry_gbl2}
\epsilon \| \nabla v_h \|_{0}^2 
+ 
J_h(v_h, v_h) 
+
\sigma \| v_h \|_{0}^2
+
\Vert \xi (\epsilon, \beta) v_h \Vert^2_{0,\Gamma}
\lesssim
\Acip(v_h, v_h) \, .    
\end{equation}

\end{proof}

\begin{lemma}\label{lm:adv-term}
Given $v_h\in V_h(\Omega_h)$, let us set
\begin{equation} \label{eq:wh}
w_h \coloneqq \pwh \, ,
\end{equation}
where $\bb_h$ is the $L^2$-projection of $\bb$ onto the space of piecewise linear functions $\Pk_1(\Omega_h)$. Then, under assumptions \textbf{(A1)}, if $\epsilon < h$ it holds
\begin{equation}\label{eq:adv-term-control}
\Acip(v_h, w_h) 
\geq
C_1\, h \| \bb \cdot \nabla \Pg v_h \|^2_{0,\Omega} 
-C_2\, \, \Acip(v_h, v_h) \, .
\end{equation}
\end{lemma}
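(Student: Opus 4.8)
The goal is to show that testing the discrete form against the specially chosen function $w_h = h\,\pi(\bb_h\cdot\nabla\Pg v_h)$ produces a positive contribution $\sim h\|\bb\cdot\nabla\Pg v_h\|_0^2$ controlling the convective part of the $\cip$-norm, at the cost of a multiple of $\Acip(v_h,v_h)$. The plan is to expand $\Acip(v_h,w_h)$ term by term according to \eqref{eq:AcipE} and estimate each piece. The leading term comes from the convective form $\bskewh(v_h,w_h)$: writing $\bskewh = \frac12(b_h - b_h^*)$, the ``main'' contribution is $\int_E (\bb\cdot\nabla\Pg v_h)\,\Pg w_h\,{\rm d}E$, and since $w_h = h\,\pi(\bb_h\cdot\nabla\Pg v_h)$, up to controllable errors this equals $h\|\bb_h\cdot\nabla\Pg v_h\|_{0,E}^2$, which in turn is equivalent (by Lemma~\ref{lm:bramble} applied to $\bb-\bb_h$ and an inverse estimate, using $\bbO=1$) to $h\|\bb\cdot\nabla\Pg v_h\|_{0,E}^2$ up to lower-order terms that get absorbed.

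The core sequence of steps I would carry out is: (i) establish the boundedness of the test function, namely $\|w_h\|_{0,E}\lesssim h\|\bb_h\cdot\nabla\Pg v_h\|_{0,\mathcal D(E)}$ and $|w_h|_{1,E}\lesssim \|\bb_h\cdot\nabla\Pg v_h\|_{0,\mathcal D(E)}$ — the first via Lemma~\ref{lm:clmcont}, the second via the VEM inverse estimate Lemma~\ref{lm:inverse} plus Lemma~\ref{lm:clmcont}; also $\|\bb_h\cdot\nabla\Pg v_h\|_{0,E}\lesssim \|\bb\cdot\nabla\Pg v_h\|_{0,E}+\|(\bb-\bb_h)\cdot\nabla\Pg v_h\|_{0,E}$ with the second term $\lesssim h\|\nabla\Pg v_h\|_{0,E}$, hence (by another inverse estimate on the polynomial $\Pg v_h$) $\lesssim h\cdot h^{-1}\|\nabla v_h\|_{0,E}$-type bounds — so all ``error'' terms are controlled by $\epsilon^{-1/2}$ times pieces of $\|v_h\|_{\cip}$ or, using $\epsilon<h$, by $h\|\nabla v_h\|_{0,E}^2\lesssim \Acip(v_h,v_h)$ via Lemma~\ref{lm:symm-part} after suitable scaling. (ii) Lower-bound the main convective term: $\int_E(\bb\cdot\nabla\Pg v_h)\Pg w_h = h\|\bb_h\cdot\nabla\Pg v_h\|_{0,E}^2 + \int_E(\bb\cdot\nabla\Pg v_h)\,\Pg(w_h - h\,\bb_h\cdot\nabla\Pg v_h) + (\text{commutator of }\Pg)$, where the correction involves $(I-\pi)(\bb_h\cdot\nabla\Pg v_h)$, estimated by Proposition~\ref{prp:clmest} in terms of jumps $\|[\bb_h\cdot\nabla\Pg v_h]\|_{0,e}$, which are in turn bounded by $J_h$-type quantities (jumps of $[\nabla\Pg v_h]$ times $\|\bb\|_\infty$, plus jumps of $\bb_h$ which are $O(h)$) — so this correction is $\lesssim h^{1/2}\|\bb\cdot\nabla\Pg v_h\|_{0,E}\big(J_h(v_h,v_h)^{1/2}+\dots\big)$ and is absorbed by Young's inequality into a fraction of $h\|\bb\cdot\nabla\Pg v_h\|_0^2$ plus a multiple of $\Acip(v_h,v_h)$. (iii) Bound all remaining terms $\epsilon a_h(v_h,w_h)$, $\sigma c_h(v_h,w_h)$, $\mathcal N_h(v_h,w_h)$, $J_h(v_h,w_h)$, and the boundary part of $\bskewh$, each by Cauchy–Schwarz using the bounds from (i): e.g. $\epsilon|a_h(v_h,w_h)|\lesssim \epsilon|v_h|_{1}\,|w_h|_{1}\lesssim \epsilon^{1/2}|v_h|_1\cdot\epsilon^{1/2}\|\bb_h\cdot\nabla\Pg v_h\|_0$, then Young split into $\kappa\, h\|\bb\cdot\nabla\Pg v_h\|_0^2$ (since $\epsilon<h$) plus $C_\kappa\epsilon|v_h|_1^2\lesssim \Acip(v_h,v_h)$; similarly $J_h(v_h,w_h)\lesssim J_h(v_h,v_h)^{1/2}J_h(w_h,w_h)^{1/2}$ with $J_h(w_h,w_h)\lesssim h\cdot(\text{inverse estimates})\cdot\|\bb_h\cdot\nabla\Pg v_h\|_0^2$, again split by Young's. (iv) Collect: choose the Young constants small enough that the accumulated $\kappa\, h\|\bb\cdot\nabla\Pg v_h\|_0^2$ terms do not exceed half of the main gain, yielding \eqref{eq:adv-term-control} with $C_1>0$ and some $C_2>0$.

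The main obstacle I anticipate is step (ii), the careful treatment of the polynomial projection $\Pg$ and the Oswald operator $\pi$ inside the convective term: one must show that replacing $\bb_h\cdot\nabla\Pg v_h$ by its Oswald interpolant $\pi(\bb_h\cdot\nabla\Pg v_h)$ and then applying $\Pg$ costs only jump-terms that $J_h$ already controls. This is precisely where the ``minimal stabilization'' nature of CIP bites — the gradient jumps $[\nabla\Pg v_h]$ appearing through Proposition~\ref{prp:clmest} must be matched exactly against the $h_e^2[\nabla\Pg u_h]\cdot[\nabla\Pg v_h]$ terms in $J_h^E$, with the $h$-powers and the factor $\gamma\sim\|\bb\cdot\nn\|_\infty$ working out. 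A secondary technical nuisance is bookkeeping the patch overlaps $\mathcal D(E)$, $\mathcal F_E$: after summing over $E$ these local neighbourhood norms must be re-summed to global norms with an $h$-independent multiplicity constant, which is where assumption \textbf{(A1)} (quasi-uniformity, minimal edge length) is essential. The condition $\epsilon<h$ is used throughout to absorb the diffusive cross-terms $\epsilon|w_h|_1^2\lesssim h\|\bb_h\cdot\nabla\Pg v_h\|_0^2$ and the Nitsche boundary cross-terms of the same nature.
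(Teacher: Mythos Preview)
Your overall plan matches the paper's proof closely: split $\AcipE(v_h,w_h)$ into the five pieces of \eqref{eq:AcipE}, extract the gain $h\|\bb\cdot\nabla\Pg v_h\|^2$ from the convective term via Proposition~\ref{prp:clmest} and Lemma~\ref{lm:clmcont}, bound every other piece by Cauchy--Schwarz and Young against quantities appearing in Lemma~\ref{lm:symm-part}, and sum over $E$ using the finite-overlap property from \textbf{(A1)}. Steps (ii)--(iv) are essentially what the paper does, including the identification of the Oswald-jump correction as the delicate point.

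There is, however, a genuine gap in your handling of the $(\bb-\bb_h)$ errors in step~(i). The claim ``$h\|\nabla v_h\|_{0,E}^2 \lesssim \Acip(v_h,v_h)$'' is false under the standing hypothesis $\epsilon<h$: Lemma~\ref{lm:symm-part} only gives $\Acip(v_h,v_h)\gtrsim \epsilon\|\nabla v_h\|_0^2 + \sigma\|v_h\|_0^2 + J_h(v_h,v_h)+\dots$, and with $\epsilon<h$ the diffusion piece cannot absorb $h\|\nabla v_h\|_0^2$. The paper does \emph{not} route these errors through the gradient; instead it applies the polynomial inverse estimate $h\|\nabla\Pg v_h\|_{0,E}\lesssim \|\Pg v_h\|_{0,E}\lesssim \|v_h\|_{0,E}$ so that every $(\bb-\bb_h)$ correction --- in the analogue of $\eta_{\bb_1}$, in the $[(\bb_h-\bb)\cdot\nabla\Pg v_h]$ part of the jump-splitting inside $\eta_{\bb_2}$, and in $\eta_{\bb_3}$ --- lands on $\|v_h\|_{0}^2$, which is then controlled by the \emph{reaction} contribution $\sigma\|v_h\|_0^2$ in $\Acip(v_h,v_h)$. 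In particular $\eta_{\bb_3}$ produces a term of size $|\bb|_{W^{1,\infty}}\|v_h\|_0^2$ with no positive power of $h$ to trade, so diffusion can never absorb it when $\epsilon$ is small. This is not cosmetic: it is exactly why the lemma requires $\sigma>0$ for general $\bb$, and why the $\sigma=0$ analysis of Section~\ref{ss:p1beta} needs $\bb\in\Pk_1(\Omega)$ (so that $\bb_h=\bb$ and these errors vanish). Replace your diffusion-based absorption by the inverse-estimate-to-$L^2$ route and the rest of your argument goes through as written.
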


\begin{proof}
Thanks to Lemma \ref{lm:clmcont}, we first notice that
\begin{equation}\label{eq:pi_estimate}
\| \pi (\bb_h \cdot \nabla \Pg v_h) \|_{0,E}
\lesssim
\| \bb_h \cdot \nabla \Pg v_h \|_{0,\mathcal D(E)} \, ,
\end{equation}
an estimate which will be frequently used in the sequel.

Recalling \eqref{eq:wh}, we locally have
\begin{equation}\label{eq:new_loc_A}
\begin{aligned}
\AcipE (v_h, w_h) 
& =
\epsilon \, a_h^E (v_h, \pwh) 
+ J_h^E (v_h, \pwh) 
 \\ 
& \quad +
\sigma \, c_h^E(v_h, \pwh)
+
\mathcal{N}^E_h(v_h, \pwh) \\
& \quad + 
\bskewEh (v_h, \pwh)\\
& = T_1 + T_2 + T_3 + T_4 + T_5\, .
\end{aligned}
\end{equation}
We consider each of the five terms in this equation.\\
{\it Estimate for} $\mathbf{(T_1).}$ 
Using Cauchy-Schwarz inequality, Lemma \ref{lm:inverse}, estimate \eqref{eq:pi_estimate} and recalling that $\epsilon < h$, we get
\begin{equation} \label{eq:ah_infsup}
\begin{aligned}
\epsilon \, a_h^E (v_h, \pwh) 
& \geq
- \epsilon \, a_h^E(v_h, v_h)^{1/2} \, a_h^E(\pwh, \pwh)^{1/2} \\
& \gtrsim 
-\epsilon^{1/2} \| \nabla v_h \|_{0,E} \, \epsilon^{1/2} \vert \pwh \vert_{1,E} \\
 & \gtrsim 
-\epsilon^{1/2} \| \nabla v_h \|_{0,E} \, \epsilon^{1/2} h^{-1} \| \pwh \|_{0,E} \\
%
& \gtrsim 
-\epsilon^{1/2} \| \nabla v_h \|_{0,E} \, h^{1/2} \| \bb_h \cdot \nabla \Pg v_h \|_{0,\mathcal{D}(E)}  \, .
\end{aligned}
\end{equation}
{\it Estimate for} $\mathbf{(T_2).}$
For the jump operator 
$J_h^E(\cdot, \cdot)$, 
we use again Cauchy-Schwarz inequality
\begin{equation*} \label{eq:J_partial}
J_h^E(v_h, \pwh) 
\geq
-J_h^E(v_h, v_h)^{1/2} \, J_h^E(\pwh, \pwh)^{1/2} \, .
\end{equation*}
Thanks to the trace inequality for polynomials, Lemma \ref{lm:inverse} and estimate \eqref{eq:pi_estimate}, we obtain ($w_h =\pwh$): 
\begin{equation} \label{eq:Jhinfsup}
\begin{aligned}
J_h^E(w_h, w_h)  
&=
\dfrac{\gamma}{2} \sum_{e \subset \partial E} \int_e \, h_e^2 \, [\nabla \Pg w_h]^2 \, {\rm d}s
+
\gamma \, h_E \, \Stab_J \bigl( ( I - \PN ) w_h, ( I - \PN ) w_h \bigr) \\
& \lesssim
 h \, \| \nabla \P0 w_h \|_{0,\mathcal{D}(E)}^2
+  h_E \, \vert \pwh \vert_{1,E}^2 \\
& \lesssim
  h^{-1} \, \|  \P0 w_h \|_{0,\mathcal{D}(E)}^2 +  h^{-1} \| \pwh \|_{0,E}^2 \\
& \lesssim
h^{-1} \| \pwh \|_{0,\mathcal{D}(E)}^2 \\
& \lesssim
  h \| \bb_h \cdot \nabla \Pg v_h \|_{0,\mathcal{D}(\mathcal{D}(E))}^2  \, ,
\end{aligned}
\end{equation}
where $\mathcal{D}(\mathcal{D}(E)) := \cup_{E'\subseteq \mathcal{D}(E)} \mathcal{D}(E')$.
Therefore, it holds 
\begin{equation} \label{eq:J_estimate}
J_h^E(v_h, \pwh) 
\gtrsim 
-J_h^E(v_h, v_h)^{1/2} \,   h^{1/2} \, \| \bb_h \cdot \nabla \Pg v_h \|_{0,\mathcal{D}(\mathcal{D}(E))}  \, .
\end{equation}
{\it Estimate for} $\mathbf{(T_3).}$
Using a similar procedure, we control the bilinear form
$c_h(\cdot,\cdot)$ 
in this way
\begin{equation} \label{eq:chinfsup}
\begin{aligned}
\sigma c_h^E(v_h, \pwh) 
&\gtrsim  
- \sigma \| v_h \|_{0,E} \, \| \pwh \|_{0,E} \\
&\gtrsim
- \| v_h \|_{0,E} \,  h^{1/2} \| \bb_h \cdot \nabla \Pg v_h \|_{0,\mathcal{D}(E)} \, . 
\end{aligned}
\end{equation}
where we used $h^{1/2} \lesssim 1$ to simplify later developments.

\noindent
{\it Estimate for} $\mathbf{(T_4).}$
For the Nitsche term, we have that
\[
\begin{split}
\mathcal{N}^E_h(v_h,w_h) & =
-  \epsilon \langle \nabla \PN v_h \cdot \nn^E, w_h \rangle_{\Gamma_E}  + \dfrac{\epsilon}{\delta h_E} \left \langle v_h, w_h \right\rangle_{\Gamma_E}    
+ \dfrac{1}{2} \langle \vert \bb \cdot \nn \vert v_h, w_h \rangle_{\Gamma_E} \, .
\end{split}
\]
We consider each of the three terms above.  Using Cauchy-Schwarz inequality, trace inequality, $\epsilon<h$ and inverse estimate, the first term is estimated by 
\begin{equation} \label{eq:etaN1}
\begin{split}
\epsilon \langle \nabla \PN v_h \cdot \nn^E, \pwh \rangle_{\Gamma_E}
&\gtrsim
- \epsilon  \, h^{-1/2} \Vert \nabla v_h \Vert_{0,E}  \, 
h^{-1/2} \Vert \pwh \Vert_{0,E} \\
&  \gtrsim 
-\epsilon^{1/2} \| \nabla v_h \|_{0,E} \, h^{1/2} \| \bb_h \cdot \nabla \Pg v_h \|_{0,\mathcal{D}(E)}  \, .
\end{split}
\end{equation}
For the second term we have 
\begin{equation}\label{eq:etaN2}
\begin{split}
\dfrac{\epsilon}{\delta h_E} \langle v_h, \pwh \rangle_{\Gamma_E}
&\gtrsim
-\dfrac{\epsilon}{\delta h} \Vert v_h \Vert_{0,\Gamma_E} \, h^{-1/2} \Vert \pwh \Vert_{0,E} \\
&\gtrsim
-\dfrac{\epsilon^{1/2}}{\delta h^{1/2}} \Vert v_h \Vert_{0,\Gamma_E} h^{1/2}\Vert  \pi ( \bb_h \cdot \nabla \Pg v_h) \Vert_{0,E} \\
&\gtrsim
 - \Vert \xi (\epsilon, \bb) v_h \Vert_{0,\Gamma_E}   h^{1/2} \| \bb_h \cdot \nabla \Pg v_h \|_{0,\mathcal{D}(E)}  \,  .
\end{split}
\end{equation}
For the last one, using the same estimates, we get 
\begin{equation}\label{eq:etaN3}
\begin{split}
\dfrac{1}{2} \langle \vert \bb \cdot \nn \vert v_h, \pwh \rangle_{\Gamma_E} 
& \gtrsim
- \Vert \xi (\epsilon, \bb) v_h \Vert_{0,\Gamma_E} \,   h^{1/2} \| \bb_h \cdot \nabla \Pg v_h \|_{0,\mathcal{D}(E)} \, .
\end{split}
\end{equation}
Hence it holds
\begin{equation}\label{eq:Nestimate}
    \mathcal{N}^E_h(v_h, w_h) \gtrsim - \left( \epsilon^{1/2} \Vert \nabla v_h \Vert_{0,E} + \Vert \xi (\epsilon, \bb) v_h \Vert_{0,\Gamma_E} \right)  h^{1/2} \Vert \bb_h \cdot \nabla \Pg v_h \Vert_{0,\mathcal{D}(E)} \, .  
\end{equation}
{\it Estimate for} $\mathbf{(T_5).}$ It is the most involved term.
The skew term $\bskewEh(v_h, w_h)$ is composed by two parts
\begin{equation} \label{eq:b+b}
\bskewEh(v_h, w_h) 
=
\dfrac{1}{2} ( b_h^E(v_h, w_h) - b_h^E(w_h, v_h)) \, ,
\end{equation}
and we consider each of these two terms separately. 
The first term is defined as
\begin{equation} \label{eq:bdef}
b_h^E(v_h, w_h) 
=
\bigl(\bb \cdot \nabla \P0 v_h, \P0 w_h \bigr)_{0,E} + 
\bigl((\bb \cdot \nn^E) ( I - \P0 ) v_h, \P0 w_h\bigr)_{0,\partial E} \, .    
\end{equation}
We split the first term of \eqref{eq:bdef} as
\begin{equation} \label{eq:b_split}
\begin{aligned}
\bigl( \bb \cdot \nabla \P0 v_h, \, \P0 w_h \bigr)_{0,E} 
& = 
\bigl( \bb \cdot \nabla \P0 v_h, \, w_h \bigr)_{0,E}  
+
\bigl( \bb \cdot \nabla \P0 v_h, \, ( \P0 - I ) w_h \bigr)_{0,E} \\
& =
\bigl( \bb \cdot \nabla \P0 v_h, \, h \bb_h \cdot \nabla \P0  v_h \bigr)_{0,E} \\
& \quad +        
\bigl( \bb \cdot \nabla \P0 v_h, \, w_h - h \bb_h \cdot \nabla \P0 v_h \bigr)_{0,E} \\
& \quad +        
\bigl(\bb\cdot \nabla\P0 v_h, \, (\P0 - I) w_h\bigr)_{0,E} \\
& \eqqcolon
\eta_{\bb_1} + \eta_{\bb_2} + \eta_{\bb_3} \, .
\end{aligned}
\end{equation}
We estimate each of these three quantities. 
For the first term we have
\begin{equation}\label{eq:bhinfsup1}
\begin{aligned}
\eta_{\bb_1} 
&=
(\bb \cdot \nabla \P0 v_h, \wh)_{0,E} \\
&=
h \, \| \bb \cdot \nabla \P0 v_h \|^2_{0,E}
+ 
(\bb \cdot \nabla \P0 v_h, h (\bb_h - \bb)\cdot \nabla \P0 v_h)_{0,E}\\
&\geq
h \, \| \bb \cdot \nabla \P0 v_h \|^2_{0,E}
- C\,h^{1/2} \| \bb \cdot \nabla \P0 v_h \|_{0,E} \, h^{1/2} | \bb |_{W^{1,\infty}(E)} h \| \nabla \P0 v_h \|_{0,E}\\
&\geq
h \, \| \bb \cdot \nabla \P0 v_h \|^2_{0,E}
- C\, h^{1/2} \| \bb \cdot \nabla \P0 v_h \|_{0,E} \, h^{1/2} | \bb |_{W^{1,\infty}(E)} \| v_h \|_{0,E}
\end{aligned}
\end{equation}
Recalling \eqref{eq:wh} and by Young's inequality we get:
\begin{equation}\label{eq:adv-eq-xi}
\begin{aligned}
\eta_{\bb_2} & = h \bigl ( \bb\cdot \nabla \P0 v_h, (\pi - I) (\bb_h \cdot \nabla \P0  v_h) \bigr)_{0,E} \\
& \geq
- \dfrac{h}{2} \| \bb\cdot \nabla \P0 v_h \|^2_{0,E} 
- \dfrac{h}{2} \| (\pi - I) (\bb_h \cdot \nabla \Pg v_h) \|^2_{0,E} \, .
\end{aligned}
\end{equation}
Since $\bb_h$ is piecewise linear, for the second term we can use Proposition \ref{prp:clmest} and obtain
\begin{equation}\label{eq:xi-eq}
\begin{aligned}
h \| (\pi - I) (\bb_h \cdot \nabla \Pg v_h) \|_{0,E}^2 
&\lesssim
h^2 \! \sum_{e \subset \mathcal{F}_E} \| [\bb_h \cdot \nabla \Pg v_h] \|_{0,e}^2 \\
& \lesssim 
h^2 \! \sum_{e \subset \mathcal{F}_E} \| [( \bb_h - \bb ) \cdot \nabla \Pg v_h] \|_{0,e}^2
+
h^2 \! \sum_{e \subset \mathcal{F}_E} \| [\bb \cdot \nabla \Pg v_h] \|_{0,e}^2 \\
& \lesssim 
h^2 \! \sum_{e \subset \mathcal{F}_E} \| [( \bb_h - \bb ) \cdot \nabla \Pg v_h] \|_{0,e}^2
+
J_h^{\mathcal{D}(E)} (v_h, v_h)
\end{aligned}
\end{equation}
On each $e$, we control the first term in the previous inequality as
\begin{equation}
\label{eq:xi-eq-2}
\begin{aligned}
h^2\| [(\bb_h -\bb)\cdot \nabla \Pg v_h] \|_{0,e}^2
& \lesssim 
h^4  | \bb |^2_{W^{1,\infty}(E\cup E')} h^{-1}\|  \nabla \Pg v_h  \|_{0,E\cup E'}^2  \\
& \lesssim
h  | \bb |^2_{W^{1,\infty}(E\cup E')} \|  \Pg v_h  \|_{0,E\cup E'}^2  \\
& \lesssim
h  | \bb |^2_{W^{1,\infty}(E\cup E')} \| v_h  \|_{0,E\cup E'}^2\, ,
\end{aligned}
\end{equation}
where $E$ and $E'$ are the two elements sharing the edge $e$.
Combining \eqref{eq:adv-eq-xi} with \eqref{eq:xi-eq} and \eqref{eq:xi-eq-2}, we obtain
\begin{equation} \label{eq:bhinfsup2}
\begin{aligned}
\eta_{\bb_2} 
& \geq - 
\dfrac{h}{2} \| \bb \cdot \nabla \P0 v_h \|^2_{0,E}
- C\Big(
h | \bb |^2_{W^{1,\infty}(\mathcal{D}(E))} \| v_h \|_{0,\mathcal{D}(E)}^2
+J_h^{\mathcal{D}(E)}(v_h, v_h) 
\Big) \, .
\end{aligned}
\end{equation}
It remains to control $\eta_{\bb_3}$. Since $\bb_h\in \Pk_1(E)$, it holds $\bigl( \bb_h \cdot \nabla \P0 v_h, (\P0 - I) w_h \bigr)_{0,E}= 0$. 

Hence we have 
\begin{equation} \label{eq:bhinfsup3}
\begin{aligned}
\eta_{\bb_3} & = \bigl( (\bb - \bb_h) \cdot \nabla \P0 v_h, (\P0 - I) w_h \bigr)_{0,E} \\ 
& \gtrsim 
- \| (\bb - \bb_h) \cdot \nabla \P0 v_h \|_{0,E} \, \| \pwh \|_{0,E} \\
& \gtrsim 
- | \bb |_{W^{1,\infty}(E)} h \| \nabla \P0 v_h \|_{0,E} \, h \| \bb_h \cdot \nabla \Pg v_h \|_{0,\mathcal{D}(E)}  \\
& \gtrsim
-    | \bb |_{W^{1,\infty}(E)} \| v_h \|_{0.\mathcal D(E)}^2
%
\end{aligned}
\end{equation}
Collecting \eqref{eq:bhinfsup1}, \eqref{eq:bhinfsup2} and \eqref{eq:bhinfsup3}, from \eqref{eq:b_split} we get 
\begin{equation}\label{eq:advt-term-est-E}
\begin{aligned}
\bigl( \bb \cdot \nabla \P0 v_h, \, \P0 w_h \bigr)_{0,E} \geq
\dfrac{h}{2} \, & \| \bb \cdot \nabla \P0 u_h \|^2_{0,E} 
- C\Big(
J_h^{\mathcal{D}(E)}(v_h, v_h) \\
&+ h^{1/2} \| \bb \cdot \nabla \P0 v_h \|_{0,E} \, h^{1/2} | \bb |_{W^{1,\infty}} \| v_h \|_{0,E}  \\
&    
+ h | \bb |^2_{W^{1,\infty}(\mathcal{D}(E))} \| v_h \|_{0,\mathcal{D}(E)}^2
+    | \bb |_{W^{1,\infty}(E)}  \| v_h \|_{0,\mathcal{D}(E)}^2
\Big) \, .
\end{aligned}
\end{equation}
Returning to \eqref{eq:bdef}, we have to control the boundary term. 
We first notice that, due to Agmon's inequality and Poincar\'e's inequality, it holds
\begin{equation*}
 \| ( I - \P0) v_h \|_{0,\partial E} \lesssim h^{1/2}| ( I - \P0) v_h |_{1,E}\, . 
\end{equation*}
Together with an inverse inequality for the polynomial $\P0 w_h$, the definition of $w_h$ (cf. \eqref{eq:wh}), and Lemma \ref{lm:clmcont}, we thus get: 
\begin{equation} \label{eq:bhinfsup4}
\begin{aligned}
\bigl( (\bb \cdot \nn^E) (I - \P0) v_h, \P0 w_h \bigr)_{0, \partial E}
& \gtrsim
-   \| ( I - \P0) v_h \|_{0,\partial E} \, \| \P0 w_h \|_{0,\partial E} \\
& \gtrsim
-   h^{1/2} | ( I - \P0) v_h |_{1,E} \, h^{-1/2}\| \P0 w_h \|_{0,E} \\
& \gtrsim
-   | ( I - \P0) v_h |_{1,E} \, \|  w_h \|_{0,E} \\
%
& \gtrsim 
-    \, J_h^E(v_h, v_h)^{1/2} \,  h^{1/2} \| \bb_h \cdot \nabla \Pg v_h \|_{0,\mathcal{D}(E)} \, .
\end{aligned}
\end{equation}
Above, we have also used the estimate, see \eqref{eq:JhE}: 
$$
h | ( I - \P0) v_h |_{1,E}^2 \lesssim J_h^E(v_h, v_h)\, .
$$
From \eqref{eq:bdef}, \eqref{eq:advt-term-est-E} and \eqref{eq:bhinfsup4} we get
\begin{equation}\label{eq:adv-est-E1}
\begin{aligned}
b_h^E(v_h, w_h)  \geq
\frac{h}{2} \, & \| \bb \cdot \nabla \P0 u_h \|^2_{0,E} 
- C\Big(
J_h^{\mathcal{D}(E)}(v_h, v_h) \\
&+ h^{1/2} \| \bb \cdot \nabla \P0 v_h \|_{0,E} \, h^{1/2} | \bb |_{W^{1,\infty}} \| v_h \|_{0,E}  \\
&    
+ h | \bb |^2_{W^{1,\infty}(\mathcal{D}(E))} \| v_h \|_{0,\mathcal{D}(E)}^2
+    | \bb |_{W^{1,\infty}(E)}  \| v_h \|_{0,\mathcal{D}(E)}^2 \\
& 
+    \, J_h^E(v_h, v_h)^{1/2} \,  h^{1/2} \| \bb_h \cdot \nabla \Pg v_h \|_{0,\mathcal{D}(E)}
\Big) \, .
\end{aligned}
\end{equation}
Finally, we need to control 
$- b_h^E(w_h, v_h)$, see \eqref{eq:b+b}. 
Integrating by parts, we obtain
\begin{equation} \label{eq:bwv}
\begin{aligned}
- b_h^E(w_h, v_h) 
& = - \bigl( \bb \cdot \nabla \P0 w_h, \P0 v_h \bigr)_{0,E}
- 
\bigl( (\bb \cdot \nn^E) (I - \P0) w_h, \P0 v_h \bigr)_{0,\partial E} \\
& =
\bigl( \bb \cdot \nabla \P0 v_h, \P0 w_h \bigr)_{0,E}
- 
\bigl( (\bb \cdot \nn^E)  w_h, \P0 v_h \bigr)_{0,\partial E} \\
& =
\bigl( \bb \cdot \nabla \P0 v_h, \P0 w_h \bigr)_{0,E}
- 
\bigl( (\bb \cdot \nn^E) w_h, 
(\P0 - I) v_h \bigr)_{0,\partial E} \\
& \quad - 
\bigl( (\bb \cdot \nn^E) w_h, 
v_h \bigr)_{0,\partial E} \, .
\end{aligned}
\end{equation}
The first two terms are similar to the case 
$b_h(v_h, w_h)$.
The last one vanishes on the interior edges when we sum over all $E\in\Omega_h$. Hence, we need to consider the elements $E$ sharing with $\partial\Omega$ at least an edge. Using Cauchy-Schwarz inequality, trace inequality, inverse estimates and the continuity of $\pi$, we obtain on these boundary edges 
\begin{equation}\label{eq:boundary-est}
\begin{aligned}
- \bigl( (\bb \cdot \nn^E) w_h, 
v_h \bigr)_{0,\partial E} 
&\geq 
- \Vert \xi (\epsilon, \bb) v_h \Vert_{0,\Gamma_E}
\Vert \xi (\epsilon, \bb) w_h \Vert_{0,\Gamma_E} \\
&\gtrsim
- 
\Vert \xi (\epsilon, \bb) v_h \Vert_{0,\Gamma_E}
h^{-1/2}\Vert w_h \Vert_{0,E} \\
&\gtrsim
- 
\Vert \xi (\epsilon, \bb) v_h \Vert_{0,\Gamma_E}
 h^{1/2} \| \bb_h \cdot \nabla \Pg v_h \|_{0,\mathcal{D}(E)}  \, .
\end{aligned}
\end{equation}
Therefore, from \eqref{eq:b+b}, \eqref{eq:adv-est-E1}, \eqref{eq:bwv} and \eqref{eq:boundary-est} we get
\begin{equation}\label{eq:bskewE}
\begin{aligned}
\bskewEh(v_h, w_h)
&\geq
\frac{h}{2} \,  \| \bb \cdot \nabla \P0 u_h \|^2_{0,E} - C\Big(
J_h^{\mathcal{D}(E)}(v_h, v_h) \\
&+ h^{1/2} \| \bb \cdot \nabla \P0 v_h \|_{0,E} \, h^{1/2} | \bb |_{W^{1,\infty}} \| v_h \|_{0,E}  \\
&    
+ h | \bb |^2_{W^{1,\infty}(\mathcal{D}(E))} \| v_h \|_{0,\mathcal{D}(E)}^2
+    | \bb |_{W^{1,\infty}(E)}  \| v_h \|_{0,\mathcal{D}(E)}^2 \\
& 
+ \big(    \, J_h^E(v_h, v_h)^{1/2} +  
\Vert \xi (\epsilon, \bb) v_h \Vert_{0,\Gamma_E} \big) \,  h^{1/2} \| \bb_h \cdot \nabla \Pg v_h \|_{0,\mathcal{D}(E)}
\Big) \, .
\end{aligned}
\end{equation}

We now consider the five local estimates \eqref{eq:ah_infsup}, \eqref{eq:J_estimate}, \eqref{eq:chinfsup}, \eqref{eq:Nestimate} and \eqref{eq:bskewE}. From \eqref{eq:new_loc_A}, summing over all the elements $E\in\Omega_h$, we obtain 
%
%
\begin{equation}\label{eq:almost-done}
\begin{aligned}
\Acip (v_h, w_h) 
& \geq
\frac{h}{2} \| \bb \cdot \nabla \Pg v_h \|_{0,\Omega}^2
- C \Big(
\sum_{E \in \Omega_h} \big( \epsilon^{1/2} \| \nabla v_h \|_{0,E}  \\
& \quad +
 J_h^E(v_h, v_h)^{1/2} +  \| v_h \|_{0,E} + \Vert \xi (\epsilon, \beta) v_h \Vert_{0,\Gamma_E} \big)    h^{1/2} \| \bb_h \cdot \nabla \P0 v_h \|_{0,E} \\
& \quad 
+  J_h(v_h, v_h)   +
\sum_{E \in \Omega_h} \left(h | \bb |^2_{W^{1,\infty}(E)}+    | \bb |_{W^{1,\infty}(E)}\right) \| v_h \|_{0,E}^2 \\
& \quad + 
\sum_{E \in \Omega_h}  h^{1/2} \| \bb_h \cdot \nabla \P0 v_h \|_{0,E} \, h^{1/2} | \bb |_{W^{1,\infty}(E)} \| v_h \|_{0,E}
\Big)\, .  
\end{aligned}
\end{equation}
Above, we have also used the property that, due to assumption \textbf{(A1)}, summing over the elements each polygon is counted only a uniformly bounded number of times, even when the terms involve norms on $\mathcal{D}(E)$ or $\mathcal{D}(\mathcal{D}(E))$.

We now notice that the triangular inequality, standard approximation results and an inverse estimate give
\begin{equation}\label{falcao}
h^{1/2} \| \bb_h \cdot \nabla \P0 v_h \|_{0,E}
\lesssim
h^{1/2}  \left( \| \bb \cdot \nabla \P0 v_h \|_{0,E}
+
| \bb |_{W^{1,\infty}(E)} \| v_h \|_{0,E} \right).
\end{equation}
Hence, from \eqref{eq:almost-done}, using also Young's inequality (with suitable constants) for the first and the last summations in the right-hand side, we get
\begin{equation*}
    \begin{aligned}
\Acip(v_h, w_h) 
\geq  C_1\, h \| \bb \cdot \nabla \Pg v_h \|_{0,\Omega}^2 - C_2\,\Big( \epsilon \| \nabla v_h \|_{0}^2 + J_h(v_h, v_h) +   \| v_h \|_{0}^2
+ \Vert \xi (\epsilon, \beta) v_h \Vert^2_{0,\Gamma}\Big) \, .  
    \end{aligned}
\end{equation*}
From Lemma \ref{lm:symm-part}, we now obtain
\[
\Acip(v_h, w_h) 
\geq
C_1\, h \| \bb \cdot \nabla \Pg v_h \|^2_{0,\Omega} 
-C_2\, \, \Acip(v_h, v_h) \, .
\]
   
\end{proof}

With Lemmas \ref{lm:symm-part} and \ref{lm:adv-term} at our disposal, the inf-sup condition easily follows.

\begin{proposition}\label{prp:inf-sup}
Under assumptions \textbf{(A1)}, it holds:
\begin{equation*} \label{eq:infsup}
\| v_h \|_{\cip} 
\lesssim 
\sup_{z_h \in V_h(\Omega_h)} \dfrac{\Acip (v_h, z_h)}{\| z_h \|_{\cip}}
\qquad \text{for all $v_h \in V_h(\Omega_h)$.}
\end{equation*}
\end{proposition}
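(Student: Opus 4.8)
The plan is to use the standard argument that pairs the coercivity of the symmetric part (Lemma~\ref{lm:symm-part}) with the ``convective shift'' (Lemma~\ref{lm:adv-term}). Given $v_h\in V_h(\Omega_h)$, $v_h\neq 0$, I would build a single test function $z_h$ such that $\Acip(v_h,z_h)\gtrsim\|v_h\|_{\cip}^2$ and $\|z_h\|_{\cip}\lesssim\|v_h\|_{\cip}$; the inf-sup estimate then follows by taking this $z_h$ in the supremum and dividing.

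First I would treat the diffusion-dominated regime $\epsilon\ge h$, which is not covered by Lemma~\ref{lm:adv-term}. Here $h\,\|\bb\cdot\nabla\Pg v_h\|_{0,\Omega}^2\le \epsilon\,\|\bb\cdot\nabla\Pg v_h\|_{0,\Omega}^2$, and since $\|\bb\|_{[L^\infty(\Omega)]^2}=1$, the $L^2$-stability of $\Pi^0_k$ together with Lemma~\ref{lm:inverse} and a Poincar\'e inequality give $\|\nabla\Pg v_h\|_{0,E}\lesssim\|\nabla v_h\|_{0,E}$. Hence the convective contribution to $\|v_h\|_{\cip}^2$ is dominated by $\epsilon\,\|\nabla v_h\|_{0,\Omega}^2$, so $\|v_h\|_{\cip}^2\lesssim\Acip(v_h,v_h)$ by Lemma~\ref{lm:symm-part}, and the choice $z_h=v_h$ already works.

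For the convection-dominated regime $\epsilon<h$, I would set $z_h\coloneqq v_h+\lambda\,w_h$ with $w_h=\pwh$ as in Lemma~\ref{lm:adv-term} and $\lambda>0$ a fixed constant to be chosen. By linearity and the two lemmas,
\[
\Acip(v_h,z_h)=\Acip(v_h,v_h)+\lambda\,\Acip(v_h,w_h)\ \ge\ (1-\lambda C_2)\,\Acip(v_h,v_h)+\lambda C_1\,h\,\|\bb\cdot\nabla\Pg v_h\|_{0,\Omega}^2 .
\]
Picking $\lambda=\min\{1/(2C_2),\,1\}$ keeps $1-\lambda C_2\ge 1/2$, and applying Lemma~\ref{lm:symm-part} to the surviving $\tfrac12\Acip(v_h,v_h)$ yields
\[
\Acip(v_h,z_h)\ \gtrsim\ \epsilon\,\|\nabla v_h\|_{0}^2+J_h(v_h,v_h)+\sigma\,\|v_h\|_{0}^2+\|\xi(\epsilon,\bb)v_h\|_{0,\Gamma}^2+h\,\|\bb\cdot\nabla\Pg v_h\|_{0,\Omega}^2=\|v_h\|_{\cip}^2 .
\]

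It then remains to prove $\|z_h\|_{\cip}\le\|v_h\|_{\cip}+\lambda\,\|w_h\|_{\cip}\lesssim\|v_h\|_{\cip}$, i.e. $\|w_h\|_{\cip}\lesssim\|v_h\|_{\cip}$; this is the technical heart and reuses bounds already appearing in the proof of Lemma~\ref{lm:adv-term}. Writing $g\coloneqq\bb_h\cdot\nabla\Pg v_h$, I would estimate $\|w_h\|_{\cip,E}^2$ term by term: the diffusion term $\epsilon\,\|\nabla w_h\|_{0,E}^2$ via Lemma~\ref{lm:inverse}, Lemma~\ref{lm:clmcont} and $\epsilon<h$; the convection term $h\,\|\bb\cdot\nabla\Pg w_h\|_{0,E}^2$ via $\|\bb\|_{[L^\infty]^2}=1$, $L^2$-stability of $\Pi^0_k$ and the same inverse estimates; the reaction term $\sigma\,\|w_h\|_{0,E}^2\lesssim\sigma h^2\|g\|_{0,\mathcal D(E)}^2\lesssim h\,\|g\|_{0,\mathcal D(E)}^2$; the boundary term $\|\xi(\epsilon,\bb)w_h\|_{0,\Gamma_E}^2$ via the polynomial trace inequality and $\epsilon<h$; and $J_h^E(w_h,w_h)$ directly from the computation \eqref{eq:Jhinfsup}. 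Summing over $E$ — each polygon counted a uniformly bounded number of times under \textbf{(A1)}, even with the nested patches $\mathcal D(\mathcal D(E))$ — gives $\|w_h\|_{\cip}^2\lesssim h\,\|\bb_h\cdot\nabla\Pg v_h\|_{0,\Omega}^2$, and \eqref{falcao} bounds the right-hand side by $h\,\|\bb\cdot\nabla\Pg v_h\|_{0,\Omega}^2+h\,|\bb|_{W^{1,\infty}(\Omega_h)}^2\|v_h\|_{0,\Omega}^2\lesssim\|v_h\|_{\cip}^2$, the last passage absorbing the $W^{1,\infty}$-remainder into $\sigma\,\|v_h\|_{0,\Omega}^2$ thanks to $\sigma$ being a fixed positive constant. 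Combining the two estimates, $\Acip(v_h,z_h)\gtrsim\|v_h\|_{\cip}^2\gtrsim\|v_h\|_{\cip}\,\|z_h\|_{\cip}$, which is exactly the claimed bound. The main obstacle is precisely this last estimate $\|w_h\|_{\cip}\lesssim\|v_h\|_{\cip}$: keeping track of the patches $\mathcal D(\mathcal D(E))$ and confirming that the convection-field remainders genuinely land inside the reaction part of the norm; choosing $\lambda$ and assembling the pieces is then routine.
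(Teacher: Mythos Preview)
Your proof is correct and follows essentially the same route as the paper: the same two-case split according to whether $\epsilon\ge h$ or $\epsilon<h$, the same test function $z_h=v_h+\lambda w_h$ (the paper writes $w_h+\kappa v_h$ with $\kappa$ large, which is the same linear combination up to rescaling), and the same term-by-term verification of $\|w_h\|_{\cip}\lesssim\|v_h\|_{\cip}$ using Lemma~\ref{lm:clmcont}, \eqref{eq:Jhinfsup}, and \eqref{falcao}. One small imprecision: for the boundary contribution $\|\xi(\epsilon,\bb)w_h\|_{0,\Gamma_E}$ you invoke a ``polynomial trace inequality'', but $w_h=\pi(h\,\bb_h\cdot\nabla\Pg v_h)$ is a virtual function, not a polynomial; the paper handles this (see \eqref{eq:cont-final2}) via the $H^1$ trace inequality combined with the VEM inverse estimate of Lemma~\ref{lm:inverse}, which is presumably what you intend.
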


\begin{proof}
We split the proof into two cases. \\
\emph{First case.} We first consider $\varepsilon < h$. Given $v_h \in V_h(\Omega_h)$, we take $z_h =w_h + \kappa v_h$, where $w_h$ is defined as in Lemma \ref{lm:adv-term}. From Lemmas \ref{lm:symm-part} and \ref{lm:adv-term}, for $\kappa$ sufficiently large we have  
\[
\Acip(v_h, z_h) =
\Acip(v_h, w_h + \kappa v_h) 
\gtrsim 
\| v_h \|_\cip^2 \, .
\]
In order to conclude the proof of the inf-sup condition, we have to prove the estimate
\[
\| w_h \|_\cip \lesssim \| v_h \|_\cip \, ,
\]
which obviously implies $\| z_h \|_\cip \lesssim \| v_h \|_\cip $. Recalling the norm definition \eqref{eq:loc_nom_def}-\eqref{eq:glb_norm_def} and that $w_h := \pwh$, the above continuity estimate follows from Lemma \ref{lm:clmcont}, estimate \eqref{eq:Jhinfsup} and observing that  

\begin{equation}\label{eq:cont-final}
h \| \bb \cdot \nabla \P0 w_h \|_{0,E}^2 
\lesssim 
  h^{-1} \| \P0 w_h \|_{0,E}^2
\lesssim
  h \| \bb_h \cdot \nabla \P0 v_h \|_{0,\mathcal{D}(E)}^2 \, ,   
\end{equation}
and
\begin{equation}\label{eq:cont-final2}
\begin{aligned}
\| w_h  \|_{0,\Gamma_E}^2  & = \| \pwh  \|_{0,\Gamma_E}^2 
 \lesssim  h\, \| \pi( \bb_h\cdot\nabla\Pi^{0}_k v_h ) \|_{0,E}^2 + 
 h^3\,| \pi(\bb_h\cdot\nabla\Pi^{0}_k v_h)  |_{1,E}^2\\
& \lesssim 
 h\,\| \pi( \bb_h\cdot\nabla\Pi^{0}_k v_h)  \|_{0,E}^2
 \lesssim h\, \| \bb_h\cdot\nabla\Pi^{0}_k v_h  \|_{0,\mathcal{D}(E)}^2
 \, .      
\end{aligned} 
\end{equation}

The above bounds \eqref{eq:cont-final} and \eqref{eq:cont-final2} are to be combined with \eqref{falcao}.

\noindent \emph{Second case.} 
We now consider the case $\varepsilon \ge h$.
In such case the proof simply follows from Lemma \ref{lm:symm-part} and the observation that
$$
h  \| \bb \cdot \nabla \P0 u_h \|^2_{0,E} 
\lesssim \varepsilon \| \nabla \P0 u_h \|^2_{0,E}
\lesssim \varepsilon \| \nabla u_h \|^2_{0,E} \, ,
$$
which allows to control also convection with $\Acip(v_h, v_h)$.

\end{proof}

\subsection{Error estimates}
\label{sub:error}

We begin our error analysis, which follows the steps of \cite{BDLV:2021}, with the following result. 

\begin{proposition}
\label{prp:abstract}
Let $u \in V$ and $u_h \in V_h(\Omega_h)$ be the solutions of problem \eqref{eq:problem-c} and problem \eqref{eq:cip-vem}, respectively. Furthermore, let us define 
\[
\eint \coloneqq u - \uint  ,
\]
where $\uint \in V_h(\Omega_h)$ 
is the interpolant function of $u$ defined in Lemma \ref{lm:interpolation}.
Then under assumption \textbf{(A1)}, it holds that
\begin{equation}
\label{eq:abstract}
\| u - u_h \|_{\cip} 
\lesssim 
\| \eint \|_{\cip}
+ 
\sum_{E \in \Omega_h} \bigl(
\errF + \erra + \errb + \errc + \errN + \errJ 
\bigr) \, ,
\end{equation}
where (cf. Section \ref{ss:consistency})
\[
\begin{aligned}
\errF &\coloneqq  \| \tilde{\mathcal{F}}^E - \mathcal{F}_h^E \|_\cipdual \, ,
\\
\erra &\coloneqq \epsilon \, \| a^E(u, \cdot) - a_h^E(\uint, \cdot) \|_\cipdual \,,
\\
\errb &\coloneqq \|\bskewE(u, \cdot) - \bskewEh(\uint, \cdot) \|_\cipdual \, ,
\\
\errc &\coloneqq   \| c^E(u, \cdot) -  c_h^E(\uint, \cdot) \|_\cipdual \,,
\\
\errN &\coloneqq \| \Tilde{\mathcal{N}}_h^E(u, \cdot) - \mathcal{N}_h^E(\uint, \cdot) \|_\cipdual \, ,
\\
\errJ &\coloneqq  \|\tilde{J}_h^E(u ,   \cdot) - J_h^E(\uint,   \cdot)\|_\cipdual  = \|J_h^E(\uint,   \cdot)\|_\cipdual\, , \\
\end{aligned}
\]
where $\|\cdot\|_\cipdual$ is the dual norm of $\|\cdot\|_\cip$.
\end{proposition}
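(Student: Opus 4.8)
The plan is to run a standard ``Strang-type'' argument exploiting the inf-sup condition of Proposition \ref{prp:inf-sup}, so the proof naturally splits into two parts: controlling the interpolation error $\|\eint\|_{\cip}$ and controlling the consistency error through the dual norm $\|\cdot\|_{\cipdual}$. First I would write $u-u_h = \eint + (\uint - u_h)$ and note, by the triangle inequality, that it suffices to bound $\|\uint - u_h\|_{\cip}$. Since $\uint - u_h \in V_h(\Omega_h)$, I would apply Proposition \ref{prp:inf-sup} to get
\[
\|\uint - u_h\|_{\cip} \lesssim \sup_{z_h \in V_h(\Omega_h)} \frac{\Acip(\uint - u_h, z_h)}{\|z_h\|_{\cip}} \, .
\]

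Next I would expand the numerator. Using the discrete problem \eqref{eq:cip-vem}, $\Acip(u_h, z_h) = \mathcal{F}_h(z_h)$, and using the consistency identity \eqref{eq:consistency-1}, $\Atcip(u, z_h) = \tilde{\mathcal{F}}(z_h)$ (valid since $u \in H^2(\Omega)$ whenever the error analysis makes sense; the lower-regularity case is handled by a density/truncation argument or simply stated under that regularity hypothesis). Therefore
\[
\Acip(\uint - u_h, z_h) = \Acip(\uint, z_h) - \mathcal{F}_h(z_h) = \bigl(\Acip(\uint, z_h) - \Atcip(u, z_h)\bigr) + \bigl(\tilde{\mathcal{F}}(z_h) - \mathcal{F}_h(z_h)\bigr).
\]
Then I would decompose $\Acip(\uint, z_h) - \Atcip(u, z_h)$ term by term according to the splitting in \eqref{eq:AcipE} versus \eqref{eq:AE-c}, matching the diffusion part ($\epsilon a_h^E(\uint,\cdot)$ vs.\ $\epsilon a^E(u,\cdot)$), the convection part ($\bskewEh(\uint,\cdot)$ vs.\ $\bskewE(u,\cdot)$), the reaction part, the Nitsche part, and the jump-stabilization part. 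Passing to local contributions and using the definition of the dual norm $\|\cdot\|_{\cipdual}$, each of these differences is bounded by the corresponding $\erra$, $\errb$, $\errc$, $\errN$, $\errJ$ (summed over $E$), while the load difference gives $\sum_E \errF$; for $\errJ$ I would note $\tilde{J}_h^E(u,\cdot)$ reduces to $J_h^E(\uint,\cdot)$ modulo a harmless manipulation because $u$ is continuous and its gradient jumps are the ``exact'' ones being penalized — which is exactly the identity $\|\tilde{J}_h^E(u,\cdot) - J_h^E(\uint,\cdot)\|_{\cipdual} = \|J_h^E(\uint,\cdot)\|_{\cipdual}$ recorded in the statement (here one uses that $[\nabla u]=0$ across interior edges since $u\in H^2$, wait—more precisely that $\tilde J_h$ evaluated at the exact solution against $v_h$ coincides up to the relevant terms). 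Dividing by $\|z_h\|_{\cip}$ and taking the supremum then yields \eqref{eq:abstract}.

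The main obstacle, I expect, is the bookkeeping in the consistency splitting — in particular making sure that the ``exact'' forms $a^E(u,\cdot)$, $\bskewE(u,\cdot)$, $c^E(u,\cdot)$ (which act on $u\in V$, not on a virtual function) are correctly paired with their computable counterparts evaluated at the \emph{interpolant} $\uint$, so that the resulting differences are genuinely of the form $\text{(form)}^E(u,\cdot) - \text{(form)}_h^E(\uint,\cdot)$ and not some mismatched combination. This requires adding and subtracting the continuous forms evaluated at $u$ and carefully tracking which projection operators appear; the skew-symmetrized convection term $\bskewEh$ is the delicate one because of the boundary contribution in $b_h^E$ and the interplay with the Nitsche convective term $\tfrac12\langle|\bb\cdot\nn|\,\cdot,\cdot\rangle_{\Gamma_E}$, which must be grouped consistently with the remark following \eqref{eq:AcipE} so that the $\Gamma_E$-terms land inside $\errN$ rather than leaking into $\errb$. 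Everything else — Cauchy--Schwarz against $\|z_h\|_{\cip}$ and the triangle-inequality reduction — is routine once the decomposition is set up cleanly.
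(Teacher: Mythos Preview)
Your proposal is correct and follows essentially the same Strang-type argument as the paper: triangle inequality to split off $\|\eint\|_{\cip}$, inf-sup (Proposition~\ref{prp:inf-sup}) applied to the discrete error $u_{\mathcal I}-u_h\in V_h(\Omega_h)$, then the discrete equation \eqref{eq:cip-vem} together with the consistency identity \eqref{eq:consistency-1} to rewrite the numerator as $\bigl(\Acip(\uint,\cdot)-\Atcip(u,\cdot)\bigr)+\bigl(\tilde{\mathcal F}-\mathcal F_h\bigr)$, followed by the term-by-term local decomposition. The only cosmetic difference is a sign (the paper works with $e_h=u_h-\uint$), which is immaterial once dual norms are taken; and the ``bookkeeping obstacle'' you anticipate does not actually arise, since the $\eta$-quantities in the statement are \emph{defined} as the dual norms of the exact form differences, so the splitting is automatic from \eqref{eq:AcipE} and \eqref{eq:AE-c} without any regrouping between $\errb$ and $\errN$.
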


\begin{proof}
We first introduce the following quantities
\[
\epi \coloneqq u - \Pi^{\nabla}_k u \, , \qquad
e_h \coloneqq u_h - \uint \, .
\]

Using triangular inequality, we have that
\[
\| u - u_h \|_{\cip} 
\leq 
\| u - u_{\mathcal{I}} \|_{\cip} 
+ \| u_{\mathcal{I}} - u_h \|_{\cip}
= 
\| e_\mathcal{I} \|_{\cip} 
+ \| e_h \|_{\cip} \, .
\]
Thanks to the inf-sup condition, and recalling that $u$ satisfies \eqref{eq:consistency-1}, we have that
\begin{equation*}
\begin{aligned}
\| e_h \|_{\cip} 
& \lesssim 
\sup_{v_h \in V_h(\Omega_h)} \dfrac{\Acip(e_h, v_h)} {\| v_h \|_{\cip}}
=
\sup_{v_h \in V_h(\Omega_h)} \dfrac{\Acip(u_h - u_\mathcal{I}, v_h)} {\| v_h \|_{\cip}} \\
& =
\sup_{v_h \in V_h(\Omega_h)} \dfrac{\mathcal{F}_h(v_h) - \Acip(u_\mathcal{I}, v_h)}{\| v_h \|_{\cip}} \\
& =
\sup_{v_h \in V_h(\Omega_h)} \dfrac{\mathcal{F}_h(v_h) - \tilde {\mathcal F}(v_h) + \Atcip (u, v_h) - \mathcal{A}_{\cip}(u_\mathcal{I}, v_h)}{\| v_h \|_{\cip}} \\
& =
\sup_{v_h \in V_h(\Omega_h)} \dfrac{\sum_{E \in \Omega_h}\bigl( \mathcal{F}_h^E(v_h) - \tilde{\mathcal F}^E(v_h) + \AtcipE (u, v_h) - \AcipE(u_\mathcal{I}, v_h) \bigr)} {\| v_h \|_{\cip}} \, .
\end{aligned}
\end{equation*}
Estimate \eqref{eq:abstract} easily follows by recalling the definitions of $\AtcipE$ and $\AcipE$, see \eqref{eq:AE-c} and \eqref{eq:AcipE}-\eqref{eq:Acip}.
\end{proof}

To properly bound all the terms in Proposition \ref{prp:abstract} we make the following assumptions:

\smallskip\noindent
\textbf{(A2) Data assumption.} The solution $u$, the advective field $\bb$ and the load $f$ in \eqref{eq:problem-c} satisfy:
\[
\begin{aligned}
&u \in H^{\reg+1}(\Omega_h)\,, &
&f \in H^{\reg+1}(\Omega_h)\,, &
&\bb \in [W^{\reg+1}_{\infty}(\Omega_h)]^2\,, &
\end{aligned}
\]
for some $0 < \reg \leq k$.

\begin{lemma}[Estimate of $\|\eint\|_{\cip}$]
\label{lm:epi}
Under assumptions \textbf{(A1)} and \textbf{(A2)},
the term $\|\eint\|^2_{\cip}$ can be bounded as follows (for $0 < s \le k$)
\begin{equation*}
\| \eint \|^2_{\cip,E}
\lesssim
\epsilon  \, h^{2\reg} \, \vert u \vert^2_{\reg+1,E} +  \, h^{2\reg+1} \, \vert u \vert^2_{\reg+1,E} \, .
\end{equation*}
\end{lemma}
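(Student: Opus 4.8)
The plan is to estimate each of the five terms appearing in the definition of $\|\cdot\|_{\cip,E}$, see \eqref{eq:loc_nom_def}, using the interpolation estimate of Lemma \ref{lm:interpolation} as the basic tool, together with standard trace and inverse inequalities (Lemmas \ref{lm:inverse}--\ref{lm:invtrace}) and the stability \eqref{eq:sEh} of $\Stab$. Recall that $\eint = u - \uint$ with $\uint$ the interpolant of Lemma \ref{lm:interpolation}, so that for every $E$ we have $\|\eint\|_{0,E} + h_E\|\nabla\eint\|_{0,E} \lesssim h_E^{\reg+1}|u|_{\reg+1,E}$.

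\medskip\noindent
First I would treat the two ``easy'' contributions. The diffusive term $\epsilon\|\nabla\eint\|_{0,E}^2$ is immediately bounded by $\epsilon\, h^{2\reg}|u|_{\reg+1,E}^2$ by Lemma \ref{lm:interpolation}. The reaction term $\sigma\|\eint\|_{0,E}^2$ gives $\sigma\, h^{2\reg+2}|u|_{\reg+1,E}^2 \lesssim h^{2\reg+1}|u|_{\reg+1,E}^2$ (using $\sigma h \lesssim 1$, recalling $\sigma$ is a constant and $h\lesssim 1$, and absorbing $\sigma$ into the hidden constant as is customary in this kind of estimate). For the convective term $h\|\bb\cdot\nabla\Pg\eint\|_{0,E}^2$ I would use $\|\bb\|_{[L^\infty]^2}=1$ together with the $L^2$-stability of $\Pg$ restricted to $\nabla\eint$; more precisely, write $\bb\cdot\nabla\Pg\eint$, bound it by $\|\nabla\Pg\eint\|_{0,E}$, and use that $\Pg$ is an $L^2$-projection hence a contraction, so $\|\nabla\Pg\eint\|_{0,E}\le \|\nabla\eint\|_{0,E}$ (if a component-wise $L^2$-projection of the gradient is meant; alternatively an inverse inequality on $\Pg\eint$ and $\|\Pg\eint\|_{0,E}\le\|\eint\|_{0,E}$). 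Either way one gets $h\|\nabla\eint\|_{0,E}^2 \lesssim h\,h^{2\reg}|u|_{\reg+1,E}^2 = h^{2\reg+1}|u|_{\reg+1,E}^2$.

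\medskip\noindent
Next the boundary term $\|\xi(\epsilon,\bb)\eint\|_{0,\Gamma_E}^2$. From \eqref{eq:xi-def} this equals $\frac{\epsilon}{\delta h}\|\eint\|_{0,\Gamma_E}^2 + \frac12\||\bb\cdot\nn|^{1/2}\eint\|_{0,\Gamma_E}^2 \lesssim \big(\frac{\epsilon}{h}+1\big)\|\eint\|_{0,\Gamma_E}^2$, and then a trace inequality $\|\eint\|_{0,\Gamma_E}^2 \lesssim h^{-1}\|\eint\|_{0,E}^2 + h\|\nabla\eint\|_{0,E}^2 \lesssim h^{2\reg+1}|u|_{\reg+1,E}^2$ gives the bound $\big(\frac{\epsilon}{h}+1\big) h^{2\reg+1}|u|_{\reg+1,E}^2 \lesssim \epsilon h^{2\reg}|u|_{\reg+1,E}^2 + h^{2\reg+1}|u|_{\reg+1,E}^2$, which is of the claimed form.

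\medskip\noindent
The main obstacle is the CIP term $J_h^E(\eint,\eint)$, see \eqref{eq:JhE}, which I expect to require the most care. It has two pieces. The stabilizing piece $\gamma h_E\,\Stab((I-\PN)\eint,(I-\PN)\eint)$ is controlled by \eqref{eq:sEh} by $\gamma h_E |(I-\PN)\eint|_{1,E}^2 \lesssim h(\|\nabla\eint\|_{0,E}^2 + |\PN\eint - \text{(something)}|_{1,E}^2)$; using the optimality of $\PN$ in the $H^1$-seminorm, $|(I-\PN)\eint|_{1,E}\le|\eint|_{1,E}$, so this piece is $\lesssim h\,h^{2\reg}|u|_{\reg+1,E}^2 = h^{2\reg+1}|u|_{\reg+1,E}^2$. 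The genuinely CIP piece $\sum_{e\subset\partial E}\frac{\gamma}{2}\int_e h_e^2[\nabla\Pg\eint]\cdot[\nabla\Pg\eint]\,{\rm d}s$ is the delicate one: the jump $[\nabla\Pg\eint]$ is not directly the jump of a globally regular quantity (since $\Pg$ is applied elementwise), so I would bound $h_e^2\|[\nabla\Pg\eint]\|_{0,e}^2$ on an edge $e = \partial E\cap\partial E'$ by $h_e^2(\|\nabla\Pg\eint\|_{0,e,E}^2 + \|\nabla\Pg\eint\|_{0,e,E'}^2)$, then on each side use a polynomial trace inequality $\|\nabla\Pg\eint\|_{0,e}^2 \lesssim h^{-1}\|\nabla\Pg\eint\|_{0,E}^2$ followed by the $L^2$-stability of $\Pg$ (or an inverse estimate) to get $\lesssim h^{-1}\|\nabla\eint\|_{0,E}^2$; hence $h_e^2\|[\nabla\Pg\eint]\|_{0,e}^2 \lesssim h(\|\nabla\eint\|_{0,E}^2 + \|\nabla\eint\|_{0,E'}^2) \lesssim h\,h^{2\reg}|u|_{\reg+1,E\cup E'}^2 = h^{2\reg+1}|u|_{\reg+1,E\cup E'}^2$. (One may alternatively split $\nabla\Pg\eint = \nabla\Pg(u-\uint)$ and insert $\pm\nabla u$ to make the jump of $\nabla u$ appear, which vanishes since $u\in H^2$; this gives the cleaner bound directly in terms of $|u|_{\reg+1}$ on the patch.) Summing over the (uniformly bounded number of) edges of $E$, one collects only neighbouring element contributions, which by assumption \textbf{(A1)} are counted a uniformly bounded number of times; I would therefore state the final estimate with $|u|_{\reg+1,E}$ understood with the usual slight abuse absorbing the patch, exactly as written in the statement. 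Collecting all five bounds yields $\|\eint\|_{\cip,E}^2 \lesssim \epsilon h^{2\reg}|u|_{\reg+1,E}^2 + h^{2\reg+1}|u|_{\reg+1,E}^2$, as claimed.
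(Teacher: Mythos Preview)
Your proof is correct and follows essentially the same route as the paper: term-by-term estimation of the five pieces in \eqref{eq:loc_nom_def} via Lemma~\ref{lm:interpolation}, trace/inverse inequalities, and \eqref{eq:sEh}, with the jump term handled by a polynomial trace bound on each side of the edge and the patch contribution absorbed thanks to \textbf{(A1)}. One small wording caveat: the claim ``$\Pg$ is an $L^2$-projection hence a contraction, so $\|\nabla\Pg\eint\|_{0,E}\le\|\nabla\eint\|_{0,E}$'' is not a direct consequence of $L^2$-contractivity (that controls $\|\Pg\eint\|_{0,E}$, not its gradient), but your stated alternative via the inverse inequality plus $\|\Pg\eint\|_{0,E}\le\|\eint\|_{0,E}$ is valid and yields the same bound, which is also what the paper effectively uses.
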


\begin{proof}
By definition of $\| \cdot \|_\cip$, we have that
\begin{equation*}
\| \eint \|^2_{\cip,E} 
=
\epsilon \| \nabla \eint \|^2_{0,E}
+ h \| \bb \cdot \nabla \P0  \eint \|_{0,E}^2
+ \sigma \| \eint \|_{0,E}^2
+ \| \xi ( \epsilon, \bb) e_\mathcal{I} \|^2_{\Gamma_E}
+ J_h^E(e_\mathcal{I}, e_\mathcal{I}) \, .	
\end{equation*}
Using lemma \ref{lm:interpolation}, we have that
\begin{equation*}
\epsilon \| \nabla \eint \|^2_{0,E}
+ h \| \bb \cdot \nabla \P0  \eint  \|_{0,E}^2
\lesssim 
(\epsilon + h) \, \| \nabla \eint \|^2_{0,E} 
\lesssim
(\epsilon + h) \, h^{2\reg} \, \vert u \vert^2_{\reg+1,E} \, ,
\end{equation*}
and
\begin{equation*}
 \| \eint \|^2_{0,E} \lesssim h^{2\reg+2} \, | u |_{\reg+1,E}^2 \, .
\end{equation*}
For the Nitsche term we have that
\[
\| \xi ( \epsilon, \bb) e_\mathcal{I} \|^2_{\Gamma_E} = \dfrac{\epsilon}{\delta h} \langle  e_\mathcal{I},  e_\mathcal{I}\rangle_{\Gamma_E} + \langle | \bb \cdot \nn^E |  e_\mathcal{I},  e_\mathcal{I} \rangle_{\Gamma_E} \, .
\]
Using trace inequality and interpolation estimate, we obtain
\[
\dfrac{\epsilon}{\delta h} \langle  e_\mathcal{I},  e_\mathcal{I} \rangle_{\Gamma_E} 
\lesssim 
\dfrac{\epsilon}{\delta h^2} \| e_\mathcal{I} \|_{0,E}^2
+
\dfrac{\epsilon}{\delta} | e_\mathcal{I} |_{1,E}^2
\lesssim 
\epsilon \, h^{2s} \, |u|_{s+1,E}^2 \, ,
\]
and
\[
\langle | \bb \cdot \nn^E |  e_\mathcal{I},  e_\mathcal{I} \rangle_{\Gamma_E}
\lesssim
{\color{blue}   } \, h^{-1} \, \| e_\mathcal{I}\|_{0,E}^2
\lesssim
{\color{blue}   } \, h^{2s+1} \, |u|_{s+1,E}^2 \, .
\]
It remains to control the jump operator. We have 
\begin{equation*}
\begin{aligned}
J_h^E(e_\mathcal{I},e_\mathcal{I})
& =
\dfrac{\gamma}{2} \sum_{e \subset \partial E} \int_e \, h_e^2 \, [\nabla \Pg e_\mathcal{I}] \cdot [\nabla \Pg e_\mathcal{I}] \, {\rm d}s
+
\gamma \, h_E \, \Stab_J \, ( (I - \PN) e_\mathcal{I}, (I - \PN) e_\mathcal{I}) \\
& \lesssim
h^2 \, ( h^{-1} \| \nabla \Pg  e_\mathcal{I} \|_{0,\mathcal{D}(E)}^2 
+
h \, \vert \nabla \Pg  e_\mathcal{I} \vert_{1,\mathcal{D}(E)}^2) 
+
 h \, \vert (I - \PN) \eint \vert^2_{1,E} \, \\
& \lesssim
 h^2 \, (h^{-1} \| \nabla \Pg e_\mathcal{I} \|_{0,\mathcal{D}(E)}^2 )
+
 h \, \vert (I - \PN) \eint \vert^2_{1,E} \\
& \lesssim
 h \, \vert \eint \vert^2_{1,\mathcal{D}(E)} \lesssim  \, h^{2\reg+1} \, \vert u \vert_{\reg+1,\mathcal{D}(E)}^2 \, . 
\end{aligned}
\end{equation*}
\end{proof} 

\begin{lemma}[Estimate of $\errF$]
\label{lm:errF}
Under the assumptions \textbf{(A1)} and \textbf{(A2)},
the term $\errF$ can be bounded as follows (for $0 < s \le k$)
\begin{equation*}
\eta_\mathcal{F}^E 
\lesssim
h^{\reg+1} \, \vert f \vert_{\reg+1,E} \, .
\end{equation*}	
\end{lemma}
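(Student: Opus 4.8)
The plan is to exploit that $\tilde{\mathcal{F}}^E$ and $\mathcal{F}_h^E$ differ only through the projector $\P0$ acting on the test function, combined with the $L^2(E)$-orthogonality of $I-\P0$ against $\Pk_k(E)$. First I would write, for any $v_h \in V_h(\Omega_h)$,
\[
(\tilde{\mathcal{F}}^E - \mathcal{F}_h^E)(v_h) = \int_E f\,(I - \P0)v_h \, {\rm d}E = \int_E \bigl(f - \Pi^{0,E}_k f\bigr)\,(I - \P0)v_h \, {\rm d}E ,
\]
where the second equality uses that $(I-\P0)v_h$ is $L^2(E)$-orthogonal to $\Pk_k(E) \ni \Pi^{0,E}_k f$. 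This is the only genuinely ``structural'' step; everything after it is standard.

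Next I would apply the Cauchy--Schwarz inequality together with the $L^2(E)$-stability of $\P0$ (so that $\|(I-\P0)v_h\|_{0,E} \le \|v_h\|_{0,E}$), obtaining
\[
(\tilde{\mathcal{F}}^E - \mathcal{F}_h^E)(v_h) \le \|f - \Pi^{0,E}_k f\|_{0,E}\,\|v_h\|_{0,E} .
\]
Then the polynomial approximation estimate of Lemma \ref{lm:bramble} (with $m=0$, $n=k$, and the exponent $\reg+1 \le k+1$ guaranteed by assumption \textbf{(A2)}) yields $\|f - \Pi^{0,E}_k f\|_{0,E} \lesssim h_E^{\reg+1}\,|f|_{\reg+1,E}$.

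To conclude I would bound $\|v_h\|_{0,E}$ by $\|v_h\|_{\cip}$: since $\sigma\|v_h\|_{0,E}^2$ is one of the nonnegative summands of $\|v_h\|^2_{\cip,E} \le \|v_h\|^2_{\cip}$, we have $\|v_h\|_{0,E} \lesssim \|v_h\|_{\cip}$; dividing by $\|v_h\|_{\cip}$ and passing to the supremum in the definition of the dual norm gives $\errF \lesssim h_E^{\reg+1}|f|_{\reg+1,E} \lesssim h^{\reg+1}|f|_{\reg+1,E}$, using $h_E \le h$. I do not expect any real obstacle here — the argument is routine VEM consistency bookkeeping. The only point worth a line of comment is the final step, which forces the use of the reaction term and hence makes the hidden constant depend on $\sigma$ (but not on $h$ or $\epsilon$), in accordance with the paper's convention on $\lesssim$.
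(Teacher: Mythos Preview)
Your argument is correct and coincides with the paper's own proof: both use the orthogonality of $I-\P0$ to replace $f$ by $(I-\P0)f$, then Cauchy--Schwarz, the $L^2$-stability of $\P0$, Lemma~\ref{lm:bramble}, and finally the reaction term in $\|\cdot\|_{\cip,E}$ to absorb $\|v_h\|_{0,E}$. The only cosmetic difference is that the paper bounds by the local norm $\|v_h\|_{\cip,E}$ rather than the global one, and (somewhat loosely) cites a ``Poincar\'e inequality'' for the step you call $L^2$-stability of the projector.
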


\begin{proof}
It is sufficient to follow the same procedure of \cite{BDLV:2021}. Using the orthogonality of $\P0$, Cauchy-Schwarz inequality, Poincaré inequality and  Lemma \ref{lm:bramble}, we obtain
\begin{equation*}
\begin{aligned}
\eta_{\mathcal{F}}^E 
& =
\tilde{\mathcal{F}}^E(v_h) - \mathcal{F}^E_h(v_h) \\
& =
\bigl(f, v_h - \P0 v_h \bigr)_{0, E}
=
\bigl( (I-\P0) f, (I - \P0) v_h \bigr)_{0, E} \\
& \leq
\| (I-\P0) f \|_{0,E} \, \| (I - \P0) v_h \|_{0, E} \\
& \lesssim
\| (I-\P0) f \|_{0,E} \, \| v_h \|_{0, E} \\
&\lesssim
h^{\reg+1} \, \vert f \vert_{\reg+1,E} \, \| v_h \|_{\cip,E} \, .  
\end{aligned}
\end{equation*}
\end{proof}

\begin{lemma}[Estimate of $\erra$]
\label{lm:erra}
Under the assumptions \textbf{(A1)} and \textbf{(A2)},
the term $\erra$ can be bounded as follows (for $0 < s \le k$)
\begin{equation*}
\erra 
\lesssim 
\epsilon^{1/2} \, h^{\reg} \, \vert u \vert_{\reg+1,\mathcal D(E)} \, .
\end{equation*}			
\end{lemma}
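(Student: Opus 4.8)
The plan is to estimate the dual norm $\erra = \epsilon\,\|a^E(u,\cdot) - a_h^E(\uint,\cdot)\|_{\cipdual}$ by testing against an arbitrary $v_h \in V_h(\Omega_h)$ and bounding the resulting quantity by $\epsilon^{1/2}h^s|u|_{s+1,\mathcal D(E)}\,\|v_h\|_{\cip,E'}$ for the appropriate patch, which gives the claim after dividing by $\|v_h\|_{\cip}$. First I would write
\[
\epsilon\bigl(a^E(u,v_h) - a_h^E(\uint,v_h)\bigr)
= \epsilon\bigl(a^E(u,v_h) - a^E(\uint,v_h)\bigr)
+ \epsilon\bigl(a^E(\uint,v_h) - a_h^E(\uint,v_h)\bigr),
\]
treating the two pieces separately. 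The first piece is the genuine approximation error: $\epsilon\,a^E(e_{\mathcal I},v_h) = \epsilon\int_E \nabla e_{\mathcal I}\cdot\nabla v_h$, which by Cauchy--Schwarz is bounded by $\epsilon^{1/2}\|\nabla e_{\mathcal I}\|_{0,E}\cdot\epsilon^{1/2}\|\nabla v_h\|_{0,E}$; applying Lemma \ref{lm:interpolation} to $\|\nabla e_{\mathcal I}\|_{0,E}\lesssim h^s|u|_{s+1,E}$ and noting $\epsilon^{1/2}\|\nabla v_h\|_{0,E}\le\|v_h\|_{\cip,E}$ yields exactly a contribution of the desired form.

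The second piece is the standard VEM consistency/polynomial-consistency term. I would recall that $a_h^E(\uint,v_h) = \int_E \PZ0P\nabla\uint\cdot\PZ0P\nabla v_h + \Stab((I-\PN)\uint,(I-\PN)v_h)$ and insert the polynomial $\Pi^{\nabla,E}_k u$ (or $\Pi^{0,E}_{k-1}\nabla u$) using the usual trick: by polynomial consistency of both $a^E$ and $a_h^E$ on $\Pk_k(E)$, one may subtract $a^E(\Pi^\nabla_k u, v_h)$ and $a_h^E(\Pi^\nabla_k u, v_h)$ freely (they coincide). This reduces everything to terms of the form $\epsilon\int_E(\nabla u - \boldsymbol\Pi^{0,E}_{k-1}\nabla u)\cdot\nabla v_h$, $\epsilon\int_E\boldsymbol\Pi^{0,E}_{k-1}\nabla(\uint-\Pi^\nabla_k u)\cdot\boldsymbol\Pi^{0,E}_{k-1}\nabla v_h$, and the stabilization term $\epsilon\,\Stab((I-\PN)(\uint-\Pi^\nabla_k u),(I-\PN)v_h)$. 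Each factor acting on $\uint - \Pi^\nabla_k u$ is controlled by $|\uint - \Pi^\nabla_k u|_{1,E}\le|\uint - u|_{1,E} + |u - \Pi^\nabla_k u|_{1,E}\lesssim h^s|u|_{s+1,E}$ via Lemma \ref{lm:interpolation} and Lemma \ref{lm:bramble}; each factor acting on $\nabla u$ directly uses the third estimate in Lemma \ref{lm:bramble}, $\|\nabla u - \boldsymbol\Pi^{0,E}_{k-1}\nabla u\|_{0,E}\lesssim h^s|u|_{s+1,E}$; and the stability bound \eqref{eq:sEh} plus an inverse-type estimate handles the $\Stab$ contributions, always pairing with $\epsilon^{1/2}\|\nabla v_h\|_{0,E}\le\|v_h\|_{\cip,E}$. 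The appearance of $\mathcal D(E)$ on the right-hand side is only a matter of bookkeeping: $\uint$ is built from DoFs on a neighbourhood, so interpolation estimates invoke $|u|_{s+1,\mathcal D(E)}$.

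I expect the main (though modest) obstacle to be keeping the $\epsilon$-powers correctly balanced: one must split each $\epsilon$ as $\epsilon^{1/2}\cdot\epsilon^{1/2}$ so that one half attaches to the $v_h$-factor to reconstruct a piece of $\|v_h\|_{\cip,E}$ and the other half multiplies $h^s|u|_{s+1,\mathcal D(E)}$; if instead the full $\epsilon$ were left on the data side one would get $\epsilon\,h^s$ rather than the sharp $\epsilon^{1/2}h^s$. A secondary point is ensuring that the stabilization term $\epsilon\,\Stab((I-\PN)\uint,(I-\PN)v_h)$ does not produce extra powers of $h^{-1}$: by \eqref{eq:sEh} it is bounded by $\epsilon\,\alpha^*|(I-\PN)\uint|_{1,E}|(I-\PN)v_h|_{1,E}$, and since $\PN$ is the $H^1$-seminorm projection, $|(I-\PN)\uint|_{1,E}\le|(I-\PN)(\uint-\Pi^\nabla_k u)|_{1,E}\lesssim|u|_{s+1,E}h^s$, so no loss occurs. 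Once all pieces carry the common factor $\epsilon^{1/2}h^s|u|_{s+1,\mathcal D(E)}\|v_h\|_{\cip,E}$, dividing by $\|v_h\|_{\cip}\ge\|v_h\|_{\cip,E}$ and taking the supremum finishes the proof. This is essentially the argument of the analogous estimate in \cite{BDLV:2021}, adapted to the present norm.
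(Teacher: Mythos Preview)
Your proposal is correct and follows essentially the same approach as the paper, which simply inserts $\PN u$ in one step---writing $a^E(u,v_h)-a_h^E(\uint,v_h)=a^E(u-\PN u,v_h)+a_h^E(\PN u-\uint,v_h)$ via polynomial consistency---and then applies Cauchy--Schwarz together with Lemmas~\ref{lm:bramble} and~\ref{lm:interpolation}; your two-stage decomposition reaches the same bound with the same ingredients. One minor remark: Lemma~\ref{lm:interpolation} is purely local, so the $\mathcal D(E)$ in the statement is not actually forced by the interpolant (and indeed the paper's own proof ends with $|u|_{s+1,E}$).
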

\begin{proof}
This result is proved following the line of Lemma 5.3 of \cite{BDLV:2021}. 
Adding and subtracting $\PN u$, using Cauchy-Schwarz inequality, we obtain
\begin{equation*}
\begin{aligned}
\erra
& =
\epsilon \, \tilde a^E_h(u, v_h) - \epsilon \, a_h^E(u_\mathcal{I}, v_h)
=
\epsilon \, \tilde a^E_h(u - \PN u,v_h) + \epsilon \, a_h^E(\PN u - u_\mathcal{I},v_h) \\
& \leq
\epsilon \, (\| \nabla e_\pi \|_{0,E} + (1 + \alpha^*) \| \nabla (\PN u - u_\mathcal{I}) \|_{0,E}) \, \| \nabla v_h \|_{0,E} \\
&\lesssim
\epsilon \, (\| \nabla e_\pi \|_{0,E} + \| \nabla e_\mathcal{I} \|_{0,E}) \, \| \nabla v_h \|_{0,E}
\lesssim
\epsilon^{1/2} \, h ^\reg \, \vert u \vert_{\reg+1,E} \, \| v_h \|_{\cip,E} \, .
\end{aligned}
\end{equation*}
\end{proof}

\begin{lemma}[Estimate of $\errb$]
\label{lm:errb}
Under the assumptions \textbf{(A1)} and \textbf{(A2)},
the term $\errb$ can be bounded as follows (for $0 < s \le k$)
\begin{equation*}
\errb 
\lesssim 
h^{\reg + 1/2} \| u \|_{\reg+1} 
+
\| \bb \|_{[W^{{\reg+1},\infty}]^2}  h^{\reg+1} \| u \|_{\reg+2,E}  
+ 
\int_{\partial E} (\bb \cdot \nn ^E) \, e_{\mathcal{I}} \, v_h \, {\rm d}s \, .
\end{equation*}
\end{lemma}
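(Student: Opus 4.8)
The plan is to reduce $\errb$ to the estimate of $b^E(u,v_h)-b_h^E(\uint,v_h)$ plus a couple of boundary remainders, exploiting the algebraic structure given by $\dd\bb=0$. Integrating $b^E(v_h,u)$ by parts yields the continuous identity $\bskewE(u,v_h)=b^E(u,v_h)-\tfrac12\int_{\partial E}(\bb\cdot\nn^E)\,u\,v_h\,{\rm d}s$, and the very same integration by parts applied to the volume part of $b_h^E(v_h,\uint)$ (again using $\dd\bb=0$) gives its discrete analogue
\[
\bskewEh(\uint,v_h)=b_h^E(\uint,v_h)-\tfrac12\int_{\partial E}(\bb\cdot\nn^E)\bigl[\uint\,v_h+(I-\P0)\uint\,(I-\P0)v_h\bigr]{\rm d}s .
\]
Subtracting and using $u\,v_h-\uint\,v_h=\eint\,v_h$, I am left with $b^E(u,v_h)-b_h^E(\uint,v_h)$, the term $-\tfrac12\int_{\partial E}(\bb\cdot\nn^E)\,\eint\,v_h\,{\rm d}s$, and $+\tfrac12\int_{\partial E}(\bb\cdot\nn^E)(I-\P0)\uint\,(I-\P0)v_h\,{\rm d}s$.

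Then I manipulate $b^E(u,v_h)-b_h^E(\uint,v_h)=\int_E(\bb\cdot\nabla u)\,v_h-\int_E(\bb\cdot\nabla\P0\uint)\,\P0 v_h-\int_{\partial E}(\bb\cdot\nn^E)(I-\P0)\uint\,v_h$. Inserting $\P0 v_h$ in the first integral splits off $\int_E(\bb\cdot\nabla u)(I-\P0)v_h$ and $\int_E\bb\cdot\nabla(u-\P0\uint)\,\P0 v_h$; on the latter I integrate by parts (using $\dd\bb=0$) to move the gradient onto the polynomial $\P0 v_h$, obtaining $-\int_E(u-\P0\uint)(\bb\cdot\nabla\P0 v_h)$ plus a boundary contribution. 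Combining that boundary contribution with the surface term of $b_h^E$ and with the two remainders above, and adding/subtracting $\P0 v_h$, everything recombines (up to harmless constants) into the four pieces
\[
\int_E (I-\P0)(\bb\cdot\nabla u)\,(I-\P0)v_h\,,\qquad -\int_E(u-\P0\uint)(\bb\cdot\nabla\P0 v_h)\,,
\]
\[
-\int_{\partial E}(\bb\cdot\nn^E)\,\eint\,(I-\P0)v_h\,,\qquad -\tfrac12\int_{\partial E}(\bb\cdot\nn^E)(I-\P0)\uint\,(I-\P0)v_h\,,
\]
plus the surviving term $\tfrac12\int_{\partial E}(\bb\cdot\nn^E)\,\eint\,v_h$, i.e.\ exactly the term kept in the statement (in the first integral I used the $L^2$-orthogonality of $\P0=\Pi^{0,E}_k$ to subtract $\P0(\bb\cdot\nabla u)$). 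This last term is not estimated here because, summing over $E\in\Omega_h$, it telescopes: its two contributions cancel on interior edges and it vanishes on boundary edges since $u=\uint=0$ on $\Gamma$.

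The third step bounds the four pieces by $\|v_h\|_{\cip,E}$. The key auxiliary estimate is $\|(I-\P0)v_h\|_{0,E}\lesssim h_E^{1/2}\,J_h^E(v_h,v_h)^{1/2}\lesssim h_E^{1/2}\|v_h\|_{\cip,E}$, which follows from a Poincar\'e inequality for $(I-\PN)v_h$ (zero boundary mean), the bound $h_E|(I-\PN)v_h|_{1,E}^2\lesssim J_h^E(v_h,v_h)$ coming from the stabilizer part of \eqref{eq:JhE} and \eqref{eq:sEh}, and the identity $(\PN-\P0)v_h=-\P0(I-\PN)v_h$; an Agmon-plus-inverse-estimate argument upgrades it to $\|(I-\P0)v_h\|_{0,\partial E}\lesssim\|v_h\|_{\cip,E}$. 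With these in hand: the first piece follows from Cauchy--Schwarz, Lemma \ref{lm:bramble} on the product $\bb\cdot\nabla u$ (producing the factor $\|\bb\|_{[W^{\reg+1,\infty}(E)]^2}\|u\|_{\reg+2,E}$, cf.\ the statement) and the auxiliary bound; the second from Cauchy--Schwarz, $\|u-\P0\uint\|_{0,E}\lesssim h^{\reg+1}|u|_{\reg+1,E}$ (triangle inequality, Lemmas \ref{lm:bramble} and \ref{lm:interpolation}) and $h^{1/2}\|\bb\cdot\nabla\P0 v_h\|_{0,E}\le\|v_h\|_{\cip,E}$, which yields the $h^{\reg+1/2}$ contribution; the two boundary pieces from Cauchy--Schwarz on $\partial E$, the trace bounds $\|\eint\|_{0,\partial E}+\|(I-\P0)\uint\|_{0,\partial E}\lesssim h^{\reg+1/2}\|u\|_{\reg+1,\mathcal D(E)}$ and the trace version of the auxiliary estimate. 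Collecting the four contributions together with the kept term gives the claim.

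The main obstacle is the algebraic bookkeeping of the second step: one must arrange that every surviving term be controlled by $\|v_h\|_{\cip,E}$ and never by $\|\nabla v_h\|_{0,E}$, which in the convection-dominated regime carries only the weight $\epsilon^{1/2}$ and would destroy quasi-robustness. This forces all gradients to be transferred onto $\P0 v_h$ (so they are absorbed by the $h\,\|\bb\cdot\nabla\P0 v_h\|_{0,E}^2$ term of the norm) and all remaining $(I-\P0)v_h$ factors to be absorbed by the VEM stabilizer hidden in $J_h^E$ rather than by the diffusive term; the one genuinely non-absorbable quantity, $\int_{\partial E}(\bb\cdot\nn^E)\,\eint\,v_h$, is consequently left in the statement for a later global treatment. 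A secondary, more routine difficulty is the non-polynomial advective field: it is handled by inserting piecewise-polynomial approximations of $\bb$ and a product-rule Bramble--Hilbert estimate, which is where the stronger regularity $\bb\in[W^{\reg+1,\infty}]^2$ (and the $H^{\reg+2}$-norm of $u$) enters.
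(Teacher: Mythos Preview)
Your proof is correct and follows essentially the same route as the paper. Both arguments arrive at the same four elementary pieces $\eta_{b,1}^E,\dots,\eta_{b,4}^E$ (your first two volume integrals coincide with the paper's $\eta_{b,1}^E$ and $\eta_{b,2}^E$, and your two boundary integrals are an equivalent regrouping of the paper's $\eta_{b,3}^E+\eta_{b,4}^E$), plus the surviving term $\int_{\partial E}(\bb\cdot\nn^E)\,\eint\,v_h$.

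The only organizational difference is that you first rewrite $\bskewE$ and $\bskewEh$ via the identity ``$b^{\rm skew}=b-\tfrac12(\text{boundary})$'' (exploiting $\dd\bb=0$) and then subtract, whereas the paper keeps the two halves $\errbA,\errbB$ separate and manipulates each; this is purely cosmetic. A more substantive variation is your treatment of the first volume piece: you bound $\|(I-\P0)v_h\|_{0,E}$ by $h^{1/2}J_h^E(v_h,v_h)^{1/2}$, while the paper absorbs it directly into $\sigma^{1/2}\|v_h\|_{0,E}\le\|v_h\|_{\cip,E}$. Your choice yields an extra factor $h^{1/2}$ on that term and, more importantly, does not rely on $\sigma>0$; the paper's choice is the reason the second summand in the statement carries only $h^{\reg+1}$ rather than $h^{\reg+3/2}$. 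Either way the stated inequality follows. One small caveat on your side remark about the leftover boundary term: in the paper's framework Dirichlet conditions are imposed \emph{weakly} via the Nitsche form, so one should not assume $\uint=0$ on $\Gamma$; the paper handles that contribution with the Nitsche part of the $\cip$-norm (cf.\ Proposition~\ref{prp:bfb}).
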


\begin{proof}
Recalling the definition, we need to estimate
\begin{equation*}
\begin{aligned}
\errbA
& \coloneqq 
\bigl( \bb \cdot \nabla u, v_h \bigr)_{0,E}
- 
\bigl( \bb \cdot \nabla \P0 u_\mathcal{I}, \P0 v_h \bigr)_{0,E} 
- 
\int_{\partial E} (\bb \cdot \nn ^E) \, (I - \P0) \, u_\mathcal{I} \, \P0 v_h \, {\rm d}s \, , \\
\errbB & \coloneqq 
\bigl( \P0 u_\mathcal{I}, \bb \cdot \nabla \P0 v_h \bigr)_{0, E}
- 
\bigl( u, \bb \cdot \nabla v_h \bigr)_{0,E} 
+ 
\int_{\partial E} (\bb \cdot \nn ^E) \, (I - \P0) \, v_h \, \P0 u_\mathcal{I} \, {\rm d}s \,  .
\end{aligned}
\end{equation*}
By integration by parts, we have
\begin{equation*}
\begin{aligned}
\eta_{b,A}^E 
& =
\bigl( \bb \cdot \nabla u, (I - \P0) \, v_h \bigr)_{0,E} 
+ 
\bigl( \bb \cdot \nabla (u - \P0 u_\mathcal{I}), \P0 v_h \bigr )_{0,E} \\
& \quad - 
\int_{\partial E} (\bb \cdot \nn^E) (I - \P0) \, u_\mathcal{I} \, \P0 v_h \, {\rm d}s \\
& =
\bigl( \bb \cdot \nabla u, (I - \P0) v_h \bigr)_{0,E} 
-
\bigl( u - \P0 u_\mathcal{I}, \bb \cdot \nabla \P0 v_h \bigr)_{0,E} \\
& \quad +
\int_{\partial E} (\bb \cdot \nn^E) (u - u_\mathcal{I}) \, \P0 v_h \,  {\rm d}s \\
& =
\bigl( (I - \P0) \bb \cdot \nabla u, (I - \P0) v_h \bigr)_{0,E}  
+ 
\bigl( \P0 u_\mathcal{I} - u, \bb \cdot \nabla\P0 v_h \bigr)_{0,E} \\
& \quad +
\int_{\partial E} (\bb \cdot \nn^E) \, e_\mathcal{I} \, \P0 v_h \, {\rm d}s \\
& \eqqcolon  
\eta_{b,1}^E + \eta_{b,2}^E + \eta_{b,3}^E \, ,
\end{aligned}
\end{equation*}
and 
\begin{equation*}
\begin{aligned}
\eta_{b,B}^E 
& =
\bigl( \P0 u_\mathcal{I} - u, \bb \cdot \nabla \P0 v_h \bigr)_{0,E} 
-
\bigl( u, \bb \cdot \nabla (I - \P0) v_h \bigr)_{0,E} \\
& \quad +
\int_{\partial E} (\bb \cdot \nn^E) \, (I - \P0) \, v_h \, \P0 u_\mathcal{I} \, {\rm d}s \\
& = 
\bigl( \P0 u_\mathcal{I} - u, \bb \cdot \nabla \P0 v_h \bigr)_{0,E} 
+
\bigl( \bb \cdot \nabla u, (I - \P0) v_h \bigr)_{0,E} \\
& \quad +
\int_{\partial E} (\bb \cdot \nn^E) \, (I - \P0) \, v_h \, (\P0 u_\mathcal{I} - u) \, {\rm d} s \\
& =
\bigl(\P0 u_\mathcal{I} - u, \bb \cdot \nabla \P0 v_h\bigr)_{0,E} 
+
\bigl((I - \P0)\bb \cdot \nabla u,(I - \P0)v_h\bigr)_{0,E}  \\
& \quad +
\int_{\partial E} (\bb \cdot \nn^E) \, (I - \P0) \, v_h \, (\P0 u_\mathcal{I} - u) \, {\rm d} s \\
& \eqqcolon 
\eta_{b,2}^E + \eta_{b,1}^E + \eta_{b,4}^E \, .
\end{aligned}
\end{equation*}
yielding the following expression for 
$\eta_{b}^E$
\begin{equation} \label{E: sumb}
2 \eta_{b}^E = 2\eta_{b,1}^E + 2\eta_{b,2}^E + \eta_{b,3}^E + \eta_{b,4}^E \, .
\end{equation}
We now analyze each term in the sum above.

\noindent
$\bullet \,\, \eta_{b,1}^E$: using Cauchy-Schwarz, the continuity in $\P0$ in $L^2$ and standard estimates, we obtain 
\begin{equation*}
\begin{aligned}
\eta_{b,1}^E 
& =
\bigl( (I - \P0) \bb \cdot \nabla u, (I - \P0) v_h \bigr)_{0,E} \\
& \leq
\| (I-\P0) \bb \cdot \nabla u\|_{0,E} \,
\| v_h \|_{0,E} \\
&\leq
\| (I - \P0) \bb \cdot \nabla u \|_{0,E} \, \| v_h \|_{\cip,E} \\
&\lesssim
h^{\reg+1} \vert \bb \cdot \nabla u \vert_{\reg+1,E}\| v_h \|_{\cip,E}  \\
&\lesssim
h^{\reg+1} \, \| u \|_{\reg+1,E} \,
\| \beta \|_{[W^{\reg+1}_\infty(E)]^2} \, \| v_h \|_{\cip,E} .
\end{aligned}
\end{equation*}
\noindent
$\bullet \,\, \eta_{b,2}^E$: we have that
\begin{equation*}
%
%
%
%
%
\begin{aligned}
\eta_{b,2}^E 
& =
\bigl( \P0 u_\mathcal{I} - u, \bb \cdot \nabla \P0 v_h \bigr)_{0,E} \\
& \leq
\| \P0 u_\mathcal{I} - u \|_{0,E} \,  \| \bb \cdot \nabla \P0 v_h \|_{0,E} \\
& \leq 
\bigl( \| (I - \P0)u \|_{0,E} + \| e_{\mathcal{I}} \|_{0,E} \bigr) 
\| \bb \cdot \nabla \P0 v_h \|_{0,E} \\
& \lesssim h^{\reg + 1/2} \| u \|_{\reg+1} \| v_h \|_{\cip,E} \, .  
\end{aligned}
\end{equation*}


\noindent
$\bullet \,\, {\eta^E_{b,3} + \eta^E_{b,4}}$: we use a scaled trace inequality making use of the scaled norm
\begin{equation*}
\forall v \in H^1(E), \quad \tri v \tri_{1,E}^2 \coloneqq \| v \|_{L^2(E)}^2 + h^2_E \, \vert v \vert^2_{H^1(E)} \, .
\end{equation*}
We obtain
\begin{equation*}
\begin{aligned}
\eta^E_{b,3} + \eta^E_{b,4}
& =
\int_{\partial E} (\bb \cdot \nn^E) \, e_\mathcal{I} \, \P0 v_h \, {\rm d} s
+
\int_{\partial E} (\bb \cdot \nn^E) \, (I - \P0) \, v_h \, (\P0 u_\mathcal{I} - u) \, {\rm d} s \\
& = 
\int_{\partial E} (\bb \cdot \nn^E) \, (\P0 - I) \, v_h \, (e_\mathcal{I} + u - \P0 u_\mathcal{I}) \, {\rm d} s
+
\int_{\partial E} (\bb \cdot \nn^E) \, e_\mathcal{I} \, v_h \, {\rm d} s\\
& \lesssim
   \, (
\| e_\mathcal{I} \|_{L^2(\partial E)}
+
\| u - \P0 u_\mathcal{I} \|_{L^2(\partial E)}
)  \, 
\| (I - \P0)v_h\|_{L^2(\partial E)} \\
&  \quad +
\int_{\partial E} (\bb \cdot \nn^E) \, e_\mathcal{I} \, v_h \, {\rm d} s\\
& \lesssim
   \, h_E^{-1} \, (
\tri e_\mathcal{I} \tri_{1,E}
+
\tri u - \P0 u_\mathcal{I} \tri_{1,E}
)  \, 
\| (I - \P0)v_h\|_{0,E} \\
& \quad +
\int_{\partial E} (\bb \cdot \nn^E) \, e_\mathcal{I} \, v_h {\rm d} s\\
& 
\lesssim
   h^{-1/2} (
\tri e_\mathcal{I} \tri_{1,E}
+
\tri u - \P0 u_\mathcal{I} \tri_{1,E}
)  \, 
h^{1/2} \| \nabla(I - \PN)v_h\|_{0,E} 
+
\int_{\partial E} (\bb \cdot \nn^E) \, e_\mathcal{I} \, v_h {\rm d} s \\
& 
\lesssim
{  } h^{\reg+1/2} \vert u \vert_{\reg+1,E} \| v_h \|_{\cip,E} + \int_{\partial E} (\bb \cdot \nn^E) \, e_\mathcal{I} \, v_h \, {\rm d} s \, ,
\end{aligned}
\end{equation*}
where in the last step we used the $J_h(v_h,v_h)$ term in the definition of $\| v_h \|_{\cip,E}$.

The thesis now follows gathering the last three inequalities in \eqref{E: sumb}.
\end{proof}

\begin{lemma}[Estimate of $\errc$]
\label{lm:errc}
Under the assumptions \textbf{(A1)} and \textbf{(A2)},
the term $\errc$ can be bounded as follows (for $0 < s \le k$)
\begin{equation*}
\errc 
\lesssim 
 h^{\reg + 1} \, \vert u \vert_{\reg+1,E}  \,
\end{equation*}
\end{lemma}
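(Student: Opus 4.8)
The plan is to estimate $\errc = \|c^E(u,\cdot) - c_h^E(\uint,\cdot)\|_\cipdual$ by bounding $c^E(u,v_h) - c_h^E(\uint,v_h)$ for an arbitrary $v_h \in V_h(\Omega_h)$ and then dividing by $\|v_h\|_{\cip,E}$. Recalling the definitions, $c^E(u,v_h) = \int_E u\,v_h\,{\rm d}E$ while $c_h^E(\uint,v_h) = \int_E \P0\uint\,\P0 v_h\,{\rm d}E + |E|\,\Stab((I-\P0)\uint,(I-\P0)v_h)$. The natural first step is to split the error into a consistency part and a stabilization part: write
\[
c^E(u,v_h) - c_h^E(\uint,v_h) = \bigl(u\,v_h - \P0\uint\,\P0 v_h\bigr)_{0,E} - |E|\,\Stab\bigl((I-\P0)\uint,(I-\P0)v_h\bigr).
\]

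For the first (integral) term, I would insert $\P0 u$ and manipulate using the $L^2$-orthogonality of $\P0$: since $\P0 v_h$ is a polynomial, $(u - \P0 u, \P0 v_h)_{0,E} = 0$, hence $(u\,v_h)_{0,E} = (u, (I-\P0)v_h)_{0,E} + (\P0 u, \P0 v_h)_{0,E} = ((I-\P0)u,(I-\P0)v_h)_{0,E} + (\P0 u,\P0 v_h)_{0,E}$. Then
\[
(u\,v_h - \P0\uint\,\P0 v_h)_{0,E} = ((I-\P0)u,(I-\P0)v_h)_{0,E} + (\P0(u - \uint),\P0 v_h)_{0,E}.
\]
The first piece is controlled by $\|(I-\P0)u\|_{0,E}\|v_h\|_{0,E} \lesssim h^{s+1}|u|_{s+1,E}\|v_h\|_{0,E}$ via Lemma~\ref{lm:bramble}; the second by $\|\eint\|_{0,E}\|v_h\|_{0,E} \lesssim h^{s+1}|u|_{s+1,E}\|v_h\|_{0,E}$ via Lemma~\ref{lm:interpolation} (after using the $L^2$-stability of $\P0$). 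Both then absorb $\|v_h\|_{0,E} \leq \sigma^{-1/2}\|v_h\|_{\cip,E}$, but since the paper's estimates in this section seem to drop the $\sigma^{-1/2}$ factor (cf.\ the blue placeholders in Lemma~\ref{lm:epi}), one simply writes $\|v_h\|_{0,E}\lesssim\|v_h\|_{\cip,E}$, giving the desired $h^{s+1}|u|_{s+1,E}$ bound.

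For the stabilization term, I would use the continuity bound in \eqref{eq:sEh} for $\Stab$ on $\mathrm{Ker}(\PN)$, together with the observation that $(I-\P0)\uint$ and $(I-\P0)v_h$ need to be handled; the standard trick is $|E|\,\Stab((I-\P0)w,(I-\P0)w) \lesssim |E|\,|(I-\P0)w|_{1,E}^2 \lesssim h_E^2|(I-\P0)w|_{1,E}^2$, so Cauchy--Schwarz gives $|E|\,\Stab((I-\P0)\uint,(I-\P0)v_h) \lesssim h_E|(I-\P0)\uint|_{1,E}\cdot h_E|(I-\P0)v_h|_{1,E}$. For $\uint$, one inserts a polynomial (e.g.\ $\Pi^{0,E}_k u$ or $\PN u$) and uses the triangle inequality plus Lemma~\ref{lm:bramble} and Lemma~\ref{lm:interpolation} to get $h_E|(I-\P0)\uint|_{1,E} \lesssim h_E^{s+1}|u|_{s+1,E}$; for $v_h$, one uses an inverse estimate (Lemma~\ref{lm:inverse}) to write $h_E|(I-\P0)v_h|_{1,E} \lesssim \|v_h\|_{0,E} \lesssim \|v_h\|_{\cip,E}$, or alternatively $h_E|(I-\P0)v_h|_{1,E}\lesssim h_E|v_h|_{1,E}\lesssim\|v_h\|_{0,E}$. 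Collecting both contributions yields $\errc \lesssim h^{s+1}|u|_{s+1,E}$.

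I do not expect a genuine obstacle here — the result is one of the more routine consistency estimates, structurally parallel to Lemma~\ref{lm:errF} but with the added stabilization term. The only point requiring a little care is the bookkeeping for the stabilization term: making sure the powers of $h_E$ coming from $|E|^{1/2}\approx h_E$ combine correctly with the inverse estimate on $v_h$ and the interpolation estimate on $\uint$, and being consistent with the paper's convention of suppressing lower-order $\sigma$-dependent constants when passing to $\|\cdot\|_{\cip,E}$. One should also double-check that using $\Pi^{0,E}_k$ rather than $\PN$ as the intermediate polynomial is legitimate inside $\mathrm{Ker}(\PN)$-type bounds; if not, inserting $\PN u$ and invoking \eqref{eq:sEh} directly is the safer route.
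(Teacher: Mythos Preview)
Your approach is correct and reaches the same bound. The paper's proof is slightly more compact: mirroring the argument for $\erra$, it inserts $\P0 u$ and uses the polynomial consistency $c_h^E(\P0 u,\cdot)=c^E(\P0 u,\cdot)$ to write
\[
c^E(u,v_h)-c_h^E(\uint,v_h)=c^E(u-\P0 u,v_h)+c_h^E(\P0 u-\uint,v_h),
\]
then invokes the $L^2$-continuity of both forms (producing the factor $(1+\alpha^*)$) together with Lemmas~\ref{lm:bramble} and~\ref{lm:interpolation}. Your explicit separation of the integral and stabilization pieces is the same computation unpacked: since $(I-\P0)\P0 u=0$, the stabilization contribution hidden in the paper's $c_h^E(\P0 u-\uint,v_h)$ is precisely your term $-|E|\,\Stab\bigl((I-\P0)\uint,(I-\P0)v_h\bigr)$. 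The kernel concern you flag is real for both routes---bounding that stabilization term by a constant times $\|\P0 u-\uint\|_{0,E}\|v_h\|_{0,E}$ is not an immediate consequence of \eqref{eq:sEh} on $\mathrm{Ker}(\PN)$, but rests on the standard $L^2$-stability of the chosen (dofi-dofi) $\Stab$ on $V_h(E)$, which the paper uses implicitly.
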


\begin{proof}
Similarly to Lemma \ref{lm:erra}, we have that
\begin{equation*}
\begin{aligned}
\errc
&=
\tilde c^E_h(u,v_h) -  c_h^E(u_\mathcal{I},v_h)
=
 \tilde c^E_h(u - \P0 u,v_h) + c_h^E(\P0 u - u_\mathcal{I},v_h) \\
&\leq
 (\| e_\pi \|_{0,E} + (1 + \alpha^*) \| \P0 u - u_\mathcal{I}\|_{0,E}) \, \| v_h \|_{0,E}\\
&\lesssim
 (\|  e_\pi \|_{0,E} + \|  e_\mathcal{I} \|_{0,E}) \, \| v_h \|_{0,E}
\lesssim
 h^{\reg + 1} \, \vert u \vert_{\reg+1,E} \, \| v_h \|_{\cip,E} \, .
\end{aligned}
\end{equation*}

\end{proof}

\begin{lemma}[Estimate of $\errN$]
\label{lm:errN}
Under the assumptions \textbf{(A1)} and \textbf{(A2)},
the term $\errN$ can be bounded as follows (for $0 < s \le k$)
\begin{equation*}
\errN
\lesssim 
(\epsilon^{1/2} h^s  +  h^{s+1/2} )\vert u \vert_{s+1,E} \, .
\end{equation*}

\begin{proof}
By definition of the two bilinear forms, we have that
\[
\begin{split}
\errN 
&=
- \epsilon \langle \nabla u \cdot \nn ,  v_h \rangle_{\Gamma_E} + \epsilon  \langle \nabla \PN u_{\mathcal{I}} \cdot \nn , v_h  \rangle_{\Gamma_E} \\
& + 
\dfrac{\epsilon}{\delta h_E} \langle u, v_h \rangle_{\Gamma_E} 
- \dfrac{\epsilon}{\delta h_E} \langle u_{\mathcal{I}}, v_h \rangle_{\Gamma_E} 
- \dfrac{1}{2} \langle |\bb \cdot \nn| u, v_h \rangle_{\Gamma_E} 
+ \dfrac{1}{2} \langle |\bb \cdot \nn| u_{\mathcal{I}}, v_h \rangle_{\Gamma_E} \\
& \eqqcolon
\eNa + \eNb + \eNc  \, .
\end{split}
\]
Now, we estimate each of the three terms. Using trace inequality, the first returns 
\[
\begin{split}
\eNa 
&=
- \epsilon \langle \nabla u \cdot \nn ,  v_h \rangle_{\Gamma_E} + \epsilon  \langle \nabla \PN u_{\mathcal{I}} \cdot \nn , v_h  \rangle_{\Gamma_E}  \\
&=
- \epsilon \langle \nabla (u - \nabla \PN u_{\mathcal{I}}) \cdot \nn , v_h \rangle_{\Gamma_E} \\
&\lesssim 
\epsilon ( h^{-1/2} \Vert \nabla u - \nabla \PN u_{\mathcal{I}} \Vert_{0,E} + h^{1/2} | \nabla u - \nabla \PN u_{\mathcal{I}} |_{1,E} ) \Vert v_h \Vert_{\Gamma_E} \\
&\lesssim
\epsilon^{1/2} (\Vert \nabla u - \nabla \PN u_{\mathcal{I}} \Vert_{0,E} +  h | \nabla u - \nabla \PN u_{\mathcal{I}} |_{1,E} ) \Vert v_h \Vert_{\text{cip},E} \, .
\end{split}
\]
Adding and subtracting $\PN u$, using triangular inequality and Lemma \ref{lm:bramble}, we obtain
\[
\eNa \lesssim \epsilon^{1/2} h^s \vert u \vert_{s+1,E} \Vert v_h \Vert_{\text{cip},E} \, .
\]
For the second term, using trace inequality and interpolation estimate, we have that
\[
\begin{split}
\eNb 
&=
\dfrac{\epsilon}{\delta h} \langle u, v_h \rangle_{\Gamma_E} - \dfrac{\epsilon}{\delta h_E} \langle u_{\mathcal{I}}, v_h \rangle_{\Gamma_E} \\
& \lesssim
\dfrac{\epsilon}{\delta h} \Vert u - u_{\mathcal{I}}\Vert_{\Gamma_E} \Vert \Vert v_h \Vert_{\Gamma_E} \lesssim
\left( \dfrac{\epsilon}{\delta h} \right)^{1/2} \Vert u - u_{\mathcal{I}}\Vert_{\Gamma_E} \Vert \Vert v_h \Vert_{\cip,E} \\
&\lesssim
\epsilon^{1/2} h^s \vert u \vert_{s+1,E} \Vert v_h \Vert_{\text{cip},E} \, .
\end{split}
\]
Finally, the last one is treated in a very similar way with respect to the previous one, it gives
\begin{equation}\label{eq:boundarycontr}
\begin{split}
\eNc 
&=
-\dfrac{1}{2} \langle |\bb \cdot \nn| u, v_h \rangle_{\Gamma_E} - \dfrac{1}{2} \langle |\bb \cdot \nn| u_{\mathcal{I}}, v_h \rangle_{\Gamma_E} \\
&\lesssim
 h^{-1/2} \Vert u - u_{\mathcal{I}} \Vert_{0,E} \Vert v_h \Vert_{\cip,E} \\
& \lesssim  h^{s+1/2} | u |_{s+1,E} \Vert v_h \Vert_{\cip,E} \, .
\end{split}
\end{equation}
\end{proof}
\end{lemma}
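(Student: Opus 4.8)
The plan is to expand the two Nitsche forms term by term and match each piece against the boundary part of $\|\cdot\|_{\cip,E}$. Subtracting, one has $\widetilde{\mathcal{N}}_h^E(u,v_h)-\mathcal{N}_h^E(\uint,v_h)=\eNa+\eNb+\eNc$ with
\[
\begin{aligned}
\eNa &= -\epsilon\langle(\nabla u-\nabla\PN\uint)\cdot\nn^E,v_h\rangle_{\Gamma_E},\\
\eNb &= \frac{\epsilon}{\delta h_E}\langle u-\uint,v_h\rangle_{\Gamma_E},\\
\eNc &= \frac12\langle|\bb\cdot\nn|(u-\uint),v_h\rangle_{\Gamma_E}.
\end{aligned}
\]
The key remark is that the term $\|\xi(\epsilon,\bb)v_h\|^2_{0,\Gamma_E}$ appearing in \eqref{eq:loc_nom_def} equals $\frac{\epsilon}{\delta h}\|v_h\|_{0,\Gamma_E}^2+\frac12\|\,|\bb\cdot\nn|^{1/2}v_h\|_{0,\Gamma_E}^2$, so that both $(\epsilon/\delta h)^{1/2}\|v_h\|_{0,\Gamma_E}$ and $\|\,|\bb\cdot\nn|^{1/2}v_h\|_{0,\Gamma_E}$ are controlled, up to the fixed constant $\delta$, by $\|v_h\|_{\cip,E}$.

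For $\eNb$ and $\eNc$ the argument is short: a Cauchy--Schwarz inequality on $\Gamma_E$ together with the scaling $\bbO=1$ of \eqref{eq:beta-scaling} bounds $\eNb$ by $(\epsilon/\delta h)^{1/2}\|u-\uint\|_{0,\Gamma_E}$ times $(\epsilon/\delta h)^{1/2}\|v_h\|_{0,\Gamma_E}$, and $\eNc$ by $\|u-\uint\|_{0,\Gamma_E}$ times $\|\,|\bb\cdot\nn|^{1/2}v_h\|_{0,\Gamma_E}$. The remaining factor $\|u-\uint\|_{0,\Gamma_E}$ is then estimated through the scaled trace inequality $\|w\|_{0,\partial E}^2\lesssim h^{-1}\|w\|_{0,E}^2+h|w|_{1,E}^2$ and Lemma \ref{lm:interpolation}, which give $\|u-\uint\|_{0,\Gamma_E}\lesssim h^{s+1/2}|u|_{s+1,E}$. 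Hence $\eNb\lesssim\epsilon^{1/2}h^{s}|u|_{s+1,E}\|v_h\|_{\cip,E}$, the weight $(\epsilon/\delta h)^{1/2}$ converting one factor $h^{1/2}$ into $\epsilon^{1/2}$, and $\eNc\lesssim h^{s+1/2}|u|_{s+1,E}\|v_h\|_{\cip,E}$.

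The delicate term is $\eNa$, which involves the trace of $\nabla u$ together with the trace of the polynomial gradient $\nabla\PN\uint$. After Cauchy--Schwarz I would write $\|v_h\|_{0,\Gamma_E}\le(\delta h/\epsilon)^{1/2}\|\xi(\epsilon,\bb)v_h\|_{0,\Gamma_E}$, turning the prefactor $\epsilon$ into $\epsilon^{1/2}h^{1/2}\|v_h\|_{\cip,E}$ multiplying $\|\nabla u-\nabla\PN\uint\|_{0,\Gamma_E}$; it then suffices to prove $h^{1/2}\|\nabla u-\nabla\PN\uint\|_{0,\Gamma_E}\lesssim h^{s}|u|_{s+1,E}$. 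Applying the scaled trace inequality once more reduces this to estimating $\|\nabla u-\nabla\PN\uint\|_{0,E}$ and $h|\nabla u-\nabla\PN\uint|_{1,E}$. I would add and subtract $\PN u$: the part $\nabla u-\nabla\PN u$ is controlled directly by Lemma \ref{lm:bramble} (with $m=1$ and $m=2$, which uses $u\in H^{s+1}(\Omega_h)$, $s\ge1$, from \textbf{(A2)}), while the part $\nabla\PN(u-\uint)$ is handled by the $H^1$-seminorm stability of $\PN$ followed by a polynomial inverse estimate trading the $H^1$-seminorm for the $L^2$-norm, and then Lemma \ref{lm:interpolation}; both pieces are $O(h^{s}|u|_{s+1,E})$, so that $\eNa\lesssim\epsilon^{1/2}h^{s}|u|_{s+1,E}\|v_h\|_{\cip,E}$.

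Summing the three contributions gives $\errN\lesssim(\epsilon^{1/2}h^{s}+h^{s+1/2})|u|_{s+1,E}$. The main obstacle is precisely the gradient--trace estimate in $\eNa$: one must not bound the traces of $\nabla u$ and of $\nabla\PN\uint$ against different quantities, but pass through the intermediate polynomial $\PN u$ so that Lemma \ref{lm:bramble} and Lemma \ref{lm:interpolation} apply uniformly to both halves; everything else is routine use of Cauchy--Schwarz combined with trace and inverse inequalities.
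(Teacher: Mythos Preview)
Your proposal is correct and follows essentially the same route as the paper: the same three-term decomposition $\eNa+\eNb+\eNc$, the same use of the $\xi(\epsilon,\bb)$ weight in $\|\cdot\|_{\cip,E}$ to absorb the boundary factors of $v_h$, the scaled trace inequality on $u-\uint$ for $\eNb,\eNc$, and the add/subtract $\PN u$ trick combined with Lemma~\ref{lm:bramble} and Lemma~\ref{lm:interpolation} for the gradient term $\eNa$. Your explicit mention of the polynomial inverse estimate for $|\nabla\PN(u-\uint)|_{1,E}$ and of the implicit regularity requirement $s\ge 1$ is in fact slightly more careful than the paper's own write-up.
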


\begin{lemma}[Estimate of $\errJ$]
\label{lm:errJ}
Under the assumptions \textbf{(A1)} and \textbf{(A2)},
the term $\errJ$ can be bounded as follows (for $0 < s \le k$)
\begin{equation*}
\errJ 
\lesssim 
h^{s + 1/2} \, | u |_{s+1,E} \, .
\end{equation*}
\end{lemma}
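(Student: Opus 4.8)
The plan is to exploit that $J_h^E(\cdot,\cdot)$ is a symmetric positive semidefinite bilinear form. First I would apply the Cauchy--Schwarz inequality to write, for any $v_h\in V_h(\Omega_h)$,
\[
J_h^E(\uint,v_h)\le J_h^E(\uint,\uint)^{1/2}\,J_h^E(v_h,v_h)^{1/2}\le J_h^E(\uint,\uint)^{1/2}\,\|v_h\|_{\cip}\,,
\]
the last step following from $J_h^E(v_h,v_h)\le J_h(v_h,v_h)\le\|v_h\|_{\cip}^2$, which is immediate from \eqref{eq:loc_nom_def}--\eqref{eq:glb_norm_def}. This reduces the whole estimate to showing $J_h^E(\uint,\uint)\lesssim h^{2s+1}\,|u|^2_{s+1,\mathcal D(E)}$, i.e. to bounding separately the two contributions in \eqref{eq:JhE}: the gradient-jump sum over $\partial E$ and the VEM stabilization term $\gamma\, h_E\,\Stab\bigl((I-\PN)\uint,(I-\PN)\uint\bigr)$.

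For the stabilization term I would use that $(I-\PN)\uint\in\mathrm{Ker}(\PN)$, so that \eqref{eq:sEh} gives $\Stab\bigl((I-\PN)\uint,(I-\PN)\uint\bigr)\lesssim|(I-\PN)\uint|_{1,E}^2$. Then, writing $\uint=u-\eint$ and using that $\PN$ is a projection and is stable in the $H^1$-seminorm, I would bound $|(I-\PN)\uint|_{1,E}\le|u-\PN u|_{1,E}+|\eint|_{1,E}\lesssim h^{s}|u|_{s+1,E}$ by Lemma \ref{lm:bramble} and Lemma \ref{lm:interpolation}. This yields $h_E\,\Stab(\cdot,\cdot)\lesssim h^{2s+1}|u|^2_{s+1,E}$, as required.

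The gradient-jump part is slightly more delicate. On an interior edge $e\subset\partial E$ shared with a neighboring element $E'$ (boundary edges contribute nothing), I would use that $u$ is regular enough for the trace of $\nabla u$ to be single valued on $e$ — the same assumption underlying the consistency identity of Section \ref{ss:consistency}, in particular $\tilde J_h^E(u,\cdot)\equiv 0$ — so that $[\nabla u]=0$ and hence $[\nabla\Pg\uint]=[\nabla(\Pg\uint-u)]$, giving $\|[\nabla\Pg\uint]\|_{0,e}\lesssim\sum_{K\in\{E,E'\}}\bigl\|\nabla(\Pg\uint-u)|_K\bigr\|_{0,e}$. On each such $K$ I would split $\Pg\uint-u=-(I-\Pg)u-\Pg\eint$: for the first term a standard trace/approximation estimate (via Lemma \ref{lm:bramble}) gives $\|\nabla(I-\Pg)u\|_{0,e}\lesssim h^{s-1/2}|u|_{s+1,K}$, while for the second term, since $\Pg\eint$ is a polynomial, an inverse estimate, the $L^2$-stability of $\Pg$ and Lemma \ref{lm:interpolation} give $\|\nabla\Pg\eint\|_{0,e}\lesssim h^{-1/2}\|\nabla\Pg\eint\|_{0,K}\lesssim h^{-3/2}\|\eint\|_{0,K}\lesssim h^{s-1/2}|u|_{s+1,K}$. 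Multiplying by $h_e^2$ and summing over the uniformly bounded (by \textbf{(A1)}) number of edges of $E$ then yields $\sum_{e\subset\partial E}\tfrac{\gamma}{2}\int_e h_e^2\,[\nabla\Pg\uint]\cdot[\nabla\Pg\uint]\,{\rm d}s\lesssim h^{2s+1}|u|^2_{s+1,\mathcal D(E)}$; combined with the stabilization bound this closes the estimate for $J_h^E(\uint,\uint)$, and taking square roots gives $\errJ\lesssim h^{s+1/2}|u|_{s+1,\mathcal D(E)}$ (the neighborhood $\mathcal D(E)$, rather than $E$, enters only through the second element adjacent to each interior edge).

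I expect the main obstacle to be purely the bookkeeping of the powers of $h$ in the gradient-jump term: the weight $h_e^2$ must exactly absorb one factor $h^{-1}$ coming from the trace inequality and an additional factor $h^{-1}$ coming from the inverse estimate applied to $\nabla\Pg\eint$, leaving precisely $h^{2s+1}$. This balance works only because one first subtracts $\nabla u$, using $[\nabla u]=0$, so that every term is measured against the exact solution and retains the full approximation order $h^{s+1}$ before the two gradient/trace losses.
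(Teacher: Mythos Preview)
Your proof is correct and follows essentially the same approach as the paper: both apply Cauchy--Schwarz to reduce to bounding $J_h^E(\uint,\uint)$, treat the stabilization piece via $|(I-\PN)\uint|_{1,E}\le |u-\PN u|_{1,E}+|\eint|_{1,E}$, and for the jump term insert $\nabla u$ (using $[\nabla u]=0$) and then split $\Pg\uint-u$ into $(I-\Pg)u$ and $\Pg\eint$. The only cosmetic difference is that the paper applies a single Agmon trace inequality to $\nabla(\Pg\uint-u)$ before splitting, whereas you split first and use a polynomial inverse trace on the $\Pg\eint$ part; the resulting powers of $h$ coincide. Your remark that the final bound naturally involves $|u|_{s+1,\mathcal D(E)}$ rather than $|u|_{s+1,E}$ is also what the paper's own proof actually obtains.
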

\begin{proof} Using Cauchy-Schwarz inequality, we have that
\begin{equation}
\begin{aligned}
J^E_h(u_{\mathcal{I}}, v_h) 
& \leq 
J^E_h(u_{\mathcal{I}}, u_{\mathcal{I}})^{1/2} J^E_h(v_h, v_h)^{1/2} \\
& \leq
J^E_h(u_{\mathcal{I}}, u_{\mathcal{I}})^{1/2} \| v_h \|_{\cip,E} \, .
\end{aligned}
\end{equation}
Since the solution $u$ is sufficiently smooth, we have that
\begin{equation*}
\begin{aligned}
J^E_h(u_{\mathcal{I}}, v_h)
& = 
\sum_{e \subset \partial E}\int_{e} \dfrac{\gamma}{2} \, h^2_e \, [\nabla \Pg  u_\mathcal{I}]\cdot[\nabla \Pg  u_\mathcal{I}]
+
h_E \, \gamma \, \mathcal{S}_j^E( ( I - \PN) u_\mathcal{I},( I - \PN) u_\mathcal{I})\\
& = 
\sum_{e \subset \partial E} \int_{e} \dfrac{\gamma}{2} \, h^2_e \, [\nabla (\Pg  u_\mathcal{I} -  u)]^2 
+
h_E \, \gamma \, \mathcal{S}_j^E( ( I - \PN) u_\mathcal{I},( I - \PN) v_h)\\
& \lesssim
 \sum_{E' \in \mathcal D(E)}  h^2 \, \| \nabla (\Pg  u_\mathcal{I} -  u) \|_{0,\partial E'}^2 
+   h \, | ( I - \PN) u_\mathcal{I} |_{1,E}^2 \, .
\end{aligned}
\end{equation*}
Using trace inequality, we obtain for the first term
\begin{equation*}
\begin{aligned}
\| \nabla \Pg  u_\mathcal{I} - \nabla u \|_{0,\partial E'}
&\lesssim
\bigl( h^{-1} \, \| \nabla \Pg  u_\mathcal{I} - \nabla u \|^2_{0,E'} + h \, \vert \nabla \Pg  u_\mathcal{I} - \nabla u \vert^2_{1,E'} \bigr)^{1/2} \, .
\end{aligned}
\end{equation*}
Adding and subtracting $\nabla \P0  u$, using Lemma \ref{lm:bramble} and interpolation estimate, we obtain
\begin{equation*}
\begin{aligned}
h^{-1/2} \| \nabla \P0  u_\mathcal{I} - \nabla u\|_{0,E'}
&\lesssim
h^{-1/2} \| \nabla \P0  u - \nabla u \|_{0,E'} + h^{-1/2}\| \nabla \P0  u_\mathcal{I} - \nabla \P0  u \|_{0,E'} \\
&\lesssim
h^{\reg - 1/2} \vert u \vert_{\reg + 1 ,E'} \, , 
\end{aligned}
\end{equation*}
and similarly, we have that
\begin{equation*}
\begin{aligned}
h^{1/2} \vert \nabla \P0  u_\mathcal{I} - \nabla u\vert_{1,E'}
&\lesssim
h^{\reg - 1/2} \vert u \vert_{\reg + 1 ,E'} \, .
\end{aligned}
\end{equation*}
Using Lemma \ref{lm:bramble}, we have that
\begin{equation*}
\begin{aligned}
 h^{1/2} \, | ( I - \PN) u_\mathcal{I} |_{1,E}
& \lesssim
 h^{1/2} \, ( \| \nabla \eint \|_{0,E} + \| \nabla \epi \|_{0,E} ) 
\lesssim
h^{\reg + 1/2} | u |_{\reg + 1, E} \, .
\end{aligned}
\end{equation*}
We conclude
\begin{equation*}
J^E_h(u_{\mathcal{I}}, v_h) 
\lesssim 
 h^{s + 1/2} \, | u |_{s+1,\mathcal D(E)} \, \| v_h \|_{\cip, \mathcal D(E)} \, .
\end{equation*}
\end{proof}

We thus have the following proposition.

\begin{proposition}
\label{prp:bfb}
Under the assumptions \textbf{(A1)} and \textbf{(A2)}, let $u \in V$ be the
solution of equation \eqref{eq:problem-c} and $u_h \in V_h(\Omega_h)$ be the solution of equation \eqref{eq:cip-vem}. 
Then it holds that
\[
\begin{aligned}
\|u - u_h\|^2_{\cip} \lesssim
\sum_{E \in \Omega_h} \Theta^E
\left(
\epsilon \, h^{2\reg} 
+   
h^{2\reg+1} 
\right) \, ,
\end{aligned}
\]
where the constant $\Theta^E$ depends on
$\|u\|_{s+2,E}$, $\|f\|_{s+1,E}$, $\|\beta\|_{[W^{s+1}_{\infty}(E)]^2}$.
\end{proposition}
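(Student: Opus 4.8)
The plan is to feed the local estimates of Lemma \ref{lm:epi} (for $\|\eint\|_\cip$) and of Lemmas \ref{lm:errF}, \ref{lm:erra}, \ref{lm:errb}, \ref{lm:errc}, \ref{lm:errN} and \ref{lm:errJ} (for the six consistency residuals) into the abstract bound of Proposition \ref{prp:abstract}, and then to square the resulting inequality and sum it over $\Omega_h$. More precisely, I would go back to the chain used to prove Proposition \ref{prp:abstract} (inf-sup condition of Proposition \ref{prp:inf-sup} plus the consistency identity \eqref{eq:consistency-1}): what it actually delivers is
\[
\|u-u_h\|_\cip \leq \|\eint\|_\cip + \|e_h\|_\cip, \qquad
\|e_h\|_\cip \lesssim \sup_{v_h \ne 0}\frac{1}{\|v_h\|_\cip}\sum_{E\in\Omega_h}\mathcal R^E(v_h),
\]
where $\mathcal R^E(v_h):=\tilde{\mathcal F}^E(v_h)-\mathcal F_h^E(v_h)+\AtcipE(u,v_h)-\AcipE(\uint,v_h)$ splits into the load, diffusion, convection, reaction, Nitsche and CIP--jump contributions. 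Each of Lemmas \ref{lm:errF}--\ref{lm:errJ} bounds its contribution, applied to $v_h$, by the quantity appearing in its statement (a power of $h$ times a Sobolev norm of $u$, $f$ or $\bb$ on a patch $\mathcal D(E)$ of $E$) times $\|v_h\|_{\cip,\mathcal D(E)}$; the single exception is the interface integral $\int_{\partial E}(\bb\cdot\nn^E)\,\eint\,v_h\,{\rm d}s$ produced in Lemma \ref{lm:errb}.

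The step that requires care is that this interface integral disappears upon summation over the mesh, so it never enters the final bound. Indeed, both $\eint=u-\uint$ and $v_h$ are single-valued across every interelement edge (the virtual element functions are continuous on the skeleton by construction and $u$ is smooth enough to possess edge traces), while $v_h$ vanishes on $\Gamma$ since $V_h(\Omega_h)\subset H^1_0(\Omega)$; hence the two contributions of an interior edge, which carry opposite outward normals, cancel, and boundary edges contribute nothing. Therefore $\sum_{E\in\Omega_h}\int_{\partial E}(\bb\cdot\nn^E)\,\eint\,v_h\,{\rm d}s=0$.

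For the remaining contributions I would sum the local bounds over $E$, apply Cauchy--Schwarz in $E$, and use the finite overlap of the patches $\mathcal D(E)$ and $\mathcal D(\mathcal D(E))$ (a consequence of \textbf{(A1)}) to conclude $\|e_h\|_\cip^2 \lesssim \sum_{E\in\Omega_h}(\text{squares of the local bounds})$. It then remains to square the coefficients supplied by the lemmas and collect powers of $h$: using $h\lesssim 1$ to absorb $h^{2\reg+2}$ into $h^{2\reg+1}$, the load, convection, reaction and CIP contributions are of size $h^{2\reg+1}$ times local norms of $u$, $f$, $\bb$, whereas the diffusion and Nitsche contributions are of size $\epsilon\,h^{2\reg}$ times local norms of $u$; Lemma \ref{lm:epi} gives $\|\eint\|_\cip^2\lesssim\sum_{E\in\Omega_h}(\epsilon h^{2\reg}+h^{2\reg+1})|u|_{\reg+1,E}^2$, in the same two regimes. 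Squaring $\|u-u_h\|_\cip\leq\|\eint\|_\cip+\|e_h\|_\cip$ and using the finite overlap once more to rewrite the $\mathcal D(E)$--norms as sums of single-element norms gives $\|u-u_h\|_\cip^2\lesssim\sum_{E\in\Omega_h}\Theta^E(\epsilon h^{2\reg}+h^{2\reg+1})$ with $\Theta^E$ a uniform function of $\|u\|_{\reg+2,E}$, $\|f\|_{\reg+1,E}$ and $\|\bb\|_{[W^{\reg+1}_\infty(E)]^2}$, as claimed. The routine part is this squaring/collecting; the genuine obstacle is the convection residual, namely recognizing that the leftover boundary integral telescopes globally to zero and keeping the $\mathcal D(E)$-bookkeeping uniform in $h$.
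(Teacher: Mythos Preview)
Your proposal follows the same route as the paper: invoke Proposition \ref{prp:abstract}, plug in Lemmas \ref{lm:epi}--\ref{lm:errJ}, and deal with the residual interface integral coming from Lemma \ref{lm:errb} by summation over the mesh. The squaring, Cauchy--Schwarz and finite-overlap bookkeeping you describe is exactly what is needed.

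There is one point where your argument deviates from the paper and deserves a flag. You claim the boundary-edge contribution of $\sum_E\int_{\partial E}(\bb\cdot\nn^E)\,\eint\,v_h\,{\rm d}s$ vanishes because $v_h\in V_h(\Omega_h)\subset H^1_0(\Omega)$. The paper, however, imposes the Dirichlet condition weakly via Nitsche's method, and its own proof treats only the \emph{interior} edges as cancelling; the contribution from $\partial\Omega$ is kept and estimated (via trace and interpolation, exactly as for $\eta^E_{N,c}$ in \eqref{eq:boundarycontr}) by $h^{s+1/2}|u|_{s+1,E}\|v_h\|_{\cip,E}$. This discrepancy is due to an apparent inconsistency in the paper itself: the stated definition reads $V_h(\Omega_h)\subset V=H^1_0(\Omega)$, yet the Nitsche terms and the boundary part $\|\xi(\epsilon,\bb)v_h\|_{0,\Gamma_E}$ of the cip-norm would all be trivially zero under that inclusion. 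If one takes the Nitsche setting at face value (so that $v_h$ need not vanish on $\Gamma$), the boundary integral does not telescope away and must be bounded; the fix is straightforward and yields an additional $h^{2s+1}$ contribution, already absorbed in the final estimate.
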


\begin{proof}
It it sufficient to use Proposition \ref{prp:abstract} combined with Lemmas \ref{lm:epi}, \ref{lm:errF}, \ref{lm:erra}, \ref{lm:errb}, \ref{lm:errc}, \ref{lm:errN} \ref{lm:errJ}, noting that 
\[
\sum_{E \in \Omega_h} \int_{\partial E\setminus\partial\Omega} (\bb \cdot \nn^E) \, e_\mathcal{I} \, v_h {\rm d} s = 0 \, ,
\]
and the contributions stemming from $\partial\Omega$ are controlled as in \eqref{eq:boundarycontr}.
\end{proof}

\subsection{A special case: advection-diffusion problem with $\bb \in \Pk_1(\Omega)$}\label{ss:p1beta}

We consider problem \eqref{eq:problem-c} in a particular situation: we assume an advection term $\bb \in \Pk_1(\Omega)$, i.e. globally linear, and we allow the reaction coefficient $\sigma = 0$. We do not make further assumptions on the diffusion coefficient $\epsilon$ and on the load term $f$. Thus, the advection-diffusion problem reads as (cf. \eqref{eq:problem-c})
\begin{equation}\label{eq:problem-c_special}
\left\{
\begin{aligned}
& \text{find $u \in V$ such that:} 
\\
& \epsilon a(u,  v) + \bskew (u, v)  = \tilde{ \mathcal F }(v) \qquad \text{for all $v \in V$.}
\end{aligned}
\right.
\end{equation}
In this case, even without the reaction term, we are able to prove robust estimates for the approximation of problem \eqref{eq:problem-c_special}. 
Using the same approach as before, the discrete version of problem \eqref{eq:problem-c_special} reads as
\begin{equation}
\label{eq:cip-vem_special}
\left \{
\begin{aligned}
& \text{find $u_h \in V_h(\Omega_h)$ such that:} 
\\
& \Acip^{\rm ad}(u_h, \, v_h) = \mathcal{F}_h(v_h) \qquad \text{for all $v_h \in V_h(\Omega_h)$,}
\end{aligned}
\right.
\end{equation}
where 
\begin{equation*}
\label{eq:Acip_Special_def}
\Acip^{\rm ad}(u_h, v_h) \coloneqq \sum_{E \in \Omega_h} \Acip^{{\rm ad},E} (u_h, v_h) \, ,
\end{equation*}
and
\begin{equation*}
\label{eq:Acip_special}
\Acip^{{\rm ad},E} (u_h, v_h) \coloneqq \epsilon a_h^E(u_h , v_h) + \bskewEh(u_h , v_h) + \mathcal{N}_h(u_h, v_h) + J_h^E(u_h , v_h) \, .
\end{equation*}

The key observation is that a suitable inf-sup condition still holds true without the help of the $L^2$-type norm stemming from the reaction term. In fact, introducing the local norm

\begin{equation*} 
\|v_h\|^2_{\cip, {\rm ad}, E} 
:= 
\epsilon \, \| \nabla v_h \|^2_{0,E} + 
h \, \| \bb \cdot \PZ0P \nabla v_h \|^2_{0,E} +  
\Vert \xi (\epsilon, \bb) v_h \Vert^2_{0,\Gamma_E} +
J_h^E(v_h,v_h) \, ,
\end{equation*}
and its global counterpart
\[
\|v_h\|^2_{\cip, E}  \coloneqq \sum_{E \in \Omega_h} \|v_h\|^2_{\cip, {\rm ad}, E} \, ,
\]
similarly to Proposition~\ref{prp:inf-sup}, we have the following result.
\begin{proposition}\label{prp:inf-sup-ad}
Under assumptions \textbf{(A1)}, it holds that
\begin{equation}\label{eq:inf-sup-ad_case}
\| v_h \|_{\cip,{\rm ad}} 
\lesssim 
\sup_{z_h \in V_h(\Omega_h)} \dfrac{\Acip^{\rm ad} (v_h, z_h)}{\| z_h \|_{\cip , {\rm ad}}}
\qquad \text{for all $v_h \in V_h(\Omega_h)$,}    
\end{equation}
for a constant that does not depend on $h$ and $\epsilon$.
\end{proposition}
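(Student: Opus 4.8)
The plan is to mirror the proof of Proposition~\ref{prp:inf-sup}, verifying that dropping the reaction bilinear form costs nothing: the reaction only added the \emph{nonnegative} term $\sigma\|v_h\|_0^2$ to the quadratic form, together with the matching $L^2(E)$-control in the norm, and — crucially, because $\bb\in\Pk_1(\Omega)$ — none of the lower bounds we shall need ever involves $\|v_h\|_{0,E}$. I would first record the analogue of Lemma~\ref{lm:symm-part},
\[
\Acip^{\rm ad}(v_h,v_h)\gtrsim \epsilon\,\|\nabla v_h\|_0^2+J_h(v_h,v_h)+\|\xi(\epsilon,\bb)v_h\|_{0,\Gamma}^2,
\]
whose proof is literally that of Lemma~\ref{lm:symm-part} after deleting the $\sigma c_h^E$ term (nothing there used it), and then, exactly as in Proposition~\ref{prp:inf-sup}, split into the regimes $\epsilon\ge h$ and $\epsilon<h$.

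The case $\epsilon\ge h$ is immediate: since $\PZ0P$ is an $L^2$-projection and $\bbO=1$, one has $h\,\|\bb\cdot\PZ0P\nabla v_h\|_{0,E}^2\lesssim h\,\|\nabla v_h\|_{0,E}^2\le\epsilon\,\|\nabla v_h\|_{0,E}^2$, so the convective contribution to $\|\cdot\|_{\cip,{\rm ad}}$ is already dominated by the diffusive one, and \eqref{eq:inf-sup-ad_case} follows from the symmetric-part bound with $z_h=v_h$.

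For $\epsilon<h$ I would prove the analogue of Lemma~\ref{lm:adv-term}: with $w_h\coloneqq\pwh$ — and noting that here $\bb_h=\bb$, since $\bb$ is globally affine and hence already belongs to $\Pk_1(\Omega_h)$ — one has $\Acip^{\rm ad}(v_h,w_h)\ge C_1\,h\,\|\bb\cdot\nabla\Pg v_h\|_{0,\Omega}^2-C_2\,\Acip^{\rm ad}(v_h,v_h)$. I would recycle the term-by-term analysis $T_1,\dots,T_5$ of Lemma~\ref{lm:adv-term} with the simplifications produced by $\bb_h=\bb$: the term $T_3$ is absent ($\sigma=0$); in the splitting \eqref{eq:b_split} one gets $\eta_{\bb_1}=h\,\|\bb\cdot\nabla\Pg v_h\|_{0,E}^2$ \emph{exactly} and $\eta_{\bb_3}=0$, since those corrections were precisely the $(\bb-\bb_h)$ contributions; and since $\bb\cdot\nabla\Pg v_h$ is itself an element of $\Pk_k(\Omega_h)$, Proposition~\ref{prp:clmest} applies to it directly, so that in $\eta_{\bb_2}$ the quasi-interpolation jump is absorbed into $J_h^{\mathcal{D}(E)}(v_h,v_h)$ \emph{without} producing any $\|v_h\|_{0,E}$ term (the chain \eqref{eq:xi-eq}--\eqref{eq:xi-eq-2} collapses, only its final bound being used). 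The remaining pieces — $T_1$, $T_2$, $T_4$ and the boundary parts of $T_5$, cf. \eqref{eq:ah_infsup}, \eqref{eq:J_estimate}, \eqref{eq:Nestimate}, \eqref{eq:bhinfsup4}, \eqref{eq:boundary-est} — are bounded below, verbatim as in Lemma~\ref{lm:adv-term}, by $-C\sum_{E\in\Omega_h}(\epsilon^{1/2}\|\nabla v_h\|_{0,E}+J_h^E(v_h,v_h)^{1/2}+\|\xi(\epsilon,\bb)v_h\|_{0,\Gamma_E})\,h^{1/2}\|\bb\cdot\nabla\Pg v_h\|_{0,E}-C\,J_h(v_h,v_h)$, which only sees quantities controlled by $\Acip^{\rm ad}(v_h,v_h)$; Young's inequality and the symmetric-part bound then close this estimate.

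Finally, to obtain \eqref{eq:inf-sup-ad_case} for $\epsilon<h$ I would take $z_h=w_h+\kappa v_h$ with $\kappa$ large, as in Proposition~\ref{prp:inf-sup}, and verify $\|w_h\|_{\cip,{\rm ad}}\lesssim\|v_h\|_{\cip,{\rm ad}}$ by the computations \eqref{eq:Jhinfsup}, \eqref{eq:cont-final}, \eqref{eq:cont-final2} — again shorter, since with $\bb_h=\bb$ the inverse-estimate step \eqref{falcao} is trivial and no $|\bb|_{W^{1,\infty}}\|v_h\|_{0,E}$ term appears. The only genuinely delicate point, and the precise reason the hypothesis $\bb\in\Pk_1(\Omega)$ is required, is that without the reaction term \emph{every} lower bound for $\Acip^{\rm ad}(v_h,w_h)$ must avoid the $L^2(E)$-norm of $v_h$; this is exactly what $\bb_h=\bb$ guarantees, so I do not expect a genuine obstacle, only the bookkeeping of checking it throughout.
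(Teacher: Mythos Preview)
Your proposal is correct and follows essentially the same approach as the paper's own proof. In particular, the paper singles out exactly the same three terms $\eta_{\bb_1}$, $\eta_{\bb_2}$, $\eta_{\bb_3}$ as the only places where the $L^2$-norm of $v_h$ was needed, and disposes of them via $\bb_h=\bb$ precisely as you describe (exact advective norm for $\eta_{\bb_1}$, $\eta_{\bb_3}=0$, and direct use of Proposition~\ref{prp:clmest} on $\bb\cdot\nabla\Pg v_h\in\Pk_k(\Omega_h)$ for $\eta_{\bb_2}$).
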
 

\begin{proof}
The proof of \eqref{eq:inf-sup-ad_case} is analogous to the one of Proposition~\ref{prp:inf-sup}, with the simplification that in Lemma \ref{lm:adv-term} it holds $\bb_h=\bb$. The main difference stands in the treatment of $\eta_{\bb_1}$, $\eta_{\bb_2}$ and $\eta_{\bb_3}$ in $\mathbf{(T_5)}$, see \eqref{eq:b_split}. These terms are the only ones requiring the help of the $L^2$ norm in the general case (apart, of course, the reaction term itself). 
Regarding the term $\eta_{\bb_1}$, in our present case we immediately have the advective norm:
\begin{equation}\label{eq:bhinfsup1-spec}
\eta_{\bb_1} =
h \, \| \bb \cdot \nabla \P0 v_h \|^2_{0,E}
\end{equation}
Since $\bb\in\Pk_1(\Omega)$, it follows that $\bb \cdot \nabla \P0 v_h\in\Pk_k(E)$, so that we can directly bound $\eta_{\bb_2}$ using Young's inequality and Proposition \ref{prp:clmest}:
\begin{equation} \label{eq:b-bh-spec}
\begin{aligned}
 \eta_{\bb_2} & = h \bigl( \bb \cdot \nabla \P0 v_h, ( \pi - I ) (\bb \cdot \nabla  \P0 v_h) \bigr)_{0,E}\\
&\geq   - \dfrac{h}{2} \| \bb\cdot \nabla \P0 v_h \|^2_{0,E} -
\dfrac{h}{2} \| (\pi - I) (\bb \cdot \nabla \P0 v_h) \|^2_{0,E} \\
& \geq  - \dfrac{h}{2} \| \bb\cdot \nabla \P0 v_h \|^2_{0,E} - C\, J_h^{\mathcal{D}(E)}(v_w,v_h)  \, , 
\end{aligned}
\end{equation}
which is the counterpart of \eqref{eq:bhinfsup2}.
Furthermore, again since $\bb \cdot \nabla \P0 v_h\in\Pk_k(E)$, it follows that 
$$
\eta_{\bb_3} := \bigl(\bb\cdot \nabla\P0 v_h, \, (\P0 - I) w_h\bigr)_{0,E} = 0\, .
$$
\end{proof}
Once the above stability result has been established, the next error estimate can be proved using the same arguments of Proposition \ref{prp:bfb}. 
The only difference is handling the terms $\eta_{\mathcal{F}}^E$ and $\eta_{b,1}^E$ which now must be bounded using diffusion (since reaction is not available) and therefore paying a price in terms of $\varepsilon$ but with a better rate in terms of $h$. For the sake of conciseness we here omit the simple alternative derivations for such terms.
\begin{proposition}
\label{prp:bfb-2}
Under the assumptions \textbf{(A1)} and \textbf{(A2)}, let $u \in V$ be the
solution of equation \eqref{eq:problem-c_special} and $u_h \in V_h(\Omega_h)$ be the solution of equation \eqref{eq:cip-vem_special}. 
Then it holds that
\[
\begin{aligned}
\|u - u_h\|^2_{\cip,{\rm ad}} &\lesssim
\sum_{E \in \Omega_h} \Theta^E
\left(
\epsilon \, h^{2\reg} 
+   
h^{2\reg+1}
+
\dfrac{h^{2(\reg + 2)}}{\epsilon}
\right) \, ,
\end{aligned}
\]
where the constant $\Theta^E$ depends on
$\|u\|_{s+1,E}$, $\|f\|_{s+1,E}$, $\frac{\|\beta\|_{[W^{s+1}_{\infty}(E)]^2}}{\beta_E}$.
\end{proposition}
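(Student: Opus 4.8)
The plan is to mirror the structure of Proposition~\ref{prp:bfb} exactly, replacing the role of the reaction norm $\sigma\|v_h\|_{0,E}^2$ by the diffusion norm $\epsilon\|\nabla v_h\|_{0,E}^2$ in the two places where the reaction norm was genuinely used. First I would invoke Proposition~\ref{prp:abstract} (whose proof used only the inf-sup condition and the consistency identity \eqref{eq:consistency-1}, both of which are available here thanks to Proposition~\ref{prp:inf-sup-ad} and the obvious analogue of the consistency result with $\sigma=0$), obtaining
\[
\|u-u_h\|_{\cip,{\rm ad}}
\lesssim
\|\eint\|_{\cip,{\rm ad}}
+\sum_{E\in\Omega_h}\bigl(\errF+\erra+\errb+\errJ+\errN\bigr),
\]
where now the dual norm is taken with respect to $\|\cdot\|_{\cip,{\rm ad}}$ and the $\errc$ term is absent. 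The terms $\|\eint\|_{\cip,{\rm ad}}$, $\erra$, $\errJ$ and $\errN$ are bounded verbatim as in Lemmas~\ref{lm:epi}, \ref{lm:erra}, \ref{lm:errJ}, \ref{lm:errN}, since none of those proofs used the reaction norm (they used only $\epsilon\|\nabla\cdot\|_0^2$, the Nitsche terms and $J_h$). This yields the contributions $\epsilon h^{2s}$ and $h^{2s+1}$ already present in the statement.

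The real work is re-examining the two places where $\sigma\|v_h\|_{0,E}^2$ was exploited, namely the bound for $\errF$ (Lemma~\ref{lm:errF}) and for $\eta_{b,1}^E$ inside $\errb$ (Lemma~\ref{lm:errb}). In Lemma~\ref{lm:errF} the final step estimated $\|(I-\P0)v_h\|_{0,E}\lesssim\|v_h\|_{0,E}\le\|v_h\|_{\cip,E}$ using the reaction norm; without it one must instead write, for any $v_h\in\mathrm{Ker}(\P0)$, a Poincar\'e inequality $\|(I-\P0)v_h\|_{0,E}\lesssim h_E|(I-\P0)v_h|_{1,E}\lesssim h_E\epsilon^{-1/2}\,\epsilon^{1/2}\|\nabla v_h\|_{0,E}\lesssim h_E\epsilon^{-1/2}\|v_h\|_{\cip,{\rm ad},E}$, so that
\[
\errF\lesssim\|(I-\P0)f\|_{0,E}\;h_E\,\epsilon^{-1/2}\;\lesssim\;\epsilon^{-1/2}h^{s+2}|f|_{s+1,E},
\]
which is exactly the extra term $h^{2(s+2)}/\epsilon$ appearing in the conclusion (after squaring and using \eqref{eq:beta-scaling}). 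Identically, for $\eta_{b,1}^E=((I-\P0)(\bb\cdot\nabla u),(I-\P0)v_h)_{0,E}$ one uses the same Poincar\'e/diffusion bound on $\|(I-\P0)v_h\|_{0,E}$ together with $\|(I-\P0)(\bb\cdot\nabla u)\|_{0,E}\lesssim h^{s+1}\|u\|_{s+1,E}\|\bb\|_{[W^{s+1}_\infty(E)]^2}$ (here the linearity of $\bb$ could even be used, but it is not needed), giving $\eta_{b,1}^E\lesssim\epsilon^{-1/2}h^{s+2}\|u\|_{s+1,E}\|v_h\|_{\cip,{\rm ad},E}$, again of the order $h^{2(s+2)}/\epsilon$.

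For $\eta_{b,2}^E$, which was bounded by $\|\P0 u_\mathcal{I}-u\|_{0,E}\,\|\bb\cdot\nabla\P0 v_h\|_{0,E}\lesssim h^{s+1/2}\|u\|_{s+1}\|v_h\|_{\cip,E}$ using the convective part $h\|\bb\cdot\nabla\P0 v_h\|_{0,E}^2$ of the norm --- which is still present in $\|\cdot\|_{\cip,{\rm ad}}$ --- nothing changes. The boundary terms $\eta_{b,3}^E+\eta_{b,4}^E$ and the inter-element cancellation $\sum_E\int_{\partial E\setminus\partial\Omega}(\bb\cdot\nn^E)e_\mathcal{I}v_h=0$ are handled exactly as in Lemma~\ref{lm:errb} and Proposition~\ref{prp:bfb}. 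Collecting everything, squaring each estimate and summing over $E$ (using, as before, that each polygon is counted a uniformly bounded number of times) gives the stated bound with $\Theta^E$ depending on $\|u\|_{s+1,E}$, $\|f\|_{s+1,E}$ and $\|\bb\|_{[W^{s+1}_\infty(E)]^2}/\beta_E$ (the factor $\beta_E$ reappearing because, having dropped the scaling-$1$ assumption momentarily, the $\gamma$ in $J_h^E$ carries a $\beta_E$). The main obstacle is simply making sure that \emph{every} use of the $L^2$-norm of $v_h$ in the reaction-based proof is either (i) genuinely replaceable by the convective norm $h\|\bb\cdot\nabla\P0 v_h\|_{0,E}^2$, which does not cost an $\epsilon^{-1}$, or (ii) only needed on $\mathrm{Ker}(\P0)$-type quantities, where the Poincar\'e inequality converts it to the diffusion norm at the price of $h_E\epsilon^{-1/2}$; one must check that no term of the form $\|v_h\|_{0,E}$ survives outside these two situations, and in particular that the interpolation and consistency lemmas really never used $\sigma>0$.
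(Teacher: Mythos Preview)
Your proposal is correct and follows essentially the same approach as the paper. The paper's own proof is barely more than a remark: it states that the argument is identical to Proposition~\ref{prp:bfb} except that the two terms $\eta_{\mathcal F}^E$ and $\eta_{b,1}^E$ must now be bounded via the diffusion norm (at the cost of a factor $\epsilon^{-1/2}$ but gaining one power of $h$), and omits the details --- you have correctly identified exactly these two terms and supplied the missing Poincar\'e/diffusion argument that produces the extra $h^{2(s+2)}/\epsilon$ contribution.
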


\section{Numerical Experiment}
\label{sec:num}

In this section, we investigate the actual computational behavior of the proposed method.

\paragraph{Model problem.}
We consider a family of problems in the unit square $\Omega=(0,\,1)^2$. We choose the boundary conditions and the source term (which turns out to depend on $\epsilon$, $\sigma$ and $\bb$) in such a way that the analytical solution is always the function
$$
u(x,\,y) := \sin(\pi\,x)\sin(\pi\,y)\,.
$$
Different choices of the parameters $\sigma$, $\epsilon$ and of the advective term $\bb(x,y)$ will be selected. Since the pointwise values of the numerical solution $u_h$ are unknown, the following error quantities will be considered:

\begin{itemize}
 \item \textbf{$H^1-$seminorm error}
 $$
 e_{H^1} := \sqrt{\sum_{E\in\mathcal{T}_h}\left\|\nabla(u-\Pi_k^\nabla u_h)\right\|^2_{0,E}}\,;
 $$
 
 \item \textbf{$L^2-$norm error}
 $$
 e_{L^2} := \sqrt{\sum_{E\in\mathcal{T}_h}\left\|(u-\P0 u_h)\right\|^2_{0,E}}\,.
 $$
\end{itemize}
We will consider two different families of mesh:
\begin{itemize}
 \item \texttt{quad:} a mesh composed by highly distorted quadrilaterals obtained perturbing a mesh composed of structured squares;
 \item \texttt{voro:} a centroidal Voronoi tessellation of the unit square.
\end{itemize}
These two families are represented in Figure~\ref{fig:meshes}.

\begin{figure}[!htb]
\begin{center}
\begin{tabular}{ccc}
\texttt{quad} & &\texttt{poly} \\
\includegraphics[width=0.35\textwidth]{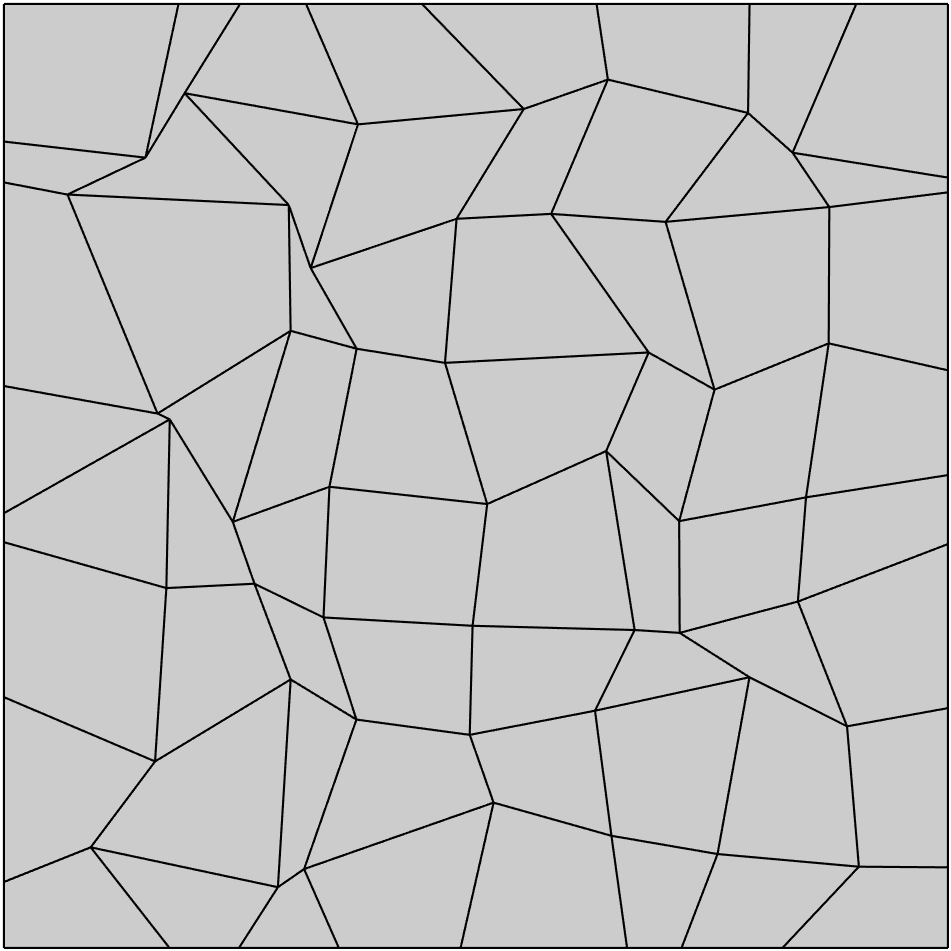} &\phantom{mm}&
\includegraphics[width=0.35\textwidth]{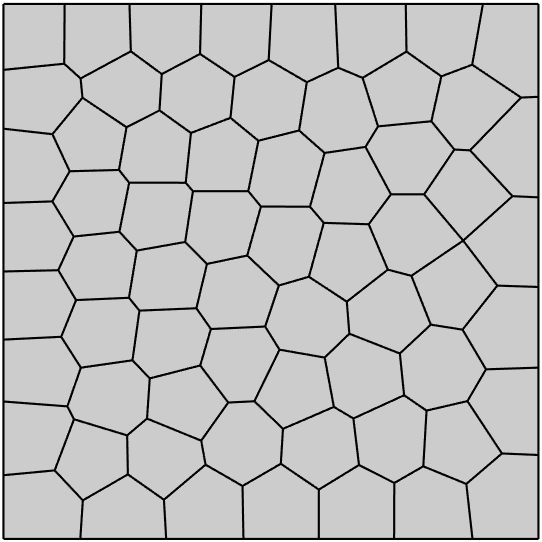} \\
\end{tabular}
\end{center}
\caption{Example of meshes used for the present test case.}
\label{fig:meshes}
\end{figure}

\paragraph{Effects of the CIP stabilization.} The first aspect we investigate is the benefits of inserting the CIP term in the variational formulation of the problem. We thus consider an advection-dominated regime and choose the parameters $\epsilon = 1e-9$, $\sigma = 0$, along with a constant advection term 
$$
\bb(x,\,y) := \left[\begin{array}{r}
1\\
0.5
\end{array}\right]\, .
$$
We consider a centroidal Voronoi tesselation of the domain $\Omega$ into 256 polygons.
The degree of the method is set to $k=1$.

In Figure~\ref{fig:benefits} we observe that by inserting the bilinear form $J_h(\cdot,\cdot)$ in the variational formulation, we are able to accurately approximate the analytic solution $u(x,y)$ of the model problem. 
If we omit the CIP term, we obtain (as expected) a definitely unsatisfactory numerical solution, which exhibits nonphysical oscillations all over the computational domain. We also remark that these instabilities reach peaks of the order of $10^2$, despite for the analytic solution we have $\| u \|_{L^\infty(\Omega)}=1$.

\begin{figure}[!htb]
\begin{center}
\begin{tabular}{ccc}
\texttt{No CIP} & &\texttt{CIP} \\
\includegraphics[width=0.43\textwidth]{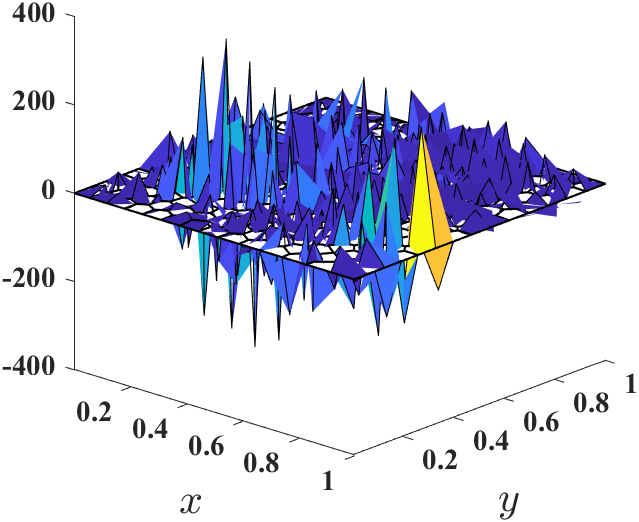} &\phantom{mm}&
\includegraphics[width=0.43\textwidth]{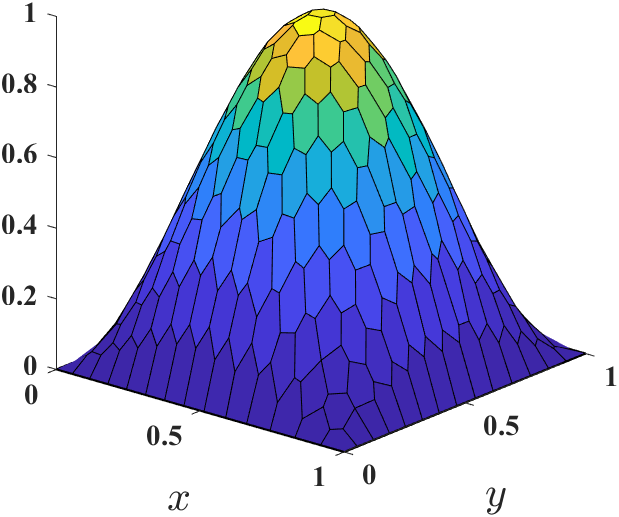} \\
\end{tabular}
\end{center}
\caption{Effects of the CIP stabilaizing term.}
\label{fig:benefits}
\end{figure}

\paragraph{Convergence analysis} We now investigate the convergence of the numerical method by means of the above-introduced norms, and choosing different consistency order, i.e. $k=1,2,3$.
The convective term is
$$
\bb(x,\,y) := \left[\begin{array}{r}
1\\
0.5
\end{array}\right]\, .
$$
We consider a diffusion-dominated case ($\epsilon = 1$), and an advection-dominated one ($\epsilon = 1e-9$). 
Thus, we are in the framework of Section \ref{ss:p1beta}. Accordingly, we neglet the reaction term (hence, $\sigma = 0$) and the theoretical error bound of Proposition \ref{prp:bfb-2} holds.
We compare the method with and without the jump term $J_h(\cdot,\cdot)$. The results are obtained using the Voronoi mesh family.

\begin{figure}[!htb]
\begin{center}
$\epsilon = 1$
\begin{tabular}{ccc}
\includegraphics[width=0.43\textwidth]{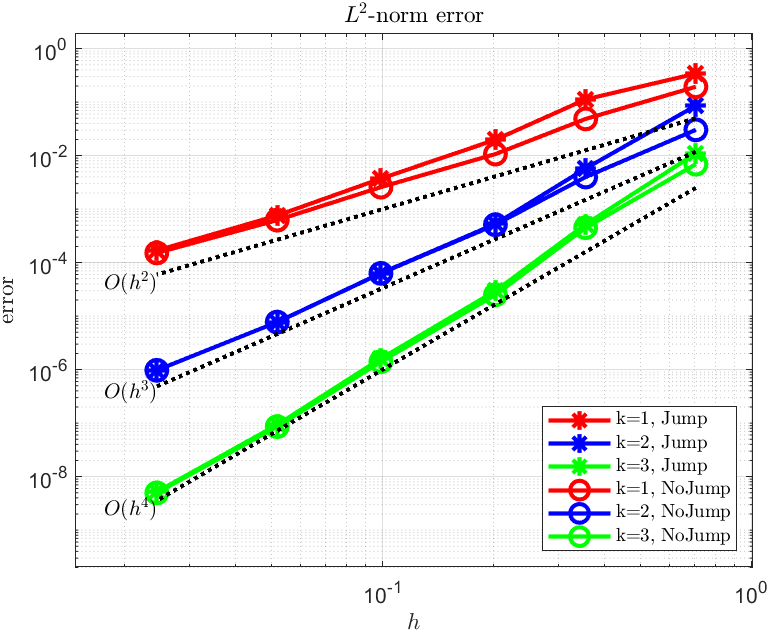} &\phantom{mm}&
\includegraphics[width=0.43\textwidth]{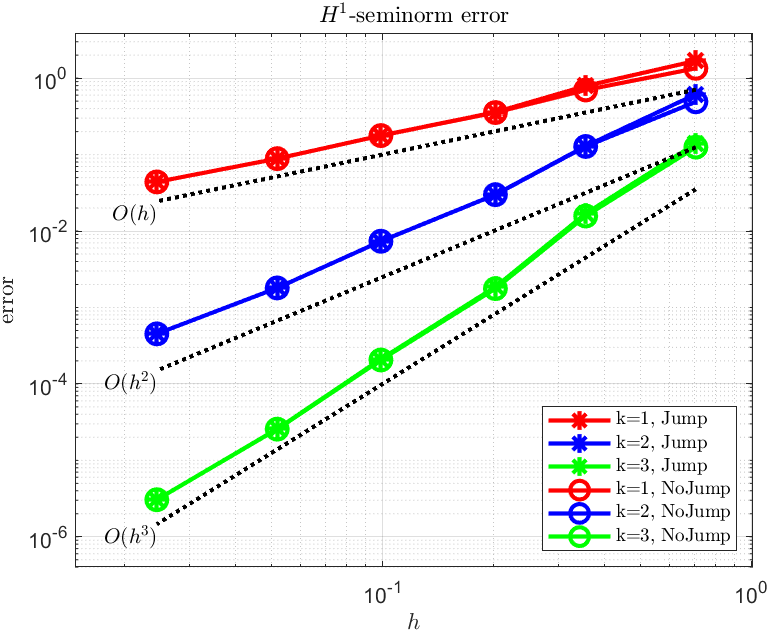} 
\end{tabular}
$\epsilon = 10^{-9}$
\begin{tabular}{ccc}
\includegraphics[width=0.43\textwidth]{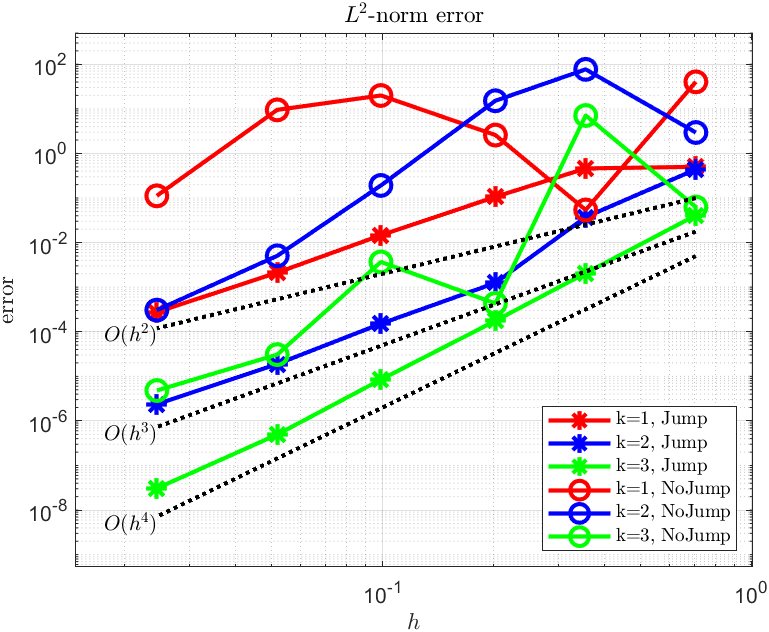} &\phantom{mm}&
\includegraphics[width=0.43\textwidth]{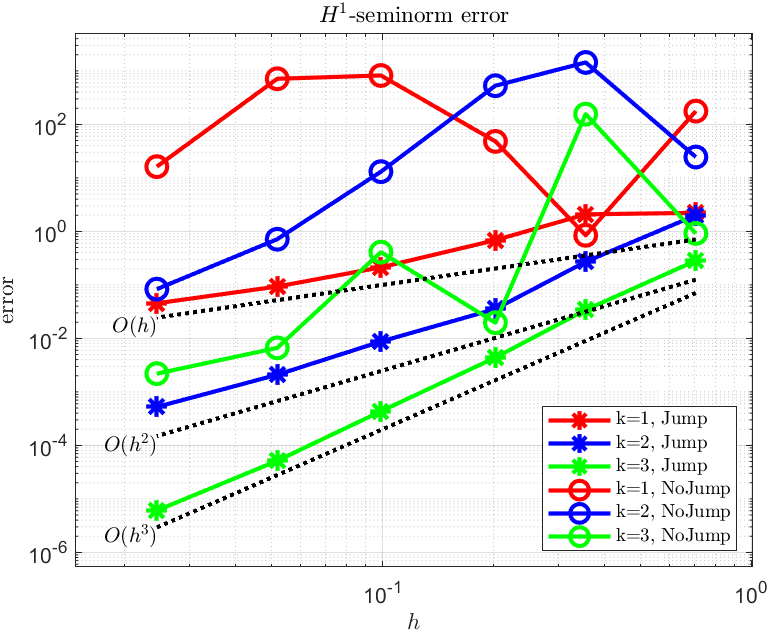} \\
\end{tabular}
\end{center}
\caption{Convergences in the case $\epsilon = 1$ and $\epsilon = 10^{-9}$.}
\label{fig:convergence}
\end{figure}

In Figure~\ref{fig:convergence}, we observe that in the case $\epsilon = 1$ the two methods behave in the same way. Instead, in the advection-dominated regime we observe that the optimal convergences are attained when inserting the stabilising jump term; without it, as expected, the method displays unsatisfactory results, especially for the low-order case.

\paragraph{Effect of the reaction term.} We now consider an advection-dominated problem with a  variable advection term not in $\Pk_1(\Omega)$.  
In particular, we select
$$
\bb(x,\,y) := \left[\begin{array}{r}
-2\,\pi\,\sin(\pi\,(x+2\,y))\\
\pi\,\sin(\pi\,(x+2\,y))
\end{array}\right]\,.
$$
We recall that for this case we are able to prove robust error bounds only with the aid of the reaction term, see Proposition \ref{prp:bfb}.
The diffusive coefficient is set to $\epsilon = 1e-9$. We consider two different families of mesh. The first one is made by the usual Voronoi polygons and the second one is composed of distorted squares.  
We select two different values for the reaction term: $\sigma = 1$ and $\sigma = 0$. 
\begin{figure}[!htb]
\begin{center}
$\sigma = 1$
\begin{tabular}{ccc}
\includegraphics[width=0.43\textwidth]{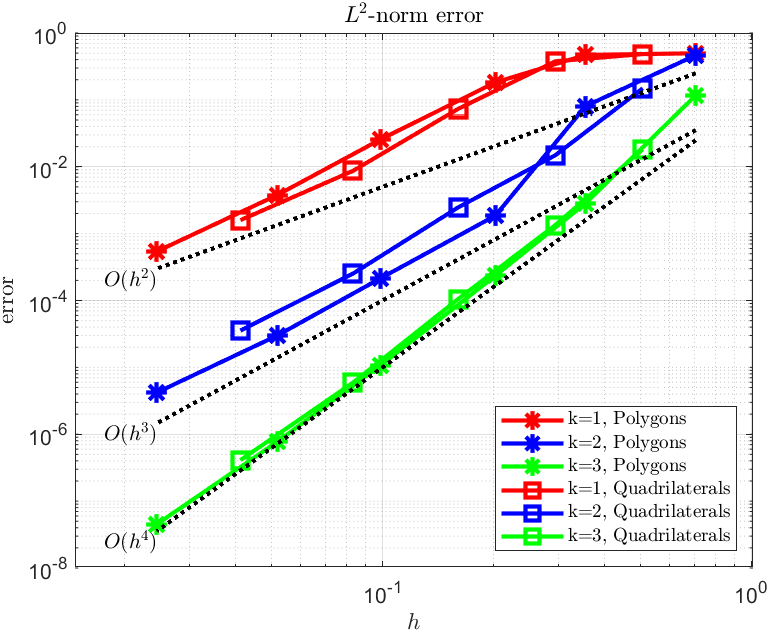} &\phantom{mm}&
\includegraphics[width=0.43\textwidth]{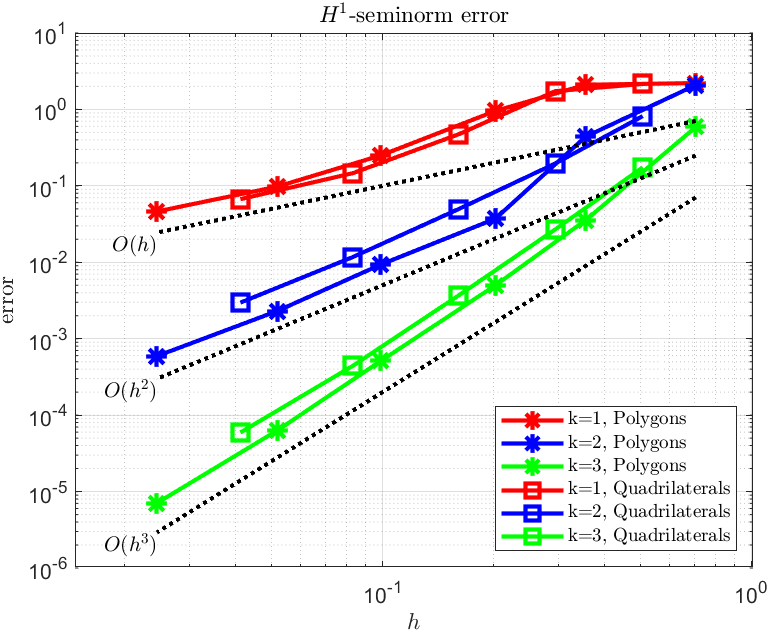} 
\end{tabular}
$\sigma = 0$
\begin{tabular}{ccc}
\includegraphics[width=0.43\textwidth]{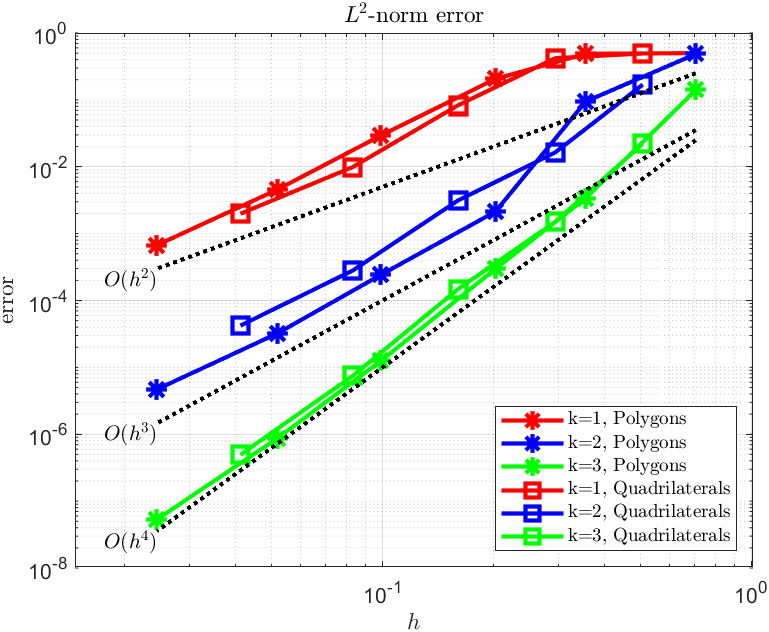} &\phantom{mm}&
\includegraphics[width=0.43\textwidth]{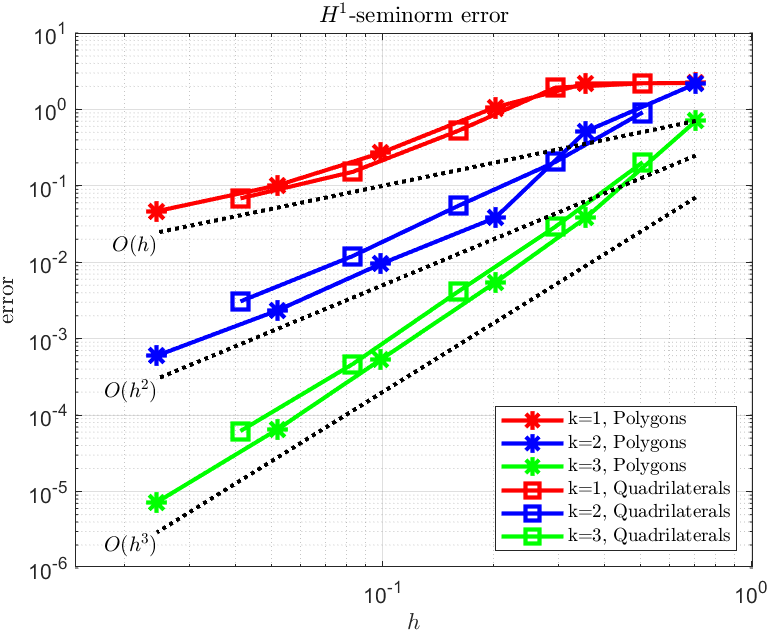} \\
\end{tabular}
\end{center}
\caption{Convergences in the case $\sigma = 0$ and $\sigma = 1$.}
\label{fig:simga}
\end{figure}

Figure~\ref{fig:simga} shows that there is no significant difference between the cases $\sigma=1$ and $\sigma =0$. As already mentioned, for this latter case Proposition \ref{prp:bfb} does not apply, and no satisfactory theoretical analysis is available, yet. However, the numerical outcomes seems to suggest that it could be possible to drop the reaction term even if the advection term is not globally linear. We note also that we achieve a good convergence also in the case that the mesh is composed of unstructured quadrilaterals.

\addcontentsline{toc}{section}{\refname}
\bibliographystyle{plain}
\bibliography{biblio}
\end{document}